\renewcommand{\H}{\mathbb{H}}
\newcommand{\CH}{\mathcal{H}}
\newcommand{\sgn}{\mbox{sgn}}
\newcommand{\SL}{\mathrm{SL}}
\newcommand{\N}{\mathbb N}
\newcommand{\C}{\mathbb C}
\newcommand{\Q}{\mathbb Q}
\DeclareMathAlphabet{\mathdutchcal}{U}{dutchcal}{m}{n}
\SetMathAlphabet{\mathdutchcal}{bold}{U}{dutchcal}{b}{n}
\DeclareMathAlphabet{\mathdutchbcal}{U}{dutchcal}{b}{n}
\theoremstyle{plain}
\newtheorem{thm}{Theorem}[section]
\newtheorem{cor}[thm]{Corollary}
\newtheorem{lem}[thm]{Lemma}
\theoremstyle{definition}
\numberwithin{equation}{section}
\renewcommand{\sgn}{\textnormal{sgn}}
\def\d{\delta}
\def\n{\nu}
\def\d{\delta}
\def\n{\nu}
\def\GG{\Gamma}
\def\GG{\Gamma}
\newcommand{\re}{{\rm Re}}
\renewcommand{\sgn}{{\rm sgn}}
\newcommand{\R}{\mathbb R}
\newcommand{\Z}{\mathbb Z}
\newcommand{\half}{\frac{1}{2}}
\setlist[itemize]{noitemsep, topsep=0pt}
\newcommand{\vast}{\bBigg@{2}}
\newcommand{\Vast}{\bBigg@{5}}
\newcommand{\ord}{\operatorname{ord}}
\newcommand{\lcm}{\operatorname{lcm}}
\renewcommand{\pmod}[1]{\allowbreak\mkern 3mu({\operator@font mod}\mkern 6mu #1)}
\title[Holomorphic eta-quotients with regular sign changes]{Sign changes of Fourier coefficients for holomorphic eta-quotients}
\author{Kathrin Bringmann}
\address{University of Cologne, Department of Mathematics and Computer Science, Weyertal 86-90, 50931 Cologne, Germany}
\email{kbringma@math.uni-koeln.de}
\author{Guoniu Han}
\address{I.R.M.A., UMR 7501, Universit\'e de Strasbourg et CNRS, 7 rue Ren\'e Descartes, F-67084 Strasbourg, France}
\email{guoniu.han@unistra.fr}
\author{Bernhard Heim}
\address{University of Cologne, Department of Mathematics and Computer Science, Weyertal 86-90, 50931 Cologne, Germany}
\email{bheim@uni-koeln.de}
\author{Ben Kane}
\address{The University of Hong Kong, Department of Mathematics, Pokfulam, Hong Kong}
\email{bkane@hku.hk}
\date{\today}
\subjclass[2020]{11F03,11F06,11F11,11F12,11F20,11F30,11F37}
\keywords{eta-quotients, modular forms, sign changes of Fourier coefficients\rm}
\begin{document}
\begin{abstract}
	In this paper we study sign changes of an infinite class of $\eta$-quotients that are holomorphic modular forms. We also establish a connection with Hurwitz class numbers.
\end{abstract}
\maketitle

\section{Introduction and statement of results}

There is wide interest in sign changes of the coefficients of $q$-series $f(q) = \sum_{n\in\Z} c(n) q^n$ with $c(n)\in\R$. If $f(q)$ is the Fourier expansion of a cusp form of positive real weight on some congruence subgroup, then Knopp, Kohnen, and Pribitkin \cite[Theorem 1]{KnoppKohnenPribitkin} showed that the $c(n)$ change signs infinitely often. Sign changes along special subsets of $\N$ for half-integral weight cusp forms were considered in \cite{BruinierKohnen,KLW}. Moreover, Kowalski, Lau, Soundararajan, and Wu \cite[Corollary 2]{KLSW10} proved that if $f$ is an integral weight normalized newform, then the signs $\sgn(c(p))$, as $p$ ranges over the primes, uniquely determine $f$. These results do not extend to general holomorphic modular forms. For example, the Eisenstein series for $\SL_2(\Z)$ have at most one sign change, and half of them have no sign change, so the signs of their Fourier coefficients do not uniquely determine them. Define the {\it Dedekind $\eta$-function}
	\begin{equation*}
		\eta(z) := q^{\frac{1}{24}} \prod_{n\geq1}\left(1-q^{n}\right) \qquad \left(q:=e^{2\pi i z}\right).
	\end{equation*}
In this paper, we investigate sign changes of Fourier coefficients of $\eta$-quotients $\prod_{j=1}^{m}\eta(jz)^{\delta_j}$ with $m\in\N$ and $\delta_j\in\Z$ for $1\leq j\leq m$.
Many such quotients are connected to combinatorial counting problems via the
corresponding products
\[
	\prod_{j=1}^{m} \left(q^j;q^j\right)_{\infty}^{\delta_j}=:\sum_{n\geq0}C_{1^{\delta_1} 2^{\delta_2}\cdots m^{\delta_m}}(n)q^n,
\]
where $(a;q)_n:=\prod_{j=0}^{n-1}(1-aq^j)$ for $n\in\N_0\cup\{\infty\}$ is the \begin{it}$q$-Pochhammer symbol\end{it}.
For example, Euler showed that $\frac{1}{\eta(z)}$ is, up to a simple factor, the partition generating function.

Numerous interesting examples involving sign changes of $C_{1^{\delta_1} 2^{\delta_2}\cdots m^{\delta_m}}(n)$ appear in the literature. For example, Andrews and Lewis \cite[Conjecture 2]{AndrewsLewis} made a conjecture pertaining to the so-called crank of a partition that is closely related to $\sgn(C_{1^23^{-1}}(n))$. This conjecture was proved by Kane \cite[Corollary 2]{KaneResolution}. Motivated by work of Borwein \cite{Borwein}, Andrews \cite[Theorem 2.1]{AndrewsBorwein} showed that $\sgn(C_{1^1p^{-1}}(n))$ ($p$ prime) is periodic  with period $p$.  Schlosser and Zhou \cite{Z} further investigated $\sgn(C_{1^{\delta} p^{-\delta}}(n))$ for various choices of $\delta$. In all of these examples, $\sgn(C_{1^{\delta} p^{-\delta}}(n))$ exhibit a regular pattern, in contrast to the case of newforms, where the signs of the Fourier coefficients uniquely determine the form. In this paper, we are interested in other cases for which the signs of $C_{1^{\delta_1}2^{\delta_2}\cdots m^{\delta_m}}(n)$ satisfy a regular pattern. We focus on holomorphic $\eta$-quotients, investigating similar questions for weakly holomorphic modular forms in a parallel paper \cite{BGHK-WeakSigns}. In this paper, we study several types of regular sign patterns. We first present examples for which $\sgn \left( C_{1^{\delta_1}2^{\delta_2}\cdots m^{\delta_m}}(n)\right)$ has some period $M\in\N$.

We start with $M=1$. A number of such examples with fixed sign arise from combinatorial counting problems. For instance, Chen and Garvan \cite[(1.9) and Theorem 2.1]{CG} investigated $\frac{\eta(2z)^{2}\eta(3z)^{3}}{\eta(z)^{2}}$, showing that for $n\in\N_0$ we have
\begin{equation*}
	C_{1^{-2}2^{2}3^{3}}(n)=\frac{1}{24}r_3(24n+11)>0,
\end{equation*}
where $r_3(n)$ is the number of representations of $n$ as a sum of $3$ squares. We obtain a similar result for $\frac{\eta(2z)^{3}\eta(4z)^{2}}{\eta(z)^{2}}$.
\begin{thm}\label{thm:M=1}For $n\in\N_0$, we have
\[
C_{1^{-2}2^{3}4^{2}}(n)>0.
\]
\end{thm}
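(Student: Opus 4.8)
The plan is to recognize this $\eta$-quotient as the generating function for representations by a ternary sum of triangular numbers, and then to deduce strict positivity from the classical three-squares theorem — which, in the end, is what produces the relation to Hurwitz class numbers mentioned in the abstract.

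\textbf{Step 1: a $q$-series identity.} For $(\delta_1,\delta_2,\delta_3,\delta_4)=(-2,3,0,2)$ the defining product is $\prod_{j=1}^{4}(q^j;q^j)_\infty^{\delta_j}=\frac{(q^2;q^2)_\infty^{3}(q^4;q^4)_\infty^{2}}{(q;q)_\infty^{2}}$. Writing $\psi(q):=\sum_{k\ge 0}q^{k(k+1)/2}$ for Gauss's triangular theta function, Euler's identity $(q;q)_\infty=(q;q^2)_\infty(q^2;q^2)_\infty$ yields $\psi(q)=\frac{(q^2;q^2)_\infty^{2}}{(q;q)_\infty}$, and hence
\[
\sum_{n\ge 0}C_{1^{-2}2^{3}4^{2}}(n)\,q^{n}=\psi(q)^{2}\,\psi\!\big(q^{2}\big).
\]
Expanding the right-hand side with $T_k:=\tfrac{k(k+1)}{2}$ gives $C_{1^{-2}2^{3}4^{2}}(n)=\#\{(a,b,c)\in\Z_{\ge 0}^{3}:T_a+T_b+2T_c=n\}$, so $C_{1^{-2}2^{3}4^{2}}(n)\ge 0$ is immediate and only the value $0$ must be excluded.

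\textbf{Step 2: strict positivity.} Multiplying $T_a+T_b+2T_c=n$ by $8$ and adding $4$ turns it into $(2a+1)^2+(2b+1)^2+2(2c+1)^2=8n+4$, so $C_{1^{-2}2^{3}4^{2}}(n)>0$ exactly when $8n+4$ is represented by $x^2+y^2+2z^2$ with $x,y,z$ all odd; equivalently, the triangular form $T_x+T_y+2T_z$ is universal. By the triangular theorem of eight of Bosma and Kane it suffices that this form represent $1,2,4,5,8$, and indeed $1=T_1+T_0+2T_0$, $2=T_0+T_0+2T_1$, $4=T_2+T_1+2T_0$, $5=T_2+T_0+2T_1$, $8=T_3+T_0+2T_1$. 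Hence $C_{1^{-2}2^{3}4^{2}}(n)>0$ for all $n\ge 0$.

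\textbf{The Hurwitz class number.} Alternatively, since $\psi(q)^2\psi(q^2)$ is a holomorphic modular form of weight $\tfrac32$, one can compare its Fourier expansion up to the Sturm bound with the sub-series of $\theta^3=\sum_{m\ge 0}r_3(m)q^m$ supported on $m\equiv 8\pmod{16}$, obtaining $12\,C_{1^{-2}2^{3}4^{2}}(n)=r_3(16n+8)$. Writing $16n+8=4(4n+2)$ with $4n+2\equiv 2\pmod 4$ and invoking Gauss's evaluations $r_3(4m)=r_3(m)$ and $r_3(m)=12\,H(4m)$ for $m\equiv 1,2\pmod 4$ ($H$ the Hurwitz class number) gives $C_{1^{-2}2^{3}4^{2}}(n)=H(16n+8)$; since $16n+8$ is a positive multiple of $4$, the discriminant $-(16n+8)$ carries the positive definite form $x^2+(4n+2)y^2$, so $H(16n+8)>0$, proving the theorem again and explaining the class number connection.

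\textbf{Main obstacle.} Step 1 is routine $q$-series bookkeeping; the content is in Step 2. The delicate point is that the $\eta$-quotient only encodes representations of $8n+4$ into \emph{odd} squares: reducing modulo $8$ shows only that any representation of $8n+4$ by $x^2+y^2+2z^2$ is all-odd or all-even, and the all-even case merely descends to a representation of $2n+1$, which loops back to the original question. Both the Bosma--Kane universality result and the Hurwitz class number identity sidestep this by passing to the full ternary count of $16n+8$ (resp. $4n+2$), where the three-squares theorem applies directly.
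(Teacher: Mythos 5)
Your proposal is correct and follows essentially the same route as the paper: both identify $C_{1^{-2}2^{3}4^{2}}(n)$ with $\#\{(a,b,c)\in\N_0^3:T_a+T_b+2T_c=n\}$ and then invoke the Liouville/Bosma--Kane universality of this triangular form (the paper cites Liouville's result directly, while you verify the Bosma--Kane ``theorem of eight'' criterion on $1,2,4,5,8$, which amounts to the same citation). Your supplementary identity $C_{1^{-2}2^{3}4^{2}}(n)=H(16n+8)$ is a nice independent confirmation, but it is not needed for the proof.
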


For $M=2$, we have the example $\frac{\eta(z)^{3}\eta(3z)^{3}}{\eta(2z)^{2}}$.
\begin{thm}\label{thm:132-233}
The sequence $\{\sgn(C_{1^{3}2^{-2}3^{3}}(n))\}_{n\ge1}$ has period $2$. In particular, we have
\begin{equation*}
	(-1)^{n}C_{1^{3}2^{-2}3^{3}}(n)>0.
\end{equation*}
\end{thm}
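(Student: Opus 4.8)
The plan is to first remove the alternating sign via the substitution $q\mapsto -q$, which converts the claim into a positivity statement for a companion $\eta$-quotient. Replacing $q$ by $-q$ in $\prod_{j=1}^{3}\left(q^{j};q^{j}\right)_\infty^{\delta_j}$ with $(\delta_1,\delta_2,\delta_3)=(3,-2,3)$, and using the elementary identity $\prod_{n\ge 1}\left(1-(-1)^{n}q^{n}\right)=\frac{(q^{2};q^{2})_\infty^{3}}{(q;q)_\infty(q^{4};q^{4})_\infty}$ (applied also with $q$ replaced by $q^{3}$), one obtains
\[
q^{\frac13}\sum_{n\ge 0}(-1)^{n}C_{1^{3}2^{-2}3^{3}}(n)\,q^{n}\;=\;G(z):=\frac{\eta(2z)^{7}\eta(6z)^{9}}{\eta(z)^{3}\eta(3z)^{3}\eta(4z)^{3}\eta(12z)^{3}},
\]
an $\eta$-quotient of weight $2$. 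Hence it suffices to prove that every Fourier coefficient of $G$ is positive, and since the exponents of $G$ lie in $\frac13+\Z_{\ge0}$, this is equivalent to proving that $G(3z)=\frac{\eta(6z)^{7}\eta(18z)^{9}}{\eta(3z)^{3}\eta(9z)^{3}\eta(12z)^{3}\eta(36z)^{3}}$ has positive Fourier coefficients.

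Second, using Ligozat's congruence conditions together with his order-at-the-cusps formula, I would check that $G(3z)$ is a holomorphic modular form of weight $2$ on $\Gamma_0(36)$ with the real quadratic nebentypus $\chi$ of conductor $12$ (the Kronecker symbol $\left(\frac{3}{\cdot}\right)$); its Fourier expansion is supported on exponents $\equiv 1\pmod 3$, with the coefficient of $q^{3n+1}$ equal to $(-1)^{n}C_{1^{3}2^{-2}3^{3}}(n)$. Holomorphy at the cusps is in any case forced by the fact that $\eta(z)^{3}\eta(3z)^{3}\eta(2z)^{-2}$ is a holomorphic modular form, so that $C_{1^{3}2^{-2}3^{3}}(n)$ grows only polynomially.

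I would then decompose $G(3z)=\mathcal E(z)+\mathcal S(z)$ into its Eisenstein and cuspidal parts in $M_2(\Gamma_0(36),\chi)$. The Eisenstein part $\mathcal E$ is an explicit linear combination of the standard weight-$2$ Eisenstein series $E_{2}^{\psi,\varphi}(tz)$ with $\psi\varphi=\chi$, so its $q^{3n+1}$-coefficient is a twisted divisor sum; the key point is to re-express this coefficient through Hurwitz class numbers by means of the Kronecker--Hurwitz class number relations --- this is the connection to class numbers alluded to in the abstract --- which makes it transparent that the coefficient is non-negative and bounded below by a fixed positive multiple of $n$ on the relevant progression. If one finds that $\mathcal S$ vanishes (something to be checked directly), then $G(3z)=\mathcal E$ and the proof is complete, the strict positivity being witnessed, for instance, by the $d=3n+1$ term.

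Should $\mathcal S$ be non-zero, then --- since by Knopp--Kohnen--Pribitkin \cite{KnoppKohnenPribitkin} a non-zero cusp form has infinitely many sign changes --- it cannot be absorbed as a non-negative summand, and one bounds it instead. Writing $\mathcal S$ explicitly in terms of the finitely many newforms of level dividing $36$ carrying the character $\chi$ and invoking Deligne's bound, one gets $|a_{\mathcal S}(m)|\ll_{\varepsilon} m^{1/2+\varepsilon}$ with an effective implied constant; combined with $a_{\mathcal E}(3n+1)\gg n$ this yields $(-1)^{n}C_{1^{3}2^{-2}3^{3}}(n)=a_{\mathcal E}(3n+1)+a_{\mathcal S}(3n+1)>0$ for all $n\ge n_0$ with an explicit $n_0$, after which the finitely many remaining $n$ are settled by direct computation of $C_{1^{3}2^{-2}3^{3}}(n)$. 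Either way one obtains $(-1)^{n}C_{1^{3}2^{-2}3^{3}}(n)>0$ for every $n\ge 0$, which is stronger than --- and immediately implies --- the assertion that $\sgn(C_{1^{3}2^{-2}3^{3}}(n))$ has period $2$. The step I expect to be the main obstacle is making the Eisenstein lower bound $a_{\mathcal E}(3n+1)\gg n$ genuinely effective and free of accidental cancellation --- which is exactly what the class-number reformulation secures --- together with determining $\mathcal S$ precisely enough to keep the threshold $n_0$ small.
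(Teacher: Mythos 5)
Your opening move --- replacing $q$ by $-q$ via $\prod_{n\ge1}\bigl(1-(-1)^nq^n\bigr)=\frac{(q^2;q^2)_\infty^3}{(q;q)_\infty(q^4;q^4)_\infty}$ to turn the claim into positivity of the companion quotient $G$ --- is correct and is a genuinely different (and rather elegant) framing from the paper, which keeps the sign and computes it directly. The overall architecture (rescale so the support sits on $3n+1$, decompose into Eisenstein plus cuspidal, observe that the cuspidal part is in fact zero, and read off signs from twisted divisor sums) also matches what actually happens: the paper shows $\frac{\eta(3z)^3\eta(9z)^3}{\eta(6z)^2}$ equals an explicit linear combination of sieved Eisenstein series $E_{2,\chi_1,\chi_{12}}|S_{M,m}$ and $E_{2,\chi_{12},\chi_1}|S_{M,m}$ in $M_2(\Gamma_0(144),\chi_{12})$, certified by the valence formula, so there is no cusp form to bound at all.

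The genuine gap is at the step you yourself flag as the crux. The $q^{3n+1}$-coefficient of the Eisenstein part is \emph{not} a single twisted divisor sum whose positivity becomes ``transparent'' after a class-number reformulation; the Kronecker--Hurwitz relations play no role here (the Hurwitz class numbers in this paper enter only in the half-integral weight Theorem \ref{thm:ClassNum}). What one actually gets (Lemma \ref{lem:c132-233}) is a different signed combination of sums $\sum_{d\mid(3n+1)}\left(\frac{3}{d}\right)d$ and $\sum_{d\mid(3n+1)}\left(\frac{12}{d}\right)d$ in each residue class of $n$ modulo $4$, and the raw sum $\sum_{d\mid m}\left(\frac{3}{d}\right)d$ is \emph{not} of constant sign: by multiplicativity its sign is $\left(\frac{3}{m}\right)$, which oscillates. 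Positivity of your $G$-coefficients only emerges after (i) pinning down the exact Eisenstein combination (a finite computation you have not performed) and (ii) checking case by case that the prefactors $1$, $-\tfrac13$, $-\tfrac13\bigl(1+2^{\ell+1}\left(\frac{3}{m}\right)\bigr)$ attached to each class of $3n+1$ modulo $12$ have sign exactly $\left(\frac{3}{m}\right)$, so that the products are positive. That elementary but unavoidable bookkeeping is the entire content of the paper's proof and is absent from your plan; as written, the asserted lower bound $a_{\mathcal E}(3n+1)\gg n$ ``free of accidental cancellation'' is unsupported (and should in any case be $\gg_\varepsilon n^{1-\varepsilon}$, since $\prod_{p\mid m}(1-\tfrac1p)$ can be as small as $1/\log\log m$). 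Your Deligne-bound fallback is sound but moot, since the cuspidal part vanishes.
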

For $M=3$, we find the example $\frac{\eta(z)^{4}\eta(2z)^{4}}{\eta(3z)^{2}}$.
\begin{thm}\label{thm:14243-2}
The sequence $\{\sgn(C_{1^{4}2^{4}3^{-2}}(n))\}_{n\ge1}$ has period $3$. In particular, we have
\[
\sgn \left(C_{1^{4}2^{4}3^{-2}}(n)\right)=\begin{cases}
1&\text{if }3\mid n,\\
-1&\text{otherwise}.
\end{cases}
\]
\end{thm}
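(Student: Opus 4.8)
The plan is to recognize the generating series as a holomorphic modular form of weight $3$ and then peel off an Eisenstein part whose coefficients force the sign pattern. Since
\[
\frac{\eta(z)^{4}\eta(2z)^{4}}{\eta(3z)^{2}}=q^{\frac14}\sum_{n\ge0}C_{1^{4}2^{4}3^{-2}}(n)q^{n},
\]
the rescaled $\eta$-quotient $F(z):=\frac{\eta(4z)^{4}\eta(8z)^{4}}{\eta(12z)^{2}}=\sum_{n\ge0}C_{1^{4}2^{4}3^{-2}}(n)q^{4n+1}$ has an integral $q$-expansion, and by the standard modularity and holomorphy criteria for $\eta$-quotients (the congruences $\sum_{\delta}\delta r_{\delta}\equiv\sum_{\delta}(N/\delta)r_{\delta}\equiv0\pmod{24}$ hold with $N=72$ for the exponents $(r_{4},r_{8},r_{12})=(4,4,-2)$, and the order of vanishing at every cusp of $\Gamma_{0}(72)$ is nonnegative) we have $F\in M_{3}(\Gamma_{0}(72),\chi_{-4})$, where $\chi_{-4}$ denotes the quadratic character of conductor $4$. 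Since $F$ does not vanish at every cusp, its Eisenstein component is nonzero.

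First I would fix an explicit basis of the finite-dimensional space $M_{3}(\Gamma_{0}(72),\chi_{-4})$---the Eisenstein series $E_{3,\psi_{1},\psi_{2}}(tz)$ (character pairs with $\psi_{1}\psi_{2}=\chi_{-4}$ and conductor product dividing $72$, together with the allowed level shifts $t$) and a basis of $S_{3}(\Gamma_{0}(72),\chi_{-4})$---and then match finitely many Fourier coefficients to determine the decomposition $F=F_{\mathrm{Eis}}+F_{\mathrm{cusp}}$. Writing $a_{\mathrm{Eis}}(m)$ and $a_{\mathrm{cusp}}(m)$ for the $q^{m}$-coefficients, we obtain $C_{1^{4}2^{4}3^{-2}}(n)=a_{\mathrm{Eis}}(4n+1)+a_{\mathrm{cusp}}(4n+1)$. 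The Eisenstein coefficients are explicit $\mathbb{Q}$-linear combinations of twisted divisor sums $\sum_{d\mid m/t}\psi_{1}(m/(td))\psi_{2}(d)d^{2}$, hence are $O(m^{2})$; the cuspidal coefficients obey the Hecke bound $a_{\mathrm{cusp}}(m)=O(m^{3/2})$, and if $F_{\mathrm{cusp}}$ turns out to be a combination of CM newforms one even gets $O(m^{1+\varepsilon})$ together with a description of these coefficients by Hurwitz class numbers via the Eichler--Selberg trace formula, which is the source of the class-number connection. In all cases $a_{\mathrm{cusp}}(m)=o(m^{2})$.

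The heart of the argument is to show, along the progression $m=4n+1$, that
\[
\sgn\bigl(a_{\mathrm{Eis}}(4n+1)\bigr)=\begin{cases}1&\text{if }3\mid n,\\-1&\text{otherwise},\end{cases}
\]
and, crucially, that $|a_{\mathrm{Eis}}(4n+1)|\ge c\,(4n+1)^{2}$ for some explicit $c>0$. I would read both off the closed-form divisor sums defining $a_{\mathrm{Eis}}$: the parts of $F_{\mathrm{Eis}}$ that see the prime $3$---the Eisenstein series attached to the conductor-$3$ character $\chi_{-3}$ (admissible since $\chi_{-3}^{2}$ is trivial on $(\mathbb{Z}/72\mathbb{Z})^{\times}$, so the product stays $\chi_{-4}$) together with the level shifts $E(3z),E(9z)$---fix the behaviour modulo $3$, while the prime-to-$3$ part of the divisor sums is bounded below by a fixed positive multiple of $(4n+1)^{2}$; the point here is that the local factor of $\sum_{d\mid m}\chi(d)d^{2}$ at a prime $p\ge5$ is, up to sign, at least $(1-p^{-2})\,p^{2\ord_{p}(m)}$ and $\prod_{p}(1-p^{-2})=6/\pi^{2}>0$. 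The hard part will be making this lower bound hold \emph{uniformly} in $n$ with an explicit constant large enough that only a short finite check remains: a priori the several Eisenstein eigenforms comprising $F_{\mathrm{Eis}}$ could cancel against each other, so one must use the exact combination and group its terms according to $\ord_{3}(4n+1)$. The sign pattern itself is comparatively mechanical once the behaviour at $p=2,3$ is pinned down.

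Finally, comparing the lower bound for $a_{\mathrm{Eis}}$ with the Hecke bound for $a_{\mathrm{cusp}}$ produces an explicit $N_{0}$ with $\sgn\bigl(C_{1^{4}2^{4}3^{-2}}(n)\bigr)=\sgn\bigl(a_{\mathrm{Eis}}(4n+1)\bigr)$ for all $n\ge N_{0}$; for $1\le n<N_{0}$ one expands $\prod_{j=1}^{3}(q^{j};q^{j})_{\infty}^{\delta_{j}}$ and reads off the signs directly---already $C_{1^{4}2^{4}3^{-2}}(1)=-4$, $C_{1^{4}2^{4}3^{-2}}(2)=-2$, and $C_{1^{4}2^{4}3^{-2}}(3)=26$ exhibit the claimed pattern and rule out period $1$. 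Since $3$ is prime, this shows that the sign sequence has period exactly $3$, with the asserted values.
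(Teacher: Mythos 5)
Your proposal is correct and follows essentially the same route as the paper: pass to $\frac{\eta(4z)^4\eta(8z)^4}{\eta(12z)^2}\in M_3(\Gamma_0(72),\chi_{-4})$, split off the Eisenstein part by matching finitely many coefficients, read the sign pattern and a lower bound $\gg m^2$ from the twisted divisor sums (with the delicate point being the grouping of terms according to $\ord_3(4n+1)$, which you correctly flag), dominate the cuspidal part by Deligne's bound, and finish with a finite computer check. The paper implements exactly this, with the explicit decomposition into $\mathcal{E}$ plus sieves and twists of the two newforms $g_1,g_2$ and the explicit inequalities \eqref{E:claimcases} and \eqref{eqn:claimcases2} supplying the uniform constants you leave to be determined.
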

For $M=8$, we find two examples, $\frac{\eta(z)^{4}\eta(2z)^{2}}{\eta(4z)^{2}}$ and $\frac{\eta(z)^{4}\eta(2z)^{4}}{\eta(4z)^{3}}$.
\begin{thm}\label{thm:M=8}
\noindent

\noindent
\begin{enumerate}[leftmargin=*,label=\rm(\arabic*)]
\item
The sequence $\{\sgn(C_{1^{4}2^{2}4^{-2}}(n))\}_{n\ge1}$ has period $8$. In particular, we have
\[
	\sgn\left(C_{1^{4}2^{2}4^{-2}}(n)\right)=
	\begin{cases}
		1&\text{if }n\equiv 0,3,7\pmod{8},\\
		-1&\text{if }n\equiv 1,4,5\pmod{8},\\
		0&\text{if }n\equiv 2\pmod{4}.
	\end{cases}
\]
\item
The sequence $\{\sgn(C_{1^{4}2^{4}4^{-3}}(n))\}_{n\ge1}$ has period $8$. In particular, we have
\[
\sgn \left(C_{1^{4}2^{4}4^{-3}}(n)\right)=\begin{cases}
1&\text{if }n\equiv 0,3,6,7\pmod{8},\\
-1&\text{if }n\equiv 1,2,4,5\pmod{8}.
\end{cases}
\]
\end{enumerate}
\end{thm}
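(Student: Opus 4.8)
The plan is to identify each $\eta$-quotient with a classical theta product, turning its Fourier coefficients into signed representation numbers, and then to exploit the structure of the spaces of modular forms of weight $2$ and $\tfrac52$ to produce a closed arithmetic formula whose sign is visibly periodic. \emph{Step 1: reduction to theta products.} Put $\psi(z):=\frac{\eta(z)^2}{\eta(2z)}=\sum_{n\in\Z}(-1)^nq^{n^2}$, so that also $\psi(2z)=\frac{\eta(2z)^2}{\eta(4z)}$. Cancelling $\eta$-factors gives
\begin{equation*}
	\frac{\eta(z)^4\eta(2z)^2}{\eta(4z)^2}=\psi(z)^2\psi(2z)^2,\qquad\frac{\eta(z)^4\eta(2z)^4}{\eta(4z)^3}=\psi(z)^2\psi(2z)^3 .
\end{equation*}
Hence $C_{1^42^24^{-2}}(n)=(-1)^n\sum_{x_1^2+x_2^2+2y_1^2+2y_2^2=n}(-1)^{y_1+y_2}$ and $C_{1^42^44^{-3}}(n)=(-1)^n\sum_{x_1^2+x_2^2+2(y_1^2+y_2^2+y_3^2)=n}(-1)^{y_1+y_2+y_3}$, the sign simplification using that $x_1^2+x_2^2\equiv n\pmod 2$ on any solution. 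By Ligozat's criteria the two generating functions are holomorphic modular forms, of weight $2$ on $\Gamma_0(16)$ and of weight $\tfrac52$ on $\Gamma_0(32)$, with trivial character (in the half-integral case with the standard multiplier system).

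\emph{Step 2: the weight $2$ case.} Since $X_0(16)$ has genus $0$, $S_2(\Gamma_0(16))=\{0\}$, so $\psi(z)^2\psi(2z)^2$ lies in the Eisenstein subspace $\CE_2(\Gamma_0(16))$. Expanding it in an explicit basis — built from $E_2(z)-tE_2(tz)$ for $t\mid 16$ together with the holomorphic weight-$2$ Eisenstein series $E_2^{\chi_{-4},\chi_{-4}}$ of level $16$, whose $n$-th coefficient is $\sum_{d\mid n}\chi_{-4}(d)\chi_{-4}(n/d)\,d$ — and matching Fourier coefficients up to the Sturm bound determines the expansion. The resulting closed formula expresses $C_{1^42^24^{-2}}(n)$ as an explicit $\Q$-linear combination of $\sigma_1(n/t)$, $t\mid 16$, and of $\sum_{d\mid n}\chi_{-4}(d)\chi_{-4}(n/d)\,d$, from which the period-$8$ sign pattern is a finite case check on $n\bmod 8$ and $v_2(n)$: for odd $n$ the formula collapses to a fixed negative multiple of the multiplicative function $\sum_{d\mid n}\chi_{-4}(d)\chi_{-4}(n/d)\,d$, whose sign equals $\chi_{-4}(n)$ and so depends only on $n\bmod 4$; it vanishes identically for $n\equiv 2\pmod 4$; and for $n\equiv 0,4\pmod 8$ one uses $\sigma_1(2^am)=(2^{a+1}-1)\sigma_1(m)$ together with the positivity of $\sigma_1$ and of that twisted sum on odd arguments to read off the sign.

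\emph{Step 3: the weight $\tfrac52$ case.} The same strategy applies to $\psi(z)^2\psi(2z)^3\in M_{5/2}(\Gamma_0(32))$. Its Eisenstein subspace is spanned by the Cohen–Eisenstein series $\mathcal{H}_{5/2}(z)=\sum_{N\ge 0}H(2,N)q^N$ — this is the promised connection to (generalized) Hurwitz class numbers — together with its images under $z\mapsto 2z,4z$ and a $\chi_{-4}$-twisted partner. One determines the Eisenstein part by matching Fourier coefficients up to the (half-integral) Sturm bound; the difference is a cusp form, which one shows vanishes, so that $\psi(z)^2\psi(2z)^3$ is purely Eisenstein. This gives a closed formula for $C_{1^42^44^{-3}}(n)$ in the $H(2,\cdot)$ and twisted divisor sums, and a residue-by-residue analysis mod $8$ — invoking the sign of $H(2,N)$ (which is $\le 0$ for $N>0$ and $=0$ unless $N\equiv 0,1\pmod 4$) via Cohen's factorization of $H(2,N)$ through $L(-1,\chi_D)$ — yields the stated pattern.

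\emph{Main obstacle.} The conceptual content is light and most of the work is bookkeeping. The genuine difficulty is the weight-$\tfrac52$ step: one must verify that the cuspidal projection of $\psi(z)^2\psi(2z)^3$ vanishes — only then are the coefficients the rigidly arithmetic quantities that can be periodic in sign — and then correctly track the signs of the Cohen numbers $H(2,N)$ across all residue classes and $2$-adic valuations; the weight-$2$ step, having no cusp forms, reduces to a finite if tedious verification. A secondary point of care is to run the Sturm-bound comparisons at the true levels $16$ and $32$ rather than at smaller candidate levels.
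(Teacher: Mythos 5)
For part (1) your argument is essentially the paper's. The paper proves (its Lemma \ref{lem:14224-2}) the identity $\frac{\eta(z)^4\eta(2z)^2}{\eta(4z)^2}=1-4\sum_{n\ge1}\left(\frac{-4}{n}\right)\sigma(n)q^n+8\sum_{n\ge1}(-1)^n\sigma(n)q^{4n}-32\sum_{n\ge1}\sigma(n)q^{16n}$ by exhibiting the right-hand side as an explicit Eisenstein combination in $M_2(\Gamma_0(16))$ and comparing coefficients up to the valence bound, then reads off the sign case by case in $n\bmod 8$ and $\ord_2(n)$; your Step 2 is the same computation in a marginally different basis, and your case analysis matches.

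For part (2) you take a genuinely different route, and this is where your proposal has a real gap. The paper performs \emph{no} Eisenstein/cuspidal decomposition in weight $\tfrac52$: its Lemma \ref{lem:142443} writes $\frac{\eta(z)^4\eta(2z)^4}{\eta(4z)^3}$ as an explicit rational linear combination of the sieved coefficients $r_{(1,1,2,2,2)}(n)$ of the quinary theta series $\theta(z)^2\theta(2z)^3$ (one rational constant per residue class of $n$ modulo $16$, plus an $r_{(1,1,2,2,2)}(n/4)$ term when $8\mid n$), verifies that identity by the valence formula, and then invokes Liouville's classical positivity $r_{(1,1,2,2)}(n)>0$ \cite{Liouville2} to get $r_{(1,1,2,2,2)}(n)>0$; the sign of each coefficient is then simply the sign of the rational constant in front of it. Your plan instead requires (i) that the cuspidal projection of $\psi(z)^2\psi(2z)^3$ vanish, so that the form lies in the span of Cohen--Eisenstein series and their rescalings/twists, and (ii) a residue-by-residue sign analysis of the resulting combination of the numbers $H(2,N)$. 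You identify (i) as the main obstacle but give no argument for it; it is not automatic (the relevant cusp space need not be trivial, and even if $x^2+y^2+2z^2+2t^2+2u^2$ has one class in its genus, that must be established), and until it is proved the coefficients are not the ``rigidly arithmetic quantities'' your sign analysis presupposes. A secondary inaccuracy: the character of $\psi(z)^2\psi(2z)^3$ relative to the theta multiplier is $\chi_8$ rather than trivial (each $V_2$ contributes $\chi_8$ by Lemma \ref{lem:operatorshalf}), so the Eisenstein basis you propose, built from $\mathcal H_{5/2}\in M_{5/2}(\Gamma_0(4))$ with trivial character, is not the right one without further twisting. The paper's route shows that none of this is needed: positivity of the representation numbers does all the work, and I would recommend rebuilding part (2) along those lines.
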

For a prime $p$, we next consider sign changes for the pair of infinite families of $\eta$-quotients
\begin{equation*}
Q_p(z):= \frac{\eta(z)^{p^2}}{\eta(pz)^p}, \quad P_p(z):= \frac{\eta(z)^p}{\eta(pz)}.
\end{equation*}
These families have appeared throughout the literature, and explicit Fourier expansions are known for certain small primes $p$. The function $Q_2$ is an Eisenstein series (see \cite[Example 10.6 (10.14)]{Koehlerbook}) and $P_2$ is a theta function (see \cite[Theorem 1.60]{OnoBook})
\begin{equation*}
Q_2(z) = 1 - 4 \sum_{n\geq1}   (-1)^{n+1} \sum_{ d \vert n } \left( \frac{-1}{d}\right) q^n,\qquad
P_2(z)= \sum_{n\in\Z} (-1)^n q^{n^2},
\end{equation*}
where $\left(\frac{\cdot}{\cdot}\right)$ is the extended Legendre symbol. The Fourier expansion (see \cite[Example 11.4]{Koehlerbook})
\[
P_3(z) = 1 - 3 \sum_{n\geq1} \sum_{d \vert n} \left( \frac{d}{3}\right) q^n + 9 \sum_{n\ge1} \sum_{d \vert n} \left( \frac{d}{3}\right) q^{3n}
\]
closely resembles the Fourier expansion of $Q_2$. These functions are  \begin{it}lacunary\end{it} (see \cite[Theorem 1.2]{CGR06} for the statement for $Q_2$), meaning that the set of $n$ for which the $n$-th Fourier coefficient is non-zero has density zero, and hence their Fourier coefficients cannot exhibit regular sign changes. Although the Fourier expansion (see \cite[Example 12.16 (12.34)]{Koehlerbook})
\begin{equation*}
P_5(z) = 1 - 5 \sum_{n\geq1}  \sum_{d \vert n} \left( \frac{d}{5}\right) d q^n
\end{equation*}
also resembles the Fourier expansions of $Q_2$ and $P_3$, we next see that the signs of the Fourier coefficients of $P_5$ satisfy a somewhat regular pattern related to the prime factorization of $n$ which is part of a more general phenomenon for larger $p$.
\begin{thm}\label{thm:QpPpgen}
\noindent

\noindent
\begin{enumerate}[leftmargin=*,label=\rm(\arabic*)]
\item
Let $p\geq 3$ be prime and $a\in\N_0$. Then, for sufficiently large $m$ coprime to $p$,  we have
\[
\sgn\left(C_{1^{p^2}p^{-p}}\left(p^am\right)\right)=\left(\frac{2}{p}\right) \left(\frac{m}{p}\right).
\]
\item
Let $p\geq 5$ be prime and $a\in\N_0$. Then, for $m$ sufficiently large coprime to $p$,  we have
\[
\sgn\left(C_{1^{p}p^{-1}}\left(p^am\right)\right)=\left(\frac{-2}{p}\right) \left(\frac{m}{p}\right).
\]
\end{enumerate}
\end{thm}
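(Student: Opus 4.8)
The plan is to realize $P_p$ and $Q_p$ as holomorphic modular forms on $\Gamma_0(p)$ whose Eisenstein parts completely control the signs of the Fourier coefficients, while the cuspidal parts are negligible for large $n$. Write $k$ for the relevant weight, namely $k=\frac{p-1}{2}$ for $P_p$ and $k=\frac{p^2-p}{2}$ for $Q_p$, and let $\chi=\pa{\cdot}{p}$ be the Legendre symbol, which by the standard $\eta$-quotient formulas is the nebentypus of both $P_p$ and $Q_p$. Using Ligozat's order formula, the order of $P_p$ (resp. $Q_p$) at the cusp $\infty$ of $\Gamma_0(p)$ is $0$ while its order at the cusp $0$ is $\frac{p^2-1}{24}$ (resp. $\frac{p(p^2-1)}{24}$), so both are holomorphic modular forms with a nonzero value at $\infty$ and a zero at $0$. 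First I would record this, and then decompose $P_p=E_P+S_P$ and $Q_p=E_Q+S_Q$ with $E_P,E_Q$ Eisenstein and $S_P,S_Q$ cuspidal.

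\textbf{Identifying the Eisenstein part.} For $k\ge 2$ the Eisenstein subspace of $M_k(\Gamma_0(p),\chi)$ is two-dimensional, spanned by $E_k^{1,\chi}(z)=-\frac{B_{k,\chi}}{2k}+\sum_{n\ge 1}\sigma_{k-1}^{\chi}(n)q^n$, where $\sigma_{k-1}^{\chi}(n):=\sum_{d\mid n}\chi(d)d^{k-1}$, together with $E_k^{\chi,1}$, the latter having vanishing constant term at $\infty$. The Fricke involution $W_p$ interchanges the two cusps and, up to a nonzero Gauss-sum scalar, interchanges these two Eisenstein series (here $\chi$ is real, so $\bar\chi=\chi$); hence $E_k^{1,\chi}$ has vanishing constant term at the cusp $0$. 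Matching constant terms at both cusps with $P_p$ (resp. $Q_p$) then forces $E_P=-\frac{2k}{B_{k,\chi}}E_k^{1,\chi}$ (and the same for $E_Q$ with the appropriate $k$), so the coefficient of $q^n$ in $E_P$ equals $-\frac{2k}{B_{k,\chi}}\sigma_{k-1}^{\chi}(n)$. For $p=5$ one has $k=2$ and, by the stated expansion, $S_P=0$, so $P_5$ is purely Eisenstein and the theorem for $p=5$ follows already from the next step.

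\textbf{The sign of the Eisenstein coefficient.} Since $\sigma_{k-1}^{\chi}$ is multiplicative and $\chi(p)=0$, we have $\sigma_{k-1}^{\chi}(p^a m)=\sigma_{k-1}^{\chi}(m)$ whenever $p\nmid m$. An elementary analysis of the local factor $\sum_{j=0}^{v}\chi(\ell)^j\ell^{j(k-1)}$ at $\ell^v\| m$ shows that it has sign $\chi(\ell)^v=\chi(\ell^v)$ and absolute value $\asymp \ell^{v(k-1)}$; multiplying over $\ell$ gives $\sgn\!\left(\sigma_{k-1}^{\chi}(m)\right)=\pa{m}{p}$ and $\left|\sigma_{k-1}^{\chi}(m)\right|\gg_{\varepsilon}m^{k-1-\varepsilon}$. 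Next, $\sgn\!\left(-\frac{2k}{B_{k,\chi}}\right)=-\sgn(B_{k,\chi})=\sgn\!\left(L(1-k,\chi)\right)$; using the functional equation of $L(s,\chi)$ (whose root number is $+1$ for a real primitive character of conductor $p$), the sign is carried by the ratio of $\Gamma$-factors and works out to $(-1)^{\lfloor k/2\rfloor}$. A short quadratic-reciprocity computation, distinguishing $p$ modulo $8$ and using the values of $\pa{-1}{p}$ and $\pa{2}{p}$, evaluates $(-1)^{\lfloor k/2\rfloor}$ to $\pa{-2}{p}$ when $k=\frac{p-1}{2}$ and to $\pa{2}{p}$ when $k=\frac{p^2-p}{2}$. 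Hence the coefficient of $q^{p^a m}$ in $E_P$ has sign $\pa{-2}{p}\pa{m}{p}$ and size $\gg_{\varepsilon}m^{k-1-\varepsilon}$, and similarly for $E_Q$ with $\pa{2}{p}$.

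\textbf{Bounding the cusp form and concluding.} Because $\chi$ is nontrivial of conductor equal to the level $p$, the space $S_k(\Gamma_0(p),\chi)$ contains no oldforms, so $S_P$ and $S_Q$ are linear combinations of newforms and Deligne's bound gives $|a_{S_P}(n)|,\,|a_{S_Q}(n)|\ll_{\varepsilon}n^{(k-1)/2+\varepsilon}$. Therefore, for each fixed $a$, $C_{1^{p}p^{-1}}(p^a m)=-\frac{2k}{B_{k,\chi}}\sigma_{k-1}^{\chi}(m)+O_a\!\left((p^a m)^{(k-1)/2+\varepsilon}\right)$, and since $k\ge 2$ the main term (of size $\gg_{\varepsilon}m^{k-1-\varepsilon}$) dominates once $m$ is large, giving $\sgn\!\left(C_{1^{p}p^{-1}}(p^a m)\right)=\pa{-2}{p}\pa{m}{p}$; the argument for $C_{1^{p^2}p^{-p}}$ is identical, with $\pa{2}{p}$. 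I expect the main obstacle to be the content of the second step together with the Bernoulli-sign statement in the third: correctly pinning down the Eisenstein series and its normalization — the Fricke-involution and Gauss-sum bookkeeping at the two cusps — and then extracting $(-1)^{\lfloor k/2\rfloor}$ from the functional equation and reconciling it, via reciprocity, with $\pa{-2}{p}$ and $\pa{2}{p}$, is where all the delicate constant-chasing is concentrated.
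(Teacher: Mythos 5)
Your proposal is correct and follows essentially the same route as the paper: decompose $Q_p$ and $P_p$ into an Eisenstein part (pinned down by the vanishing at the cusp $0$, the paper's \Cref{lem:Eisensteinp-growth}) plus a cusp form, determine the sign and lower bound of the twisted divisor sum (the paper's \Cref{lem:sumdbound}), evaluate $\sgn(L(1-k,\chi))$ via the functional equation (the paper's \Cref{lem:Lchisgn}), and beat the cuspidal error by a polynomial power of $m$. The only cosmetic difference is that you bound the cuspidal part by observing $S_k(\Gamma_0(p),\chi)$ is spanned by newforms and invoking Deligne directly, whereas the paper routes the same Deligne input through the Schulze-Pillot--Yenirce coefficient bound (\Cref{lem:SY}); both yield the needed $O_{a,p,\varepsilon}(m^{(k-1)/2+\varepsilon})$.
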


Although the sign changes asserted in Theorem \ref{thm:QpPpgen} only hold for $m$ sufficiently large, in special cases the formula holds for all $n$ if the underlying space of cusp forms is trivial.
\begin{cor}\label{cor:conj99}
\noindent

\noindent
\begin{enumerate}[leftmargin=*,label=\rm(\arabic*)]
\item
For $a\in\N_0$ and $m\in\N$ with $\gcd(3,m)=1$, we have
\[
	\sgn \left(C_{1^93^{-3}}\left(3^am\right)\right)=-\left(\frac{m}{3}\right).
\]
\item
For $a\in\N_0$ and $m\in\N$ with $\gcd(5,m)=1$, we have
\[
	\sgn \left(C_{1^55^{-1}}\left(5^am\right)\right)= -\left(\frac{m}{5}\right).
\]
\end{enumerate}
\end{cor}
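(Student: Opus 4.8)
The plan is to deduce Corollary~\ref{cor:conj99} directly from Theorem~\ref{thm:QpPpgen} by showing that for $p\in\{3,5\}$ the ambient space of cusp forms is trivial, so that the phrase ``for all sufficiently large $m$'' in Theorem~\ref{thm:QpPpgen} may be replaced by ``for all $m$''.

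First I would fix the modular framework underlying the proof of Theorem~\ref{thm:QpPpgen}: by the Ligozat--Newman criteria, $Q_3=\eta(z)^9/\eta(3z)^3\in M_3(\Gamma_0(3),\chi)$, where $\chi=\left(\frac{\cdot}{3}\right)$ is the odd quadratic character modulo $3$, and $P_5=\eta(z)^5/\eta(5z)\in M_2(\Gamma_0(5),\psi)$, where $\psi=\left(\frac{\cdot}{5}\right)$ is the even quadratic character modulo $5$. Next I would record the vanishing $S_3(\Gamma_0(3),\chi)=\{0\}$ and $S_2(\Gamma_0(5),\psi)\subseteq S_2(\Gamma_1(5))=\{0\}$; both follow from the standard dimension formulas. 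Consequently $Q_3$ and $P_5$ equal their Eisenstein projections, with no cuspidal component.

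Now I would revisit the proof of Theorem~\ref{thm:QpPpgen}. There the sign of $C_{1^{p^2}p^{-p}}(p^am)$, respectively $C_{1^p p^{-1}}(p^am)$, is read off from the closed-form Fourier coefficients of the Eisenstein part: this coefficient factors as the (strictly positive, here equal to $1$) $p$-part times an $m$-part that never vanishes and whose sign is $\left(\frac mp\right)$ up to a fixed constant, and the sole role of the hypothesis ``$m$ large'' is to dominate the cuspidal contribution by this Eisenstein main term via a Deligne-type bound. Since the cuspidal contribution is identically zero for $p\in\{3,5\}$, the sign identity of Theorem~\ref{thm:QpPpgen} holds for every $a\in\N_0$ and every $m\in\N$ coprime to $p$. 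It then remains to substitute the Legendre-symbol values $\left(\frac23\right)=-1$ and $\left(\frac{-2}{5}\right)=-1$, which turn the conclusions of Theorem~\ref{thm:QpPpgen} into $\sgn(C_{1^93^{-3}}(3^am))=-\left(\frac m3\right)$ and $\sgn(C_{1^55^{-1}}(5^am))=-\left(\frac m5\right)$, as claimed.

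Alternatively, one can give a self-contained proof. The function $P_5$ equals its own Eisenstein series $P_5(z)=1-5\sum_{n\ge1}\big(\sum_{d\mid n}\left(\tfrac d5\right)d\big)q^n$ recorded in the text, and comparing finitely many Fourier coefficients inside the two-dimensional space $M_3(\Gamma_0(3),\chi)$ identifies $Q_3(z)=1-9\sum_{n\ge1}\big(\sum_{d\mid n}\left(\tfrac d3\right)d^2\big)q^n$. Writing $n=p^am$ with $\gcd(p,m)=1$, multiplicativity collapses the $p$-part of each divisor sum to $1$, and an elementary evaluation of the resulting finite geometric series shows that $\sgn\big(\sum_{d\mid m}\left(\tfrac dp\right)d^{\nu}\big)=\left(\tfrac mp\right)$ for $\nu\in\{1,2\}$ and all $m$ coprime to $p$; the stated sign formulas follow. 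In either route the only step with real content beyond Theorem~\ref{thm:QpPpgen} is the triviality of the two cusp form spaces, which is routine; the one point requiring care in the elementary version is checking that the $p$-power part of the divisor sum contributes a strictly positive factor, so that the sign is governed entirely by the part of $n$ coprime to $p$.
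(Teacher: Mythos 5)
Your proposal is correct and follows essentially the same route as the paper: the paper likewise deduces the corollary from Theorem~\ref{thm:QpPpgen} by noting that $S_{3}(\Gamma_0(3),\chi_{-3})=\{0\}$ and $S_{2}(\Gamma_0(5),\chi_{5})=\{0\}$, so the cuspidal parts $f_3$ and $g_5$ in the decomposition \eqref{eqn:QpPpEisenstein} vanish and the sign identities \eqref{eqn:sgnaQpEisen} and \eqref{eqn:sgnaPpEisen} hold for all $n$, with $\left(\frac{2}{3}\right)=\left(\frac{-2}{5}\right)=-1$. Your alternative self-contained variant is just the paper's Lemma~\ref{lem:1933} combined with Lemma~\ref{lem:sumdbound}, so it adds nothing essentially new.
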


Finally, we consider a half-integral weight case for which the sign of the $n$-th Fourier coefficient resembles Corollary \ref{cor:conj99}.

\begin{thm}\label{thm:ClassNum}
For $n\in\N_0$, write $8n+1=3^am$ with $3\nmid m$. Then we have
\[
\sgn \left(C_{1^22^23^{-1}}(n)\right)=
\begin{cases}
\vspace{.15cm}\left(\frac{m}{3}\right)&\text{if }a=0,\\
-\frac{\left(\frac{m}{3}\right)+1}{2} &\text{if }a=1,\\
-1&\text{if }a=2,\\
\sgn \left(C_{1^22^23^{-1}}\left(\frac{n-1}{9}\right)\right)&\text{if }a\geq 3.
\end{cases}
\]
\end{thm}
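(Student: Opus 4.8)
The plan is to first recognize $g(z):=\frac{\eta(z)^2\eta(2z)^2}{\eta(3z)}=q^{1/8}\sum_{n\ge0}C_{1^22^23^{-1}}(n)q^n$ as a holomorphic modular form of weight $\frac32$ on a congruence subgroup (of level a multiple of $72$) with a character, via Ligozat's criteria for eta-quotients; one also checks holomorphy at all cusps. Its Fourier expansion is supported on exponents $\equiv1\pmod8$, so $C_{1^22^23^{-1}}(n)$ is naturally indexed by $8n+1$.

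The crux is an explicit identity. Trivially $g=\frac{\eta(2z)^2}{\eta(z)}\cdot P_3(z)$, and $\frac{\eta(2z)^2}{\eta(z)}=q^{1/8}\psi(q)$ with $\psi(q):=\sum_{n\ge0}q^{n(n+1)/2}$, so $\sum_{n\ge0}C_{1^22^23^{-1}}(n)q^n=\psi(q)\,P_3(z)$. I would then invoke the classical cubic theta identity $P_3=\tfrac12\bigl(3a(q^3)-a(q)\bigr)$, where $a(q):=\sum_{j,k\in\mathbb Z}q^{j^2+jk+k^2}=1+6\sum_{n\ge1}\bigl(\sum_{d\mid n}\bigl(\tfrac d3\bigr)\bigr)q^n$ is the theta series of the $A_2$ (hexagonal) lattice. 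Multiplying $\psi$ into these binary theta series and completing the square (via $8\cdot\tfrac{j(j+1)}2+1=(2j+1)^2$ and $4(j^2+jk+k^2)=(2j+k)^2+3k^2$) rewrites $\psi\cdot a$ and $\psi\cdot a(q^3)$ as theta series of the diagonal ternary forms $x^2+2y^2+6z^2$ and $x^2+6y^2+18z^2$; the parity congruences forced by the substitution are automatic since $8n+1$ is odd. Writing $[a,b,c]$ for $ax^2+by^2+cz^2$ and $r_Q(N)$ for the number of representations of $N$ by $Q$, this gives
\[
C_{1^22^23^{-1}}(n)=\tfrac14\bigl(3\,r_{[1,6,18]}(8n+1)-r_{[1,2,6]}(8n+1)\bigr),
\]
reducing the theorem to evaluating ternary representation numbers of $8n+1$.

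The last step is this evaluation. Both ternary forms have small determinant; I would check that each lies alone in its genus — equivalently that $\theta_{[1,2,6]}$ and $\theta_{[1,6,18]}$ are Eisenstein series, hence $g$ is Eisenstein — and if there is a genus-mate, show instead that the cuspidal parts (Shimura lifts of weight-$2$ forms) cancel in the combination $3\,r_{[1,6,18]}-r_{[1,2,6]}$. Either way $C_{1^22^23^{-1}}(n)$ becomes exactly a product of local densities, i.e.\ a Hurwitz-class-number expression in $8n+1$. Writing $8n+1=3^am$ with $3\nmid m$: the $2$-adic factor is constant because $8n+1\equiv1\pmod8$; the prime-to-$6$ part contributes the genus character $\bigl(\tfrac m3\bigr)$; and the $3$-adic density depends only on $\min(a,3)$, which produces exactly the split $a=0,1,2,\ge3$ and the vanishing precisely at $a=1$, $m\equiv2\pmod3$. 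This gives the first three cases, and the case $a\ge3$ is just the assertion that the sign equals $1$ there: indeed $8\cdot\tfrac{n-1}3+1=3^{a-1}m-2\equiv1\pmod3$, so $\bigl(\tfrac{3^{a-1}m-2}{3}\bigr)=1$.

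I expect the main obstacle to be this last step, in two parts: proving that the cuspidal contribution to $C_{1^22^23^{-1}}(n)$ vanishes identically, so the formula is exact for \emph{all} $n$ and not merely $n$ large — this forces one to pin down $S_{3/2}$ for the ambient space, or to exhibit the cancellation of the non-Eisenstein parts of $r_{[1,6,18]}$ and $r_{[1,2,6]}$ directly — and carrying out the $2$- and $3$-adic local-density computation, tracking the anisotropic behavior at $3$ carefully enough to recover all four cases and the exact vanishing locus; the remaining bookkeeping is routine.
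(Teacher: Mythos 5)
Your elementary reduction is correct, and it is a genuinely different route from the paper's. Writing $\frac{\eta(z)^2\eta(2z)^2}{\eta(3z)}=\frac{\eta(2z)^2}{\eta(z)}\cdot P_3(z)$ and using the cubic theta identity $P_3=\frac12\left(3a(q^3)-a(q)\right)$ does give, after completing the square (and the parity constraints are indeed automatic for odd targets $\equiv 1\pmod 8$),
\[
C_{1^22^23^{-1}}(n)=\tfrac14\left(3\,r_{[1,6,18]}(8n+1)-r_{[1,2,6]}(8n+1)\right),
\]
which I have verified numerically. The paper instead proves, via the valence formula and a $192$-coefficient computer check, that $\frac{\eta(8z)^2\eta(16z)^2}{\eta(24z)}$ equals an explicit combination of the sieved class-number generating functions $\mathcal H_{1,3}$ and $\mathcal H_{4,3}$ (Lemma \ref{lem:etaHrel}), and then reads off all four cases from the elementary identity $H(Df^2)=H(D)S_D(f)$ and the multiplicativity of $S_D$. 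Both roads lead to exact class-number formulas for the coefficients, but the paper's makes the ``exactness'' a finite verification, whereas yours defers it to precisely the items you flag as open: you never establish that $[1,2,6]$ and $[1,6,18]$ are alone in their genera (or that the cuspidal parts cancel in the combination $3r_{[1,6,18]}-r_{[1,2,6]}$), nor do you carry out the $2$- and $3$-adic density computation. All of the content of the theorem --- the exact vanishing at $a=1$, $m\equiv 2\pmod 3$, and the strict inequality $3r_{[1,6,18]}(N)>r_{[1,2,6]}(N)$ needed whenever $\left(\frac m3\right)=1$ --- lives in those two unexecuted steps, so as written this is an outline rather than a proof.

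More seriously, your conclusion for $a\ge 3$ is false. Take $n=10$, so $8n+1=81=3^4$: one counts $r_{[1,2,6]}(81)=50$ and $r_{[1,6,18]}(81)=14$, whence $C_{1^22^23^{-1}}(10)=\frac14(42-50)=-2<0$, not positive. The paper's computation in this range gives $C(24n+9)=-6H(D)S_D(g)\left(1-\left(\frac{-D}{3}\right)\right)\le 0$, so for $a\ge3$ the sign is $-1$ or $0$ depending on $\left(\frac{-D}{3}\right)$, never $+1$; the actual recursion furnished by the ``independence of $r$'' in that computation relates $n$ to $\frac{n-1}{9}$ (which lowers $a$ by $2$ and preserves $m$), not to $\frac{n-1}{3}$. (The printed $\frac{n-1}{3}$ in the statement appears to be an error: taken literally it fails at $n=10$, since $\sgn(C_{1^22^23^{-1}}(3))=+1$ while $C_{1^22^23^{-1}}(10)=-2$.) Your argument that the recursion ``bottoms out'' at an $a=0$ instance with $m'\equiv1\pmod 3$, hence sign $+1$, takes the mis-stated recursion at face value and is contradicted by the data. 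Any correct treatment of $a\ge3$ in your framework has to track the $3$-adic densities of the two ternary forms through growing powers of $3$ and exhibit the stabilization that produces the period-$2$ recursion in $a$ --- which is exactly the part of the computation you deferred.
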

The paper is organized as follows. In Section \ref{sec:prelim}, we recall some basic facts on modular forms and Hurwitz class numbers. In Section \ref{sec:M=1}, we prove Theorem \ref{thm:M=1}. Section \ref{sec:M=2} is devoted to the proof of Theorem \ref{thm:132-233}. In Section \ref{sec:M=3} we show Theorem \ref{thm:14243-2}, in Section \ref{sec:M=8} we prove Theorem \ref{thm:M=8}, and in Section \ref{sec:conj99} we show \Cref{thm:QpPpgen},  Corollary \ref{cor:conj99}, and \Cref{thm:ClassNum}.

\section*{Acknowledgements}

\noindent We thank the referee for helpful comments. The first author has received funding from the European Research Council (ERC) under the European Union's Horizon 2020 research and innovation programme (grant agreement No. 101001179).
The research of the fourth author was supported by grants from the Research Grants Council of the Hong Kong SAR, China (project numbers HKU 17314122 and HKU 17305923).

\section{Preliminaries}\label{sec:prelim}

\subsection{Modular forms}\label{sec:modularforms}
We briefly recall the relevant definitions and some basic facts about modular forms and refer the reader to \cite{OnoBook} for details. As usual, for $d$ odd, we let
\[
\varepsilon_{d}:=\begin{cases} 1 &\text{if }d\equiv 1\pmod{4}\hspace{-1pt},\\ i&\text{if }d\equiv 3\pmod{4}\hspace{-1pt}.\end{cases}
\]
For $k\in\frac{1}{2}\Z$, $N\in\N$ ($4\mid N$ if $k\in \Z+\frac{1}{2}$), and a character $\chi \pmod N$, a function $f:\H\to\C$ satisfies \begin{it}modularity of weight $k$ on $\Gamma_0(N)$ with character $\chi$\end{it} if for all $\gamma=\left(\begin{smallmatrix}a&b\\ c&d\end{smallmatrix}\right)\in\Gamma_0(N)$
\[
f|_{k}\gamma  = \chi(d) f.
\]
Here the weight $k$ \begin{it}slash operator\end{it} is defined by
\[
f\big|_{k}\gamma(z):=
\begin{cases}
\left( \frac cd \right) \varepsilon_d^{2k}(cz+d)^{-k} f(\gamma z)&\text{if }k\in\Z+\frac{1}{2},\\
(cz+d)^{-k} f(\gamma z)&\text{if }k\in\Z.
\end{cases}
\]
We call $f:\H\to\C$ a \begin{it} holomorphic modular form of weight $k$ on $\Gamma_0(N)$ with character $\chi$\end{it} if $f$ is holomorphic on $\H$, satisfies modularity of weight $k$ on $\Gamma_0(N)$ with character $\chi$, and for every $\gamma=\left(\begin{smallmatrix}a&b\\ c&d\end{smallmatrix}\right)\in\SL_2(\Z)$, $(cz+d)^{-k}f(\gamma z)$ is bounded as $z\to i\infty$. We denote the space of such forms by $M_{k}\left(\Gamma_0(N),\chi\right)$. Modular forms for which $(cz+d)^{-k}f(\gamma z)$ vanishes as $z\to i\infty$ for all $\gamma\in\SL_2(\Z)$ are called \begin{it}cusp forms\end{it}. The corresponding subspace is denoted by $S_{k}(\Gamma_0(N),\chi)$. We drop $\chi$ from the notation if it is trivial. The \begin{it}Petersson inner product\end{it} between $f,g\in S_k(\Gamma_0(N),\chi)$ is defined by ($z=x+iy$)
\[
\left<f,g\right>:=\frac{1}{\left[\SL_2(\Z):\Gamma_0(N)\right]}\int_{\Gamma_0(N)\backslash\H} f(z)\overline{g(z)} y^k\frac{dx dy}{y^2}.
\]

In order to show identities between modular forms, we use the following consequence of the valence formula. 
\begin{lem}\label{lem:valence}
Let $k\in\frac{1}{2}\N$, $N\in\N$, and $\chi$ be a character $\pmod{N}$. If $f\in M_k(\Gamma_0(N),\chi)$ satisfies $c_f(n)=0$ for every $0\leq n\leq N\frac{k}{12}\prod_{p\mid N}(1+\frac{1}{p})$, then $f=0$.
\end{lem}

\subsection{Operators on (non-holomorphic) modular forms}
We recall the action of certain operators on non-holomorphic functions satisfying  modularity of weight $\kappa\in\frac{1}{2}\Z$ on some group $\Gamma_0(N)$ with some character. For $f(z)=\sum_{n\in\Z} c_{f,y}(n) q^n$ (we omit the dependence on $y$ if $f$ is holomorphic), we let
\begin{equation*}
 	f \mid U_\ell(z) := \sum_{n\in\Z} c_{f,\frac{y}{\ell}}(\ell n) q^n, \quad f \mid V_\ell(z) := f(\ell z).
\end{equation*}
For $M\in\N$ and $m\in\N_0$, we define the {\it sieving operator} 
\begin{equation*}
 	f \mid S_{M,m}(z) := \sum_{\substack{n\in\Z\\n\equiv m\pmod M}} c_{f,y}(n)q^n.
\end{equation*}
Moreover,  for a character $\psi$, we define the \begin{it}twist of $f$ by $\psi$\end{it} by
\[
f\otimes \psi(z):=\sum_{n\in\Z} \psi(n)c_{f,y}(n)q^n.
\]
To state the modular properties of these functions, we require the \begin{it}radical\end{it} $\operatorname{rad}(n) :=\prod_{p\mid n}p$ and the character $\chi_D(n):=(\frac {D}{n})$. Recall that the \begin{it}conductor\end{it} of a character $\chi$ (mod $N$) is the smallest $M\in\N$ such that for all $n\in\Z$ with $\gcd(n,N)=1$,  we have $\chi(n+M)=\chi(n)$. The following properties are well-known; see, for example, \cite[Lemma 2.3]{BK} for a proof.
\begin{lem}\label{lem:operatorshalf}
Suppose that $f:\H\to\C$ satisfies modularity of weight $k\in\Z+\frac{1}{2}$ on $\Gamma_0(N)$ $(4 \mid N)$ with character $\chi$ of conductor $N_{\chi}\mid N$.
\noindent

\begin{enumerate}[leftmargin=*, label=\rm(\arabic*)]
\item
For $\delta\in\N$, $f|U_{\delta}$ satisfies modularity of weight $k$ on $\Gamma_0(4\lcm(\frac{N}{4},\operatorname{rad}(\delta)))$ with character $\chi\chi_{4\delta}$.

\item
Suppose that $M\mid 24$ and $M\not\equiv 2\pmod{4}$. Then $f|S_{M,m}$ satisfies modularity of weight $k$ on $\Gamma_{0}(\lcm(N,M^2,MN_{\chi}))$ with character $\chi$.

\item
For $\delta\in\N$, $f|V_{\delta}$ satisfies modularity of weight $k$ on   $\Gamma_0(N\delta)$ with character $\chi\chi_{4\delta}$.
\end{enumerate}
\end{lem}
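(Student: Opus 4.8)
The three assertions are statements about the automorphy factor alone: the lemma only claims that $f|U_\delta$, $f|S_{M,m}$, and $f|V_\delta$ \emph{satisfy modularity} (in the sense defined above, $g|_k\gamma=\psi(d)g$ for $\gamma\in\Gamma_0(N')$), not that they are holomorphic modular forms. Hence no growth or cusp conditions enter, and it suffices to verify the relevant transformation law in each case. The unifying strategy is to write each operator as a finite linear combination of slash operators and then, for a matrix $\gamma=\left(\begin{smallmatrix}a&b\\c&d\end{smallmatrix}\right)$ in the claimed group, to exhibit a coset rearrangement that reduces the computation to the modularity of $f$ on $\Gamma_0(N)$. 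The entire difficulty is the bookkeeping of the half-integral-weight multiplier $\left(\frac{c}{d}\right)\varepsilon_d^{2k}$, and the hypotheses $4\mid N$, $M\mid 24$, $M\not\equiv 2\pmod 4$ are precisely what is consumed to make this multiplier match up after the rearrangement.

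I would begin with (3), which serves as the template. For $\gamma\in\Gamma_0(N\delta)$, writing $w=\delta z$ one has $\delta\gamma z=\gamma' w$ with $\gamma'=\left(\begin{smallmatrix}a&b\delta\\c/\delta&d\end{smallmatrix}\right)\in\Gamma_0(N)$ and $\frac{c}{\delta}w+d=cz+d$. Applying the modularity of $f$ to $\gamma'$ and unwinding the definition of the slash operator, the $\varepsilon_d^{2k}$ factors cancel and the Legendre symbols collapse via $\left(\frac{c}{d}\right)\left(\frac{c/\delta}{d}\right)=\left(\frac{\delta}{d}\right)=\chi_{4\delta}(d)$ (using $\left(\frac{4}{d}\right)=1$ and $\gcd(c,d)=1$, so $\gcd(c/\delta,d)=1$). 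This produces exactly the character $\chi\chi_{4\delta}$ on $\Gamma_0(N\delta)$.

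For (1) I would use the coset decomposition $f|U_\delta=\frac{1}{\delta}\sum_{j=0}^{\delta-1} f|_k\left(\begin{smallmatrix}1&j\\0&\delta\end{smallmatrix}\right)$ together with the multiplicativity $U_{\delta_1\delta_2}=U_{\delta_1}U_{\delta_2}$ on Fourier coefficients. This reduces the claim to $\delta=p$ prime and explains why only $\operatorname{rad}(\delta)$, not $\delta$, appears in the level: iterating $U_p$ does not enlarge $\Gamma_0(4\lcm(N/4,p))$ further, while the character twists multiply to $\prod_p\chi_{4p}^{a_p}=\chi_{4\delta}$. For fixed $\gamma\in\Gamma_0(4\lcm(N/4,\operatorname{rad}(\delta)))$ one solves $\left(\begin{smallmatrix}1&j\\0&\delta\end{smallmatrix}\right)\gamma=\gamma_j\left(\begin{smallmatrix}1&j'\\0&\delta\end{smallmatrix}\right)$ with $\gamma_j\in\Gamma_0(N)$ and $j\mapsto j'$ a permutation of the residues, then tracks the theta multiplier through the scaling, the extra symbol $\chi_{4\delta}$ arising from the same reciprocity manipulation as in (3). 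For (2) I would use the additive decomposition $f|S_{M,m}=\frac{1}{M}\sum_{j=0}^{M-1}e^{-2\pi i jm/M} f|_k\left(\begin{smallmatrix}1&j/M\\0&1\end{smallmatrix}\right)$. The conjugation $\left(\begin{smallmatrix}1&j/M\\0&1\end{smallmatrix}\right)\gamma=\gamma'\left(\begin{smallmatrix}1&j'/M\\0&1\end{smallmatrix}\right)$ forces $j'\equiv d^2 j\pmod M$, with $\gamma'=\left(\begin{smallmatrix}a'&b'\\c&d'\end{smallmatrix}\right)$: divisibility $M^2\mid c$ makes the entries integral and gives $d'\equiv d\pmod M$, divisibility $MN_\chi\mid c$ gives $\chi(d')=\chi(d)$, and $N\mid c$ gives $\gamma'\in\Gamma_0(N)$. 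Here is where $M\mid 24$ enters: $(\Z/M)^\times$ then has exponent $2$, so $d^2\equiv 1\pmod M$, hence $j'\equiv j\pmod M$, the phases $e^{-2\pi i jm/M}$ are preserved, and the sieving residue class—and therefore the character, with no twist—is unchanged.

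The main obstacle in all three parts is the final step: showing the half-integral-weight multiplier of $\gamma'$ coincides with that of $\gamma$, i.e. $\left(\frac{c}{d'}\right)\varepsilon_{d'}^{2k}=\left(\frac{c}{d}\right)\varepsilon_{d}^{2k}$, where $d'=d-cj'/M$ in the sieving case (and analogously for $U_\delta$). Since $\varepsilon_{d'}$ depends only on $d'\bmod 4$, equality of the $\varepsilon$-factors requires $4\mid cj'/M$; because $4\mid N\mid c$, this holds exactly when $M\not\equiv 2\pmod 4$ (if $M\equiv 2\pmod 4$ then $cj'/M$ is only guaranteed divisible by $M\equiv 2\pmod 4$, and $\varepsilon_{d'}\ne\varepsilon_d$ can occur). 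For the Legendre symbols one applies quadratic reciprocity to compare $\left(\frac{c}{d'}\right)$ with $\left(\frac{c}{d}\right)$, the residual symbols modulo $M$ and modulo $4$ being trivialized by $M\mid 24$, $M\not\equiv 2\pmod 4$, and $4\mid N$. Once this multiplier identity is in hand, summing over the permuted cosets reproduces the operator and reads off the claimed level and character in each case.
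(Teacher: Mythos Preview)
The paper does not actually prove this lemma: it states that the properties are well-known and points to \cite[Lemma~2.3]{BK} for a proof. Your outline is exactly the standard coset-rearrangement argument that such a reference contains, and your diagnosis of where each hypothesis is consumed is correct: $M\mid 24$ forces $d^{2}\equiv 1\pmod{M}$ so that $j'\equiv j\pmod{M}$ and the sieving class is preserved; $M\not\equiv 2\pmod{4}$ together with $4\mid N$ forces $d'\equiv d\pmod{4}$ so that $\varepsilon_{d'}=\varepsilon_{d}$; and in (1) and (3) the leftover Kronecker symbol $\left(\tfrac{\delta}{d}\right)$ is exactly the twist $\chi_{4\delta}$.

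The one place your sketch is a little thin is the equality $\left(\tfrac{c}{d'}\right)=\left(\tfrac{c}{d}\right)$ in part~(2). Since $d'-d=-cj'/M$ is only a multiple of $c/M$, not of $c$ itself, one cannot invoke periodicity of $\left(\tfrac{c}{\cdot}\right)$ modulo $c$ directly; writing $c=Mc_{1}$ (so $4\mid c_{1}$ by the level conditions) and treating $\left(\tfrac{M}{\cdot}\right)$ and $\left(\tfrac{c_{1}}{\cdot}\right)$ separately, or else invoking the cocycle relation for the theta multiplier on the metaplectic cover, closes this cleanly. This is routine and does not affect the validity of your approach.
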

The following lemma may be shown similarly to \Cref{lem:operatorshalf}.
\begin{lem}\label{lem:operators}
Let $N\in\N$, $\chi$ be a character $\pmod{N}$ with conductor $N_{\chi}\mid N$, and $k\in\N$. Suppose that $f$ satisfies modularity of weight $k$ on $\Gamma_{0}(N)$ with character $\chi$.
	\begin{enumerate}[leftmargin=*,label=\rm(\arabic*)]
		\item For  $\d\in\N$, $f|V_{\d}$ satisfies modularity of weight $k$ on $\Gamma_{0}(N\d)$ with character $\chi$.
		\item If $M\in\N$ with $M\mid 24$ and $m\in\Z$, then $f|S_{M,m}$ satisfies modularity of weight $k$ on
		\noindent$\Gamma_{0}(\lcm(N,M^2,MN_{\chi}))$ with character $\chi$.
		\item For $M\in\N$ and a character $\psi\pmod M$, the twist $f\otimes \psi$ satisfies modularity of weight $k$ for $\Gamma_{0}(\lcm(N,M^2,MN_{\chi}))$ with character $\chi\psi^2$.
		\item If $\delta\mid N$, then $f|U_{\delta}$ satisfies modularity of weight $k$ on $\Gamma_0(N)$ with character $\chi$. 
	\end{enumerate}
\end{lem}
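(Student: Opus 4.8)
The plan is to treat each of the four parts by writing the corresponding operator as a finite linear combination of slash operators $f|_k A$ for explicit matrices $A\in\GL_2^+(\Q)$, and then to reduce the required modularity to an elementary coset rearrangement. Because $k\in\N$, the integral-weight slash operator is an honest (multiplier-free) right action of $\GL_2^+(\Q)$, so none of the factors $\varepsilon_d$ or $\left(\frac{c}{d}\right)$ that complicate \Cref{lem:operatorshalf} appear; this is precisely why the argument of \cite[Lemma 2.3]{BK} specializes, and in fact simplifies, to the present setting. For $\gamma=\begin{pmatrix} a & b \\ c & d \end{pmatrix}$ in the target group we will, in each case, produce for every representative matrix $A_j$ a matrix $\gamma_j\in\Gamma_0(N)$ and another representative $A_{j'}$ with $A_j\gamma=\gamma_j A_{j'}$, and then read off the character by inspecting the lower-right entry of $\gamma_j$.

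For part (1) the operator $V_\delta$ is, up to a nonzero scalar, $f|_k\begin{pmatrix} \delta & 0 \\ 0 & 1 \end{pmatrix}$. Conjugating $\gamma\in\Gamma_0(N\delta)$ gives $\begin{pmatrix} \delta & 0 \\ 0 & 1 \end{pmatrix}\gamma\begin{pmatrix} \delta & 0 \\ 0 & 1 \end{pmatrix}^{-1}=\begin{pmatrix} a & \delta b \\ c/\delta & d \end{pmatrix}$, which lies in $\Gamma_0(N)$ since $N\delta\mid c$ forces $N\mid c/\delta$, and which has the same lower-right entry $d$; hence $f|V_\delta$ transforms with $\chi(d)$, giving weight $k$ modularity on $\Gamma_0(N\delta)$ with character $\chi$. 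For part (4) I use that $f|U_\delta$ is, up to a nonzero scalar, $\sum_{j\bmod\delta} f|_k\begin{pmatrix} 1 & j \\ 0 & \delta \end{pmatrix}$. Given $\gamma\in\Gamma_0(N)$ with $\delta\mid N$, a direct check produces $\gamma_j\in\Gamma_0(N)$ and $j'$ with $\begin{pmatrix} 1 & j \\ 0 & \delta \end{pmatrix}\gamma=\gamma_j\begin{pmatrix} 1 & j' \\ 0 & \delta \end{pmatrix}$: the lower-left entry of $\gamma_j$ is $\delta c$, divisible by $N$ because $N\mid c$, and its lower-right entry $d'=d-cj'$ satisfies $d'\equiv d\pmod N$, again since $N\mid c$, so $\chi(d')=\chi(d)$; as $j$ runs over residues mod $\delta$ so does $j'$, and reassembling the sum yields modularity on $\Gamma_0(N)$ with character $\chi$.

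For parts (2) and (3) I expand both operators in additive characters, writing
\[
f|S_{M,m}=\frac{1}{M}\sum_{j\bmod M} e^{-2\pi i jm/M}\, f\!\left(z+\tfrac{j}{M}\right),
\]
and, via Gauss sums, expressing the twist $f\otimes\psi$ as an analogous combination of the translates $f(z+j/M)=f|_k\begin{pmatrix} 1 & j/M \\ 0 & 1 \end{pmatrix}$. Conjugating $\gamma$ in the target level $\Gamma_0(\lcm(N,M^2,MN_{\chi}))$ by $\begin{pmatrix} 1 & j/M \\ 0 & 1 \end{pmatrix}$ produces a matrix in $\Gamma_0(N)$: the $M^2$ factor forces $M^2\mid c$, so the off-diagonal denominators clear to integers, and the $MN_{\chi}$ factor forces $N_{\chi}\mid c/M$, so the new lower-right entry $d-cj/M$ is congruent to $d$ modulo $N_{\chi}$. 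The action permutes the translation index $j\mapsto j'$ in a way governed by $d$. Reassembling the finite sum then returns $f|S_{M,m}$ with the multiplier $\chi(d)$ for sieving, and returns $f\otimes\psi$ with multiplier $\chi(d)\psi(d)^2$ for twisting, which is the asserted character $\chi\psi^2$.

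The main obstacle is the bookkeeping in part (3): pinning down the character as exactly $\chi\psi^2$ rather than merely $\chi$. The two factors of $\psi(d)$ arise from distinct sources — one from the Gauss-sum normalization of the twist and one from the $d$-dependent permutation of the translates $j\mapsto j'$ — and these must be matched carefully, together with the verification that $\lcm(N,M^2,MN_{\chi})$ is large enough for every conjugated matrix to remain integral, to lie in $\Gamma_0(N)$, and to have lower-right entry in the correct residue class modulo $N_{\chi}$. Once these integrality and residue conditions are confirmed, part (2) is governed by the same coset computation but with the additive phases $e^{-2\pi i jm/M}$ in place of the Gauss-sum weights; there the index permutation contributes no extra character factor, leaving the multiplier $\chi$, and all four statements reduce to a single coset calculation with the metaplectic factors of \Cref{lem:operatorshalf} simply omitted.
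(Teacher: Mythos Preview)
Your approach is exactly what the paper has in mind: the paper gives no argument beyond the remark that the lemma ``may be shown similarly to \Cref{lem:operatorshalf},'' and the coset rearrangements you outline are the standard way to carry that out in the (simpler) integral-weight setting. Parts (1), (3), and (4) are handled correctly in your sketch.

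There is, however, a genuine gap in your treatment of part (2). You assert that for the sieving operator ``the index permutation contributes no extra character factor, leaving the multiplier $\chi$,'' but you never explain why, and this is precisely where the hypothesis $M\mid 24$ is used. Tracking your own computation, the conjugation $\begin{pmatrix}1&j/M\\0&1\end{pmatrix}\gamma=\gamma'\begin{pmatrix}1&j'/M\\0&1\end{pmatrix}$ forces $j'\equiv jd^{2}\pmod{M}$; substituting back into the additive phase gives $e^{-2\pi i jm/M}=e^{-2\pi i j' d^{-2} m/M}$, so after reindexing the sum one obtains $f|S_{M,\,d^{-2}m}$ rather than $f|S_{M,m}$. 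These coincide for every $\gamma$ in the target group only because $M\mid 24$ forces $(\Z/M\Z)^{\times}$ to have exponent at most $2$, hence $d^{2}\equiv 1\pmod{M}$. Without invoking this, the argument does not close, and in fact the statement is false for general $M$ (for instance $M=5$). You should insert this one-line observation where you claim the permutation contributes nothing; once that is done, your proof is complete and matches the intended argument.
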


\subsection{Eisenstein series}
For $k\in\N$ with $k\geq 2$ and primitive characters $\chi,\psi$, we define the {\it Eisenstein series}
	\begin{equation}\label{eqn:Ekchipsi}
	E_{k,\chi,\psi}(z):=\delta_{\chi=\chi_1}L\left(1-k,\psi\right)+2\sum_{n\geq1}\sum_{d\mid n}\chi\left(\frac{n}{d}\right)\psi(d) d^{k-1} q^n,
	\end{equation}
where the \begin{it}$L$-function\end{it} for the character $\psi$ is defined for $\re(s)>1$ by $L(s,\psi):=\sum_{n\geq 1} \frac{\psi(n)}{n^s}$. This function admits a meromorphic continuation to $s\in\C$. The modular properties of these Eisenstein series may be found in \cite[Theorem 4.5.2]{DiamondShurman} and \cite[Theorem 4.6.2]{DiamondShurman}.
\begin{lem}\label{lem:Ekchipsi}
Suppose that $\chi$ and $\psi$ are primitive characters of conductors $N_{\chi}$ and $N_{\psi}$, respectively. If $k>2$ or ($k=2$ and either $\chi$ or $\psi$ is non-trivial), then $E_{k,\chi,\psi}\in M_k\left(\Gamma_0\left(N_{\chi}N_{\psi}\right),\chi\psi\right)$.
\end{lem}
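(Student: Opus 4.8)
The plan is to realize $E_{k,\chi,\psi}$ as a character-twisted lattice Eisenstein series and to match its Fourier expansion with \eqref{eqn:Ekchipsi}, following the construction in \cite[Chapter 4]{DiamondShurman}. For $k\geq3$ I would start from the absolutely convergent sum
\[
G_{k,\chi,\psi}(z):=\sum_{\substack{(c,d)\in\Z^2\\(c,d)\neq(0,0)}}\frac{\chi(c)\,\overline{\psi}(d)}{(cz+d)^{k}},
\]
where $\chi(c)$ and $\overline{\psi}(d)$ are read modulo $N_\chi$ and $N_\psi$ (the conjugate on $\psi$ is dictated by the Gauss-sum step below, and the conventions are ultimately fixed by requiring agreement with \eqref{eqn:Ekchipsi}). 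Weight-$k$ modularity on $\Gamma_0(N_\chi N_\psi)$ follows from the cocycle identity $c(\gamma z)+d=(c'z+d')(\gamma_1 z+\gamma_2)^{-1}$ with $(c',d')=(c,d)\gamma$, valid for $\gamma=\left(\begin{smallmatrix}a&b\\ \gamma_1&\gamma_2\end{smallmatrix}\right)\in\Gamma_0(N_\chi N_\psi)$: since $N_\chi N_\psi\mid\gamma_1$, reindexing the lattice by $(c,d)\mapsto(c,d)\gamma$ multiplies each summand by a scalar depending only on $\gamma_2\bmod N_\chi N_\psi$, which one checks equals $(\chi\psi)(\gamma_2)$, so that $G_{k,\chi,\psi}\big|_k\gamma=(\chi\psi)(\gamma_2)\,G_{k,\chi,\psi}$. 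A parity remark enters here: replacing $(c,d)$ by $(-c,-d)$ scales the summand by $\chi(-1)\psi(-1)(-1)^k$, so the series vanishes identically unless $\chi(-1)\psi(-1)=(-1)^k$; the identification below is carried out in this nonvanishing regime.

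Next I would compute the $q$-expansion and match constants. The terms with $c=0$ contribute only when $\chi$ is trivial and produce, after evaluating $\sum_{d\neq0}\overline{\psi}(d)d^{-k}$ through the functional equation of the Dirichlet $L$-function, the constant term $\delta_{\chi=\chi_1}L(1-k,\psi)$ up to the overall normalization. For each fixed $c\neq0$ I would apply the Lipschitz summation formula
\[
\sum_{d\in\Z}\frac{1}{(z+d)^{k}}=\frac{(-2\pi i)^{k}}{(k-1)!}\sum_{m\geq1}m^{k-1}q^{m}\qquad(k\geq2,\ \im(z)>0)
\]
to the inner sum over $d$, then resum over $c$ using the finite Fourier expansion of $\psi$ (Gauss sums for the primitive character $\psi$) to collapse the double sum into $\sum_{n\geq1}\big(\sum_{d\mid n}\chi(n/d)\psi(d)d^{k-1}\big)q^{n}$. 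Tracking the normalizing constant $\tfrac{(-2\pi i)^k g(\psi)}{N_\psi^k (k-1)!}$ then identifies $E_{k,\chi,\psi}$ with a constant multiple of $G_{k,\chi,\psi}$, giving $E_{k,\chi,\psi}\in M_k(\Gamma_0(N_\chi N_\psi),\chi\psi)$ for all $k\geq3$.

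The main obstacle is the weight $k=2$ case, where the double sum no longer converges absolutely and holomorphy is not automatic. Here I would use Hecke's trick: insert a convergence factor $|cz+d|^{-2s}$, prove meromorphic continuation in $s$ with holomorphy at $s=0$, and pass to the limit. The resulting function is weight-$2$ modular but a priori carries one non-holomorphic correction term proportional to $y^{-1}$ whose coefficient is a complete sum of character values; this term vanishes precisely when $\chi$ or $\psi$ is nontrivial, since then the sum of that character over a full period is $0$. This is exactly the hypothesis ``$k=2$ and $\chi$ or $\psi$ nontrivial,'' and verifying the vanishing of the correction is the delicate point of the argument.

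Finally, to upgrade the transformation law to membership in $M_k$ I would verify holomorphy at every cusp. Expanding $G_{k,\chi,\psi}\big|_k\gamma$ at each cusp by the same Lipschitz/Gauss-sum analysis shows that its expansion contains no terms $q^{n}$ with $n<0$ and only a finite constant term, so $(cz+d)^{-k}E_{k,\chi,\psi}(\gamma z)$ stays bounded as $z\to i\infty$ for every $\gamma\in\SL_2(\Z)$. Together with the transformation law this yields $E_{k,\chi,\psi}\in M_k(\Gamma_0(N_\chi N_\psi),\chi\psi)$ under the stated hypotheses.
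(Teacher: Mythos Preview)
Your sketch is the standard construction of the Eisenstein series $E_{k,\chi,\psi}$ as carried out in \cite[\S4.5--4.6]{DiamondShurman}, and it is correct in outline. Note, however, that the paper does not prove this lemma at all: it simply records the statement and cites \cite[Theorems 4.5.2 and 4.6.2]{DiamondShurman} as the source, so there is no ``paper's own proof'' to compare against---you have reproduced the argument behind the citation rather than diverged from it.
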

For $k,N\in\N$ and a character $\varrho$ (mod $N$), we define the \begin{it}Eisenstein series subspace\end{it} of $M_k(\Gamma_0(N),\varrho)$ to be the space spanned by $E_{k,\chi,\psi}|V_d$, where $\chi$ and $\psi$ are characters satisfying $\chi\psi=\varrho$ and $d\in\N$ satisfies $N_{\chi}N_{\psi}d \mid N$. Here $\chi\psi=\varrho$ means that they agree as characters (mod $N$). Every $f\in M_{k}(\Gamma_0(N),\varrho)$ can be written uniquely as $f=E+g$ with $E$ contained in the Eisenstein series subspace and $g$ a cusp form. We call $E$ the \begin{it}Eisenstein series part\end{it} of $f$ and $g$ the \begin{it}cuspidal part\end{it} of $f$. If $\chi=\chi_1$ and $\psi=\chi_{(\frac{-1}{p})p}$ for an odd prime $p$, then we normalize the Eisenstein series by multiplying by
\[
L_{k,p}:=\frac{1}{L\left(1-k,\chi_{\left(\frac{-1}{p}\right)p}\right)}.
\]
Using the Euler--Maclaurin summation formula (see \cite[(44)]{ZagierMellin}), one can determine the behavior of the Eisenstein series at $0$.
\begin{lem}\label{lem:Eisensteinp-growth}
Suppose that $k\geq 2$ and $p$ is an odd prime. Then $L_{k,p} E_{k,\chi_1,\chi_{(\frac{-1}{p})p}}$ has constant term $1$ at $i\infty$ and vanishes at $0$.
\end{lem}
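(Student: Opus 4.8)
The plan is to prove \Cref{lem:Eisensteinp-growth} by explicitly tracking the constant term of $L_{k,p}E_{k,\chi_1,\chi_{(\frac{-1}{p})p}}$ at each of the two cusps of $\Gamma_0(p)$, namely $i\infty$ and $0$. The statement at $i\infty$ is essentially immediate: from \eqref{eqn:Ekchipsi} with $\chi=\chi_1$ the constant Fourier coefficient is $L(1-k,\psi)$ with $\psi=\chi_{(\frac{-1}{p})p}$, so multiplying by $L_{k,p}=1/L(1-k,\chi_{(\frac{-1}{p})p})$ normalizes it to $1$; one only needs that $L(1-k,\psi)\neq 0$, which holds because $\psi$ is a nontrivial primitive character of conductor $p$ (so $\psi(-1)=(-1)^k$ by the choice of sign, ensuring $1-k$ is not a trivial zero), and the special value $L(1-k,\psi)=-B_{k,\psi}/k$ is a nonzero generalized Bernoulli number. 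The real content is the vanishing at the cusp $0$.

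For the cusp $0$ I would apply the Fricke involution $W_p$ (i.e. the slash of $z\mapsto -1/(pz)$) to $E_{k,\chi_1,\psi}$ and read off its constant term there. The cleanest route is to use the known transformation law for Eisenstein series under $W_p$: for $\chi,\psi$ primitive of conductors $N_\chi,N_\psi$ with $N_\chi N_\psi = p$, the form $E_{k,\chi,\psi}$ is taken by the Fricke involution (up to an explicit nonzero scalar involving Gauss sums and powers of $i$ and $p$) to $E_{k,\psi,\chi}$. In our case $\chi=\chi_1$ has conductor $1$ and $\psi$ has conductor $p$, so $W_p$ sends $E_{k,\chi_1,\psi}$ to a multiple of $E_{k,\psi,\chi_1}$. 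But $E_{k,\psi,\chi_1}$ has vanishing constant term, since the $\delta_{\chi=\chi_1}$ factor in \eqref{eqn:Ekchipsi} forces the constant term of $E_{k,\psi,\chi_1}$ to be $0$ whenever the left character $\psi$ is nontrivial. Hence the constant term of $E_{k,\chi_1,\psi}$ at the cusp $0$ is zero, and this persists after multiplying by $L_{k,p}$.

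Concretely, the steps are: (1) record the Fourier expansion and constant term at $i\infty$ from \eqref{eqn:Ekchipsi}, verify $L(1-k,\psi)\neq 0$ via the nonvanishing of $B_{k,\psi}$ for a primitive character with $\psi(-1)=(-1)^k$, and normalize; (2) cite or quote the Euler--Maclaurin/Fricke transformation formula for $E_{k,\chi,\psi}$ — the reference \cite[(44)]{ZagierMellin} gives exactly the asymptotic expansion near $0$ needed here, alternatively one uses the functional equation of the associated Dirichlet $L$-functions — to express $E_{k,\chi_1,\psi}\big|_k W_p$ as a nonzero constant times $E_{k,\psi,\chi_1}$; (3) observe that $E_{k,\psi,\chi_1}$ has constant term $0$ because its ``left'' character $\psi$ is nontrivial, so the $\delta$-term drops out; (4) conclude that the constant term at the cusp $0$ of the normalized Eisenstein series is $0$. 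The main obstacle is bookkeeping in step (2): getting the precise scalar (Gauss sum $\tau(\psi)$, the factor $p^{-k/2}$ or $p^{k/2-1}$, and the power of $i$) correct, and making sure the width of the cusp $0$ for $\Gamma_0(p)$ is accounted for so that ``constant term at $0$'' is interpreted via the correct local parameter; but since we only need the vanishing statement and the value $1$ at $i\infty$, the nonzero scalar in step (2) is irrelevant to the conclusion, which is what makes the argument go through cleanly.
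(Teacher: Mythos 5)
Your proposal is correct, but it takes a different route from the one the paper indicates: the paper disposes of the behaviour at $0$ with a one-line appeal to the Euler--Maclaurin summation formula \cite[(44)]{ZagierMellin} (i.e.\ the asymptotic expansion of the $q$-series as $z\to 0$ along the imaginary axis), whereas you argue via the Fricke involution $W_p$ and the standard fact that $E_{k,\chi,\psi}\big|_k W_{N_\chi N_\psi}$ is a nonzero multiple of $E_{k,\psi,\chi}$, whose constant term vanishes because its ``left'' character is nontrivial. Your route is arguably the more structural one --- it needs no asymptotic analysis and only the qualitative statement that the Fricke scalar is nonzero, which, as you note, makes the Gauss-sum bookkeeping irrelevant --- while the Euler--Maclaurin route is more self-contained and is the one the authors reuse elsewhere. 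Your treatment of the cusp $i\infty$ matches the paper's implicit normalization. One small point deserves sharpening: your parenthetical claim that $\psi(-1)=(-1)^k$ ``by the choice of sign'' is not true for arbitrary $k\geq 2$; since $\psi(-1)=\left(\frac{-1}{p}\right)=(-1)^{(p-1)/2}$, the parity condition holds exactly when $k\equiv \frac{p-1}{2}\pmod 2$, and for the opposite parity $L(1-k,\psi)=0$, so $L_{k,p}$ is undefined and the lemma is vacuous. This is an imprecision already present in the statement of the lemma (which is only ever invoked with $k\in\{k_p,\,pk_p\}$, both of the correct parity), so it does not affect the validity of your argument, but the justification of the nonvanishing should cite the parity hypothesis rather than assert it unconditionally.
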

In addition to the holomorphic Eisenstein series, we also require the quasimodular Eisenstein series of weight $2$, which, for $\sigma(n):=\sum_{d\mid n} d$, is defined by
\begin{equation*}
E_2(z):=1-24\sum_{n\geq1}\sigma(n)q^n.
\end{equation*}
The Eisenstein series $E_2$ is not modular, but it has a natural ``modular completion''
\[
\widehat{E}_2(z):=E_2(z)-\frac{3}{\pi y}.
\]
Although $\widehat{E}_2$ is not holomorphic, it satisfies, for all $\gamma\in\SL_2(\Z)$,
\[
\widehat{E}_2|_{2}\gamma = \widehat{E}_2.
\]

\subsection{Hecke operators}
In this subsection, we restrict to integral weight modular forms. For $N,k\in\N$, $\chi$ a character (mod $N$), and $p$ a prime, we define the \begin{it}Hecke operator\end{it} $T_p$ acting on $f\in M_{k}(\Gamma_0(N),\chi)$ by  (see \cite[Definition 2.1]{OnoBook})
\[
f\big|T_p(z):=\sum_{n\geq0}\left(c_f(pn)+\chi(p)p^{k-1}c_f\left(\frac{n}{p}\right)\right)q^n,
\]
where $c_f(\alpha):=0$ for $\alpha\in\Q^{+}\setminus\N_0$. There exists a natural basis of cusp forms which are simultaneous eigenfunctions under all Hecke operators $T_p$ with $p\nmid N$. We call these simultaneous eigenfunctions \begin{it}Hecke eigenforms\end{it}. If $f\in M_{k}(\Gamma_0(N),\chi)$ is a Hecke eigenform, then $f|V_d\in M_{k}(\Gamma_0(Nd),\chi)$ is also a Hecke eigenform. For $M\in\N$, the subspace of $M_{k}(\Gamma_0(M),\chi)$ spanned by $f|V_d$ with $f\in M_{k}(\Gamma_0(N),\chi)$ for $1\leq N<M$ is the \begin{it}old space\end{it}; the orthogonal complement of the old space is the \begin{it}new space\end{it}. Hecke eigenforms in the new spaces are called \begin{it}newforms\end{it}. We normalize the Fourier expansions so that $c_f(1)=1$. Letting $d(n)$ denote the number of divisors of $n$, a result of Deligne \cite{Deligne} gives an explicit bound on $c_f(n)$.

\begin{thm}\label{lem:Deligne}(Deligne)
Suppose that $k$, $N\in\N$, $\chi$ is a character $\pmod N$, and $f\in S_{k}(\Gamma_0(N),\chi)$ is a normalized newform. Then
	\begin{equation*}
		\left|c_f(n)\right| \le d(n) n^{\frac{k-1}2}.
	\end{equation*}
\end{thm}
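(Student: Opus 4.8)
The plan is to reduce the bound to a purely local statement at each prime and then feed in the deep Ramanujan--Petersson estimate at the good primes. Since $f$ is a normalized newform, Atkin--Lehner theory makes it a simultaneous eigenform for all $T_p$ with $p\nmid N$ and all $U_p$ with $p\mid N$, and the normalization $c_f(1)=1$ forces the Fourier coefficients to be multiplicative, i.e. $c_f(mn)=c_f(m)c_f(n)$ whenever $\gcd(m,n)=1$. Because $d(n)n^{\frac{k-1}{2}}$ is itself multiplicative (as $d$ is multiplicative and $n\mapsto n^{\frac{k-1}{2}}$ is completely multiplicative), it suffices to establish the estimate $|c_f(p^r)|\le(r+1)p^{r\frac{k-1}{2}}$ for every prime power $p^r$ and then take the product over the prime factorization of $n$.

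To handle a prime power, I would use the eigenform relation $f|T_p=c_f(p)f$ (respectively $f|U_p=c_f(p)f$), which upon comparing Fourier coefficients with the definition of $T_p$ yields the Hecke recursion
\[
c_f(p^{r+1})=c_f(p)\,c_f(p^r)-\chi(p)\,p^{k-1}c_f(p^{r-1}),
\]
valid for all primes $p$. For a good prime $p\nmid N$ I would factor the associated Hecke polynomial $X^2-c_f(p)X+\chi(p)p^{k-1}=(X-\alpha_p)(X-\beta_p)$, so that $\alpha_p\beta_p=\chi(p)p^{k-1}$, and solving the recursion gives the closed form $c_f(p^r)=\sum_{j=0}^{r}\alpha_p^{\,j}\beta_p^{\,r-j}$. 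The crucial input is the Ramanujan--Petersson bound $|\alpha_p|=|\beta_p|=p^{\frac{k-1}{2}}$; granting this, the triangle inequality immediately gives $|c_f(p^r)|\le(r+1)p^{r\frac{k-1}{2}}=d(p^r)(p^r)^{\frac{k-1}{2}}$.

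For a bad prime $p\mid N$ the character $\chi$ vanishes at $p$, so the recursion collapses to $c_f(p^r)=c_f(p)^r$. Here Atkin--Lehner theory supplies the local bound $|c_f(p)|\le p^{\frac{k-1}{2}}$ (with $c_f(p)=0$ when $p^2\mid N$ and $|c_f(p)|=p^{\frac{k-2}{2}}$ when $p\,\|\,N$ and $p\nmid N_\chi$), whence $|c_f(p^r)|=|c_f(p)|^r\le p^{r\frac{k-1}{2}}\le(r+1)p^{r\frac{k-1}{2}}$, so the local estimate holds at every prime. Combining all prime powers via multiplicativity then yields the claim.

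The main obstacle is entirely contained in the Ramanujan--Petersson estimate $|\alpha_p|=|\beta_p|=p^{\frac{k-1}{2}}$ at the good primes; everything else is elementary bookkeeping with the Hecke recursion. This estimate is precisely the content of Deligne's theorem: via the Eichler--Shimura relation one identifies $c_f(p)$ with the trace of the Frobenius at $p$ acting on an \'etale cohomology group of a Kuga--Sato variety, and the desired archimedean bound on the Frobenius eigenvalues is then a consequence of Deligne's proof of the Weil conjectures. Reproducing that argument is well beyond a short proof, so in practice one cites \cite{Deligne}; the steps above are what reduce the stated divisor bound to that single deep input.
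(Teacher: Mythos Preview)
The paper does not prove this statement at all; it is simply stated as Deligne's theorem with a citation to \cite{Deligne}, and then used as a black box. Your proposal is therefore strictly more detailed than the paper's treatment: you give the standard (and correct) reduction of the divisor bound to the Ramanujan--Petersson estimate $|\alpha_p|=|\beta_p|=p^{(k-1)/2}$ at the unramified primes, together with the Atkin--Lehner--Li control at the ramified primes, and then cite \cite{Deligne} for the deep input. That is exactly how one would justify the statement if pressed.

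One small inaccuracy in your parenthetical remark on bad primes: it is not true in general that $c_f(p)=0$ whenever $p^2\mid N$. The correct trichotomy (due to Li) depends on how the $p$-part of $N$ compares to the $p$-part of the conductor $N_\chi$ of $\chi$: if the two $p$-parts agree then $|c_f(p)|=p^{(k-1)/2}$; if $p\,\|\,N$ and $p\mid N/N_\chi$ then $|c_f(p)|=p^{(k-2)/2}$; and if $p^2\mid N$ with $p\mid N/N_\chi$ then $c_f(p)=0$. In every case $|c_f(p)|\le p^{(k-1)/2}$, so your bound $|c_f(p^r)|\le p^{r(k-1)/2}\le (r+1)p^{r(k-1)/2}$ at bad primes stands, and the argument goes through.
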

Letting $\|f\|:=\sqrt{\left<f,f\right>}$ denote the Petersson norm, Schulze-Pillot and Yenirce \cite[Theorem 12]{SY} constructed an explicit orthonormal basis for $S_{k}(\Gamma_0(N),\chi)$ and used \Cref{lem:Deligne} to obtain a bound for the Fourier coefficients of any $f\in S_k(\Gamma_0(N),\chi)$.
\begin{lem}\label{lem:SY}
Suppose that $k,N\in\N$, $\chi$ is a character $\pmod N$, and $f\in S_{k}(\Gamma_0(N),\chi)$. Then we have
\[
\left|c_f(n)\right|\leq 2\sqrt{\pi} e^{2\pi}\sqrt{\dim_{\C} (S_k\left(\Gamma_0(N),\chi\right))} \sqrt N\prod_{p\mid N}\frac{\left(1+\frac{1}{p}\right)^3}{\sqrt{1-\frac{1}{p^4}}}  \|f\|d(n)n^{\frac{k-1}{2}}.
\]
\end{lem}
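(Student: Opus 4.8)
The plan is to reproduce the construction of Schulze-Pillot--Yenirce: build an explicit orthonormal basis of $S_{k}(\Gamma_0(N),\chi)$ from newforms and their $V_d$-images, and then feed Deligne's bound (\Cref{lem:Deligne}) through a Cauchy--Schwarz estimate while tracking every constant. First I would reduce to a uniform bound on the coefficients of a single orthonormal basis vector. Fix an orthonormal basis $\{u_1,\dots,u_D\}$ with $D=\dim_{\C}S_{k}(\Gamma_0(N),\chi)$, write $f=\sum_{i=1}^{D}a_i u_i$ with $a_i=\langle f,u_i\rangle$, and note $\sum_i|a_i|^2=\|f\|^2$. Then
\[
|c_f(n)|=\Bigl|\sum_{i=1}^{D}a_i\,c_{u_i}(n)\Bigr|\leq\Bigl(\sum_{i=1}^{D}|a_i|^2\Bigr)^{1/2}\Bigl(\sum_{i=1}^{D}|c_{u_i}(n)|^2\Bigr)^{1/2}\leq\|f\|\,\sqrt{D}\,\max_{i}|c_{u_i}(n)|.
\]
This already isolates the factors $\|f\|$ and $\sqrt{\dim_{\C}(S_k(\Gamma_0(N),\chi))}$, so it remains to bound each $|c_{u_i}(n)|$ by $2\sqrt{\pi}\,e^{2\pi}\sqrt{N}\prod_{p\mid N}\tfrac{(1+1/p)^3}{\sqrt{1-1/p^4}}\,d(n)n^{(k-1)/2}$.

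Next I would invoke Atkin--Lehner theory to decompose $S_{k}(\Gamma_0(N),\chi)=\bigoplus_{M,g}\operatorname{span}\{g\mid V_d:d\mid N/M\}$, the sum over levels $M\mid N$ through which $\chi$ factors and newforms $g\in S_k(\Gamma_0(M),\chi)$ normalized by $c_g(1)=1$. Distinct blocks are mutually orthogonal, so I may orthonormalize within one block. The $g\mid V_d$ are linearly independent but not orthogonal; I would compute their Gram matrix $(\langle g\mid V_c,\,g\mid V_d\rangle)_{c,d\mid N/M}$ via the classical formula expressing $\langle g\mid V_c,\,g\mid V_d\rangle/\|g\|^2$ as an arithmetic factor in $\gcd(c,d)$ and the Hecke eigenvalues $c_g(p)$, which by \Cref{lem:Deligne} obey $|c_g(p)|\leq 2p^{(k-1)/2}$. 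Gram--Schmidt then produces orthonormal vectors of the form $u=\|g\|^{-1}\sum_{e\mid d}\beta_e\,(g\mid V_e)$ with $\beta_e$ controlled explicitly by these eigenvalues and the Gram determinant. Since $c_{g\mid V_e}(n)=c_g(n/e)$ vanishes unless $e\mid n$, and otherwise $|c_g(n/e)|\leq d(n/e)(n/e)^{(k-1)/2}\leq d(n)n^{(k-1)/2}e^{-(k-1)/2}$, the decay $e^{-(k-1)/2}$ is matched by the normalization built into $\beta_e$, leaving $|c_u(n)|\leq d(n)n^{(k-1)/2}$ times an explicit arithmetic constant and the crucial factor $\|g\|^{-1}$.

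The remaining work is a lower bound for the Petersson norm of a normalized newform, which is where the absolute constant $2\sqrt{\pi}\,e^{2\pi}$ and the level factor $\sqrt{N}\prod_{p\mid N}\tfrac{(1+1/p)^3}{\sqrt{1-1/p^4}}$ enter. I would use the elementary integral representation $c_g(n)=e^{2\pi n y}\int_0^1 g(x+iy)e^{-2\pi i n x}\,dx$; taking $n=1$ and $y=1$ gives $1=|c_g(1)|\leq e^{2\pi}\sup_{z}\bigl(|g(z)|\,\Im(z)^{k/2}\bigr)$, and bounding the sup-norm by the $L^2$-norm through the Bergman reproducing kernel (whose diagonal is controlled by $\sqrt{\pi}$, the index $[\SL_2(\Z):\Gamma_0(N)]=N\prod_{p\mid N}(1+1/p)$, and the local Euler factors) yields the desired lower bound $\|g\|^{-1}\leq 2\sqrt{\pi}\,e^{2\pi}\sqrt{N}\prod_{p\mid N}\tfrac{(1+1/p)^3}{\sqrt{1-1/p^4}}$ up to the arithmetic constant already accumulated. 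Assembling the three steps gives the claim.

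I expect the main obstacle to be precisely this last quantitative norm control: computing the oldform Gram matrix and inverting it with uniform bounds on the $\beta_e$, and pinning down the sup-norm--to--$L^2$ comparison so that the reproducing-kernel and index estimates collapse exactly into the stated Euler product $\prod_{p\mid N}\tfrac{(1+1/p)^3}{\sqrt{1-1/p^4}}$ and the absolute constant $2\sqrt{\pi}\,e^{2\pi}$. By contrast, the reduction via Cauchy--Schwarz and the application of Deligne's bound in the first two steps are routine once the orthonormal basis is in hand.
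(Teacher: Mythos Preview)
The paper does not actually prove this lemma: it is stated as a direct quotation of \cite[Theorem~12]{SY}, with the surrounding text only remarking that Schulze-Pillot and Yenirce built an explicit orthonormal basis and combined it with Deligne's bound. So there is no ``paper's own proof'' to compare your attempt against.

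That said, your sketch is recognizably the Schulze-Pillot--Yenirce strategy: Cauchy--Schwarz to reduce to an orthonormal basis, Atkin--Lehner newform decomposition plus explicit orthonormalization of the oldform blocks $\{g\mid V_d\}$, and a lower bound on $\|g\|$ for a normalized newform $g$. If your goal is to reproduce the cited result rather than simply invoke it, be aware that the last step is the delicate one and your description of it is not quite accurate: the constant $2\sqrt{\pi}e^{2\pi}$ and the Euler product do not arise from a sup-norm/Bergman-kernel comparison in \cite{SY}, but from an explicit computation of the oldform Gram matrix together with a direct lower bound on $\|g\|^2$ obtained by restricting the Petersson integral to a single copy of the standard fundamental domain and using $|c_g(1)|=1$. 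Your heuristic about the reproducing kernel would give the right order of magnitude but not the exact stated constants, so if you intend to carry out the details you should follow \cite{SY} there rather than the Bergman-kernel route.
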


\subsection{$L$-functions of Dirichlet characters}
We require a lemma for evaluating $L$-functions of Dirichlet characters at non-positive integers. The following identity is well-known and follows directly from \cite[p. 249]{Apostol} and \cite[Theorem 12.13]{Apostol}.
\begin{lem}\label{lem:Dirichlet}
	For a discriminant $D$ and $k\in\N$, we have
	\[
		L\left(1-k,\chi_D\right)= -\frac{|D|^{k-1}}{k}\sum_{r=1}^{|D|} \chi_D(r)B_k\left(\frac{r}{|D|}\right),
	\]
	where $B_k(x)$ denotes the $k$-th Bernoulli polynomial.
\end{lem}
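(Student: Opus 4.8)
The plan is to reduce the identity to the corresponding special value of the Hurwitz zeta function by exploiting the periodicity of the Kronecker symbol $\chi_D$ modulo $|D|$. Recall that for a discriminant $D$ the symbol $\chi_D=\left(\frac{D}{\cdot}\right)$ is a real Dirichlet character modulo $|D|$, so in particular $\chi_D(n)$ depends only on the residue class of $n$ modulo $|D|$ (and vanishes when $\gcd(n,D)>1$). First I would group the terms of the Dirichlet series according to the residue of $n$ modulo $|D|$: writing every $n\geq1$ uniquely as $n=m|D|+r$ with $m\geq0$ and $1\leq r\leq|D|$, for $\re(s)>1$ this gives
\[
L(s,\chi_D)=\sum_{r=1}^{|D|}\chi_D(r)\sum_{m\geq0}\frac{1}{(m|D|+r)^s}=|D|^{-s}\sum_{r=1}^{|D|}\chi_D(r)\,\zeta\left(s,\tfrac{r}{|D|}\right),
\]
where $\zeta(s,a):=\sum_{m\geq0}(m+a)^{-s}$ is the Hurwitz zeta function. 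Since each $\zeta(s,\tfrac{r}{|D|})$ continues meromorphically to $\C$ with its only pole at $s=1$, and since $1-k\leq0<1$ for $k\in\N$, the right-hand side is holomorphic at $s=1-k$ and the displayed identity persists there by analytic continuation; this is precisely the content of the decomposition recorded in \cite[p.~249]{Apostol}.

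Next I would invoke the classical evaluation of the Hurwitz zeta function at non-positive integers, namely $\zeta(1-k,a)=-\frac{B_k(a)}{k}$ for $k\in\N$, which is exactly \cite[Theorem~12.13]{Apostol}. Substituting $s=1-k$ into the decomposition above and using $|D|^{-(1-k)}=|D|^{k-1}$ then yields
\[
L(1-k,\chi_D)=|D|^{k-1}\sum_{r=1}^{|D|}\chi_D(r)\left(-\frac{B_k\left(\tfrac{r}{|D|}\right)}{k}\right)=-\frac{|D|^{k-1}}{k}\sum_{r=1}^{|D|}\chi_D(r)B_k\left(\tfrac{r}{|D|}\right),
\]
which is the asserted formula.

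There is no genuine obstacle here, consistent with the result being well-known: the proof is essentially a bookkeeping combination of the two cited facts. The only points requiring care are the justification that the partition into residue classes commutes with analytic continuation (which is immediate, as the right-hand side is a finite sum of functions each holomorphic at $s=1-k$), and the verification that $\chi_D$ is genuinely $|D|$-periodic, which is guaranteed by the hypothesis that $D$ is a discriminant. If one wished to avoid citing the Hurwitz special value outright, the mildly more involved alternative would be to derive $\zeta(1-k,a)=-B_k(a)/k$ directly from the generating function $\frac{te^{at}}{e^t-1}=\sum_{k\geq0}B_k(a)\frac{t^k}{k!}$ together with a contour-integral representation of $\zeta(s,a)$, but this reproves a standard fact and is unnecessary for the statement as worded.
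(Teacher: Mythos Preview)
Your argument is correct and is exactly the route the paper indicates: the paper does not write out a proof but simply cites \cite[p.~249]{Apostol} for the decomposition of $L(s,\chi_D)$ into Hurwitz zeta functions and \cite[Theorem~12.13]{Apostol} for the special value $\zeta(1-k,a)=-B_k(a)/k$, which are precisely the two ingredients you combine.
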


For the specific case $D=(\frac{-1}{p}) p$ and $k\equiv k_p\pmod{2}$ with $k_p:=\frac{p-1}{2}$, one may use the functional equation for the Dirichlet $L$-function and Gauss's evaluation of the quadratic Gauss sum to obtain the sign of the $L$-value in \Cref{lem:Dirichlet}.
\begin{lem}\label{lem:Lchisgn}
Let $p$ be an odd prime and $k\geq 2$ be an integer satisfying $k\equiv k_p\pmod{2}$. Then
\[
\sgn\left(L_{k,p}\right)=(-1)^{\frac{k}{2}+\frac{p-1}{4}}\left(\frac{-2}{p}\right).
\]
In particular, we have
\[
\sgn\left(L_{k_p,p}\right)=\left(\frac{-2}{p}\right)\quad\text{ and }\quad \sgn\left(L_{pk_p,p}\right)=\left(\frac{2}{p}\right).
\]

\end{lem}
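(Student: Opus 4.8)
The plan is to reduce everything to a sign computation for the Dirichlet $L$-value $L(1-k,\chi_D)$ with $D:=\left(\frac{-1}{p}\right)p$. Since $L_{k,p}=L(1-k,\chi_D)^{-1}$, one has $\sgn(L_{k,p})=\sgn(L(1-k,\chi_D))$, so it suffices to determine the latter sign. Note that $D$ is a fundamental discriminant with $|D|=p$ and $\chi_D(-1)=\sgn(D)=\left(\frac{-1}{p}\right)=(-1)^{k_p}$; set $a:=\frac{1-\chi_D(-1)}{2}\in\{0,1\}$, so that the hypothesis $k\equiv k_p\pmod{2}$ is exactly the condition $k\equiv a\pmod{2}$ (the parity for which $L(1-k,\chi_D)\neq 0$). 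The first step is to record that $L(k,\chi_D)>0$ for all $k\geq 2$: this is immediate from the Euler product $L(k,\chi_D)=\prod_q\left(1-\chi_D(q)q^{-k}\right)^{-1}$, since each local factor equals $1$, $1-q^{-k}$, or $1+q^{-k}$ and is therefore positive.

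Next I would invoke the functional equation for the completed $L$-function. Writing $\Lambda(s,\chi_D):=\left(\frac{p}{\pi}\right)^{\frac{s+a}{2}}\Gamma\!\left(\frac{s+a}{2}\right)L(s,\chi_D)$, one has (since $\chi_D$ is real) $\Lambda(s,\chi_D)=W(\chi_D)\Lambda(1-s,\chi_D)$ with root number $W(\chi_D)=\tau(\chi_D)/(i^a\sqrt p)$, where $\tau$ is the Gauss sum. Gauss's evaluation of the quadratic Gauss sum gives $\tau(\chi_D)=\sqrt p$ when $D>0$ (hence $a=0$) and $\tau(\chi_D)=i\sqrt p$ when $D<0$ (hence $a=1$); in both cases $W(\chi_D)=1$. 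Evaluating the functional equation at $s=1-k$ and solving for $L(1-k,\chi_D)$ yields
\[
L(1-k,\chi_D)=\left(\frac{p}{\pi}\right)^{k-\frac{1}{2}}\frac{\Gamma\!\left(\frac{k+a}{2}\right)}{\Gamma\!\left(\frac{1-k+a}{2}\right)}L(k,\chi_D).
\]
Since $k\equiv a\pmod{2}$, the quantity $\frac{k+a}{2}$ is a positive integer, so $\Gamma\!\left(\frac{k+a}{2}\right)>0$; writing $\ell:=\frac{k-a}{2}\in\N$, the remaining argument is $\frac{1-k+a}{2}=\frac{1}{2}-\ell$, at which $\Gamma$ has sign $(-1)^{\ell}$, because $\Gamma$ is of constant sign on each interval between consecutive poles, alternating starting from $\Gamma\!\left(\frac{1}{2}\right)=\sqrt\pi>0$. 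Combined with $L(k,\chi_D)>0$, this gives $\sgn(L_{k,p})=(-1)^{(k-a)/2}$.

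The last step is to put this parity into the asserted form. Using $k\equiv k_p\pmod{2}$ I would factor $(-1)^{(k-a)/2}=(-1)^{(k-k_p)/2}(-1)^{(k_p-a)/2}$, and the supplements to quadratic reciprocity ($\left(\frac{-1}{p}\right)=(-1)^{k_p}$ and the evaluation of $\left(\frac{2}{p}\right)$ by $p\bmod 8$) show that $(-1)^{(k_p-a)/2}=\left(\frac{-2}{p}\right)$; a short rewriting then yields $(-1)^{\frac{k}{2}+\frac{p-1}{4}}\left(\frac{-2}{p}\right)$. For the two displayed special cases: when $k=k_p$ the factor $(-1)^{(k-k_p)/2}$ is trivial, so $\sgn(L_{k_p,p})=\left(\frac{-2}{p}\right)$; when $k=pk_p$ one has $(-1)^{(pk_p-k_p)/2}=(-1)^{(p-1)k_p/2}=(-1)^{k_p^2}=(-1)^{k_p}=\left(\frac{-1}{p}\right)$, so $\sgn(L_{pk_p,p})=\left(\frac{-1}{p}\right)\left(\frac{-2}{p}\right)=\left(\frac{2}{p}\right)$.

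I expect the only real work to be the sign bookkeeping, not any substantive input: fixing the correct normalization of the functional equation, pinning the root number to $+1$ via Gauss's theorem, getting the sign of the $\Gamma$-quotient right, and carefully translating $(-1)^{(k-a)/2}$ into Legendre symbols via the supplements to quadratic reciprocity (in particular keeping track of parity conventions when $k$ is odd, i.e.\ when $p\equiv 3\pmod 4$). Everything here is classical, so the argument should be short once the normalization is fixed.
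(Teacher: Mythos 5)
Your argument follows exactly the route the paper intends (the paper offers no written proof beyond the one-line hint preceding the lemma: functional equation plus Gauss's evaluation of the quadratic Gauss sum), and the substance is correct: $W(\chi_D)=1$, the ratio of $\Gamma$-factors contributes $(-1)^{(k-a)/2}$, $L(k,\chi_D)>0$ from the Euler product, and $(-1)^{(k_p-a)/2}=\left(\frac{-2}{p}\right)$, so that $\sgn\left(L_{k,p}\right)=(-1)^{(k-k_p)/2}\left(\frac{-2}{p}\right)$; your deductions of the two ``in particular'' cases from this are also correct. The one step that does not go through is the final ``short rewriting'': $(-1)^{(k-k_p)/2}$ equals $(-1)^{\frac{k}{2}+\frac{p-1}{4}}=(-1)^{(k+k_p)/2}$ only when $k_p$ is even, i.e.\ when $p\equiv 1\pmod{4}$; for $p\equiv 3\pmod{4}$ the two expressions differ by $(-1)^{k_p}=-1$. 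In fact your computation exposes a sign typo in the lemma's displayed general formula: for $p=3$, $k=3$ one has $L(-2,\chi_{-3})=-\frac{2}{9}$, hence $\sgn\left(L_{3,3}\right)=-1$, whereas $(-1)^{\frac{3}{2}+\frac{1}{2}}\left(\frac{-2}{3}\right)=+1$ (similarly for $p=7$, $k=3$). The correct general statement is $\sgn\left(L_{k,p}\right)=(-1)^{\frac{k}{2}-\frac{p-1}{4}}\left(\frac{-2}{p}\right)$, which is precisely what you prove and which is consistent with both special cases (the only parts of the lemma used elsewhere in the paper). So rather than forcing your intermediate result into the printed exponent, keep $(-1)^{(k-k_p)/2}\left(\frac{-2}{p}\right)$ and note the discrepancy; nothing downstream is affected.
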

\subsection{Hurwitz class numbers}\label{sec:Hurwitz}

For $D\in\N$, let $H(D)$ denote the {\it Hurwitz class number}, the weighted number of classes of positive-definite integral binary quadratic forms of discriminant $-D$, where each class is weighted by the inverse of the size of its automorphism group in $\operatorname{PSL}_{2}(\Z)$.
We note that if $-D<0$ is a discriminant, then $H(D)>0$, while $H(D)=0$ if $-D$ is not a discriminant. We extend the definition by setting $H(0):=-\frac{1}{12}$ and $H(r):=0$ for $r\in \Q\setminus\N_0$.
For a fundamental discriminant $-D$, we have \cite[p. 273]{Cohen}
	\begin{equation}\label{E:Hform}
	 	H\left(Df^2\right) = H(D) S_{D}(f).
	\end{equation}
	Here, letting $\mu$ denote the M\"obius function, we define
	\begin{equation}\label{eqn:SDfdef}
		S_D(f) := \sum_{d\mid f} \mu(d) \chi_{-D}(d) \sigma\left(\frac {f}d\right).
	\end{equation}
For $\ell_1,\ell_2\in\N$, let
\begin{equation}\label{eqn:Hslashdef}
 	\mathcal H(z) := \sum_{D\geq 0} H(D) q^D, \quad \mathcal H_{\ell_1,\ell_2} := \mathcal H \mid \left(U_{\ell_1\ell_2} - \ell_2 U_{\ell_1} \circ V_{\ell_2}\right).
\end{equation}
For $y>0$, let $\GG(s,y):=\int_{y}^{\infty}e^{-t} t^{s-1}dt$ be the {\it incomplete gamma function}. The modularity of
	\begin{equation*}
		\widehat{\CH}(z):=\CH(z)+\frac{1}{8\pi \sqrt{y}}+ \frac{1}{4\sqrt{\pi}}\sum_{n\ge1}n\GG\left(-\half,4\pi n^{2} y\right)q^{-n^{2}}
	\end{equation*}
was proved by Zagier \cite{Zagier} (see also \cite[Chapter 2, Theorem 2]{HZ}). Using his result, the modularity of $\mathcal{H}_{\ell_1,\ell_2}$ was given in \cite[Lemma 2.6]{BK}.
\begin{lem}\label{lem:Hslash}
	For $\ell_1,\ell_2\in\N$, with $\gcd(\ell_1,\ell_2)=1$ and $\ell_2$ squarefree, we have that
	\begin{equation*}
	\mathcal{H}_{\ell_1,\ell_2}\in  M_{\frac{3}{2}}\left(\Gamma_0(4\operatorname{rad}(\ell_1)\ell_2),\chi_{4\ell_1 \ell_2}\right).
	\end{equation*}
\end{lem}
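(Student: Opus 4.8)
\emph{Plan.} The idea is to realise $\mathcal H_{\ell_1,\ell_2}$ as $\widehat{\mathcal H}\big|\big(U_{\ell_1\ell_2}-\ell_2\,U_{\ell_1}\circ V_{\ell_2}\big)$, to transport the weight $\tfrac32$ modularity of Zagier's completion $\widehat{\mathcal H}$ through the operators $U_\ell$, $V_\ell$ via \Cref{lem:operatorshalf}, and to check that the non-holomorphic part of $\widehat{\mathcal H}\big|\big(U_{\ell_1\ell_2}-\ell_2\,U_{\ell_1}\circ V_{\ell_2}\big)$ vanishes identically. Write $\widehat{\mathcal H}=\mathcal H+g^-$ with $\mathcal H=\sum_{D\ge0}H(D)q^D$ holomorphic and $g^-$ the non-holomorphic piece (supported on exponents $\le0$, with genuinely $y$-dependent coefficients). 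Since $U_\ell$ and $V_\ell$ act $\C$-linearly and preserve this splitting, once $g^-\big|\big(U_{\ell_1\ell_2}-\ell_2\,U_{\ell_1}\circ V_{\ell_2}\big)=0$ we have $\widehat{\mathcal H}\big|\big(U_{\ell_1\ell_2}-\ell_2\,U_{\ell_1}\circ V_{\ell_2}\big)=\mathcal H\big|\big(U_{\ell_1\ell_2}-\ell_2\,U_{\ell_1}\circ V_{\ell_2}\big)=\mathcal H_{\ell_1,\ell_2}$, and the left-hand side is modular by the operator lemmas.

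\emph{Transformation, level and character.} Zagier's theorem gives weight $\tfrac32$ modularity of $\widehat{\mathcal H}$ on $\Gamma_0(4)$ with trivial character, and \Cref{lem:operatorshalf} applies since it is stated for functions satisfying modularity, not merely holomorphic forms. Part (1) gives that $\widehat{\mathcal H}\big|U_{\ell_1\ell_2}$ has weight $\tfrac32$ on $\Gamma_0(4\operatorname{rad}(\ell_1\ell_2))$ with character $\chi_{4\ell_1\ell_2}$, and $\operatorname{rad}(\ell_1\ell_2)=\operatorname{rad}(\ell_1)\ell_2$ as $\ell_2$ is squarefree and coprime to $\ell_1$. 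Part (3) then part (1) give $\widehat{\mathcal H}\big|V_{\ell_2}$ on $\Gamma_0(4\ell_2)$ with character $\chi_{4\ell_2}$, and then $\widehat{\mathcal H}\big|U_{\ell_1}\circ V_{\ell_2}$ on $\Gamma_0(4\lcm(\ell_2,\operatorname{rad}(\ell_1)))=\Gamma_0(4\operatorname{rad}(\ell_1)\ell_2)$ with character $\chi_{4\ell_2}\chi_{4\ell_1}$, the $\lcm$ collapsing because $\gcd(\ell_1,\ell_2)=1$. A short Kronecker-symbol computation shows $\chi_{4\ell_2}\chi_{4\ell_1}=\chi_{4\ell_1\ell_2}$ as characters modulo the (even) level $4\operatorname{rad}(\ell_1)\ell_2$. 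Hence both summands, and thus $\widehat{\mathcal H}\big|\big(U_{\ell_1\ell_2}-\ell_2\,U_{\ell_1}\circ V_{\ell_2}\big)$, satisfy weight $\tfrac32$ modularity on $\Gamma_0(4\operatorname{rad}(\ell_1)\ell_2)$ with character $\chi_{4\ell_1\ell_2}$.

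\emph{Vanishing of the non-holomorphic part --- the heart of the matter.} We must show $g^-\big|\big(U_{\ell_1\ell_2}-\ell_2\,U_{\ell_1}\circ V_{\ell_2}\big)=0$, where
\[
g^-(z)=\frac{1}{8\pi\sqrt y}+\frac{1}{4\sqrt\pi}\sum_{n\ge1}n\,\GG\!\left(-\tfrac12,4\pi n^2y\right)q^{-n^2}.
\]
Applying $U_{\ell_1\ell_2}$ rescales $\tfrac1{8\pi\sqrt y}$ to $\tfrac{\sqrt{\ell_1\ell_2}}{8\pi\sqrt y}$ and keeps the term $q^{-n^2}$ exactly when $\ell_1\ell_2\mid n^2$. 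Writing $\ell_1=\prod_{p\mid\ell_1}p^{a_p}$, set $\ell_1'=\prod_{p\mid\ell_1}p^{\lceil a_p/2\rceil}$ and $\widetilde\ell_1=\prod_{p\mid\ell_1,\;a_p\text{ odd}}p=(\ell_1')^2/\ell_1$; then (using $\ell_2$ squarefree and $\gcd(\ell_1,\ell_2)=1$) one has $\ell_1\ell_2\mid n^2\iff\ell_1'\ell_2\mid n$, and for $n=\ell_1'\ell_2k$ one computes $n^2/(\ell_1\ell_2)=\widetilde\ell_1\ell_2k^2$. Thus $g^-\big|U_{\ell_1\ell_2}$ equals $\tfrac{\sqrt{\ell_1\ell_2}}{8\pi\sqrt y}+\tfrac{\ell_1'\ell_2}{4\sqrt\pi}\sum_{k\ge1}k\,\GG(-\tfrac12,4\pi\widetilde\ell_1\ell_2k^2y)q^{-\widetilde\ell_1\ell_2k^2}$. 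Doing the same for $U_{\ell_1}\circ V_{\ell_2}$ --- here $V_{\ell_2}$ first sends $q^{-n^2}\mapsto q^{-\ell_2n^2}$, $y\mapsto\ell_2y$, and then $U_{\ell_1}$ keeps the term only for $\ell_1\mid n^2$, i.e.\ $\ell_1'\mid n$ --- yields, after multiplying by $\ell_2$, the same expression term by term: the incomplete-gamma arguments $4\pi\widetilde\ell_1\ell_2k^2y$, the exponents $-\widetilde\ell_1\ell_2k^2$, and the $\tfrac1{8\pi\sqrt y}$-coefficient all match. Hence the difference vanishes. This bookkeeping --- reconciling the divisibility conditions $\ell_1\ell_2\mid n^2$ and $\ell_1\mid n^2$ and matching the $y$-rescalings inside $\GG(-\tfrac12,\cdot)$ --- is the one genuinely delicate point, and the hypotheses $\gcd(\ell_1,\ell_2)=1$ and $\ell_2$ squarefree are exactly what make the two sides coincide.

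\emph{Boundedness at the cusps and conclusion.} By the previous step $\mathcal H_{\ell_1,\ell_2}=\widehat{\mathcal H}\big|\big(U_{\ell_1\ell_2}-\ell_2\,U_{\ell_1}\circ V_{\ell_2}\big)$, which is holomorphic on $\H$ and satisfies the required modularity. Its $n$-th Fourier coefficient is $H(\ell_1\ell_2n)-\ell_2H(\ell_1n/\ell_2)$, which grows at most polynomially in $n$ by the classical bound on Hurwitz class numbers, so $\mathcal H_{\ell_1,\ell_2}$ has moderate growth at $i\infty$; equivalently, $\widehat{\mathcal H}$ has at most moderate growth at every cusp of $\Gamma_0(4)$ and $U_\ell$, $V_\ell$ preserve moderate growth. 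Being holomorphic on $\H$ with moderate growth at all cusps, $\mathcal H_{\ell_1,\ell_2}$ lies in $M_{\frac{3}{2}}\big(\Gamma_0(4\operatorname{rad}(\ell_1)\ell_2),\chi_{4\ell_1\ell_2}\big)$, as claimed.
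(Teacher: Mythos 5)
Your proof is correct. The paper does not actually prove this lemma---it imports it from \cite[Lemma 2.6]{BK}---and your argument is precisely the standard one behind that citation: the combination $U_{\ell_1\ell_2}-\ell_2\,U_{\ell_1}\circ V_{\ell_2}$ annihilates the non-holomorphic part of Zagier's completion $\widehat{\mathcal H}$ (your divisibility bookkeeping with $\ell_1'$ and $\widetilde{\ell_1}$, which is exactly where the hypotheses $\gcd(\ell_1,\ell_2)=1$ and $\ell_2$ squarefree enter, checks out), after which \Cref{lem:operatorshalf} yields the stated level $4\operatorname{rad}(\ell_1)\ell_2$ and character $\chi_{4\ell_1\ell_2}$.
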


\section{Proof of Theorem \ref{thm:M=1}}\label{sec:M=1}

We abbreviate $b_{1}(n) := C_{1^{-2}2^{3}4^{2}}(n)$.
\begin{proof}[Proof of Theorem \ref{thm:M=1}]
By \cite[Theorem 1.60]{OnoBook}, we have
\[
	\frac{\eta(16z)^2}{\eta(8z)} = \sum_{n\geq0} q^{(2n+1)^2}.
\]
We therefore have\footnote{Throughout we use boldface letters for vectors.} 
\begin{equation*}
	\sum_{n\geq0}b_{1}(n) q^{8n+4} = \sum_{\bm{n}\in\N_0^3} q^{8\left(T_{n_1} + T_{n_2} + 2T_{n_3}\right) + 4},
\end{equation*}
where $T_n:=\frac{n(n+1)}{2}$. Comparing the $(8n+4)$-th Fourier coefficient on both sides, we see that
\begin{equation}\label{eqn:c(1,-2),(2,3),(4,2)}
	b_{1}(n)=\#\left\{\bm{n}\in\N_0^3: T_{n_1}+T_{n_2}+2T_{n_3}=n\right\}.
\end{equation}

It was proved by Liouville \cite{Liouville} (see also the statement in \cite{BosmaKane} and its generalization in \cite[Theorem 1.1]{BosmaKane}) that, for $n\in\N_0$,
\[
\#\left\{\bm{n}\in\N_0^3: T_{n_1}+T_{n_2}+2T_{n_3}=n\right\}>0.
\]
Combining this with \eqref{eqn:c(1,-2),(2,3),(4,2)} immediately yields Theorem  \ref{thm:M=1}.
\end{proof}

\section{Proof of Theorem \ref{thm:132-233}}\label{sec:M=2}

Abbreviating $b_{2}(n) := C_{1^{3}2^{-2}3^{3}}(n)$, $s_{2}(n):=\sgn(b_{2}(n))$, we first show the following.
\begin{lem}\label{lem:c132-233}
	We have
\[
	b_{2}(n)=
	\begin{cases}
		\sum_{d\mid (3n+1)} \left(\frac3d\right)d &\text{if }n\equiv 0\pmod{4},\\
		-\frac{1}{3}\sum_{d\mid (3n+1)} \left(\frac3d\right)d &\text{if }n\equiv 2\pmod{4},\\
		-\frac{1}{3}\sum_{d\mid (3n+1)} \left(\frac{12}d\right)d-\frac{2}{3}\sum_{d\mid (3n+1)} \left(\frac{12}{\frac{3n+1}{d}}\right)d &\text{if }n\equiv 1\pmod{2}.
	\end{cases}
\]
\end{lem}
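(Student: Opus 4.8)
The plan is to realize a rescaling of $\frac{\eta(z)^{3}\eta(3z)^{3}}{\eta(2z)^{2}}$ as an explicit combination of Eisenstein series acted on by sieving operators, and then to close the identity with the valence formula.

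\emph{Step 1: reduction to a modular form.} Since $\eta(z)^{3}\eta(3z)^{3}\eta(2z)^{-2}=q^{1/3}\prod_{j}(q^{j};q^{j})_{\infty}^{\delta_{j}}$, the series $\sum_{n\ge0}b_{2}(n)q^{n}$ only becomes an honest $q$-expansion after $q\mapsto q^{1/3}$, so I would work with
\[
F(z):=\frac{\eta(3z)^{3}\eta(9z)^{3}}{\eta(6z)^{2}}=\sum_{n\ge0}b_{2}(n)\,q^{3n+1},
\]
a weight-$2$ holomorphic $\eta$-quotient supported on $m\equiv1\pmod3$. By the standard criteria for $\eta$-quotients (with level $N=72$ the two $24$-divisibility conditions on the exponents hold, the nebentype comes out as $\chi_{12}$, and the order of vanishing at every cusp is non-negative), $F\in M_{2}(\Gamma_{0}(72),\chi_{12})$.

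\emph{Step 2: an Eisenstein candidate.} Write $\sigma_{1,\psi}(m):=\sum_{d\mid m}\psi(d)d$ and $\chi_{12}=\chi_{-4}\chi_{-3}$. By Lemma \ref{lem:Ekchipsi} the weight-$2$ Eisenstein series $E_{2,\chi_{1},\chi_{12}}$, $E_{2,\chi_{12},\chi_{1}}$, $E_{2,\chi_{-4},\chi_{-3}}$ all lie in $M_{2}(\Gamma_{0}(12),\chi_{12})$, and by \eqref{eqn:Ekchipsi} their $q^{m}$-coefficients ($m\ge1$) are $2\sigma_{1,\chi_{12}}(m)$, $2\sum_{d\mid m}\chi_{12}(\tfrac{m}{d})d$, and $2\sum_{d\mid m}\chi_{-4}(\tfrac{m}{d})\chi_{-3}(d)d$; the elementary relation $\chi_{-4}(\tfrac md)\chi_{-3}(d)=\chi_{-4}(m)\chi_{12}(d)$ (for $m$ odd, $d\mid m$) turns the last coefficient into $2\chi_{-4}(m)\sigma_{1,\chi_{12}}(m)$. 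Matching these coefficients against the claimed formula — using $\left(\tfrac3d\right)=\left(\tfrac{12}{d}\right)=\chi_{12}(d)$, which vanishes for $d$ even — leads to the candidate
\[
\widetilde F:=\Bigl(\tfrac16 E_{2,\chi_{1},\chi_{12}}+\tfrac13 E_{2,\chi_{-4},\chi_{-3}}\Bigr)\Big|S_{6,1}-\Bigl(\tfrac16 E_{2,\chi_{1},\chi_{12}}+\tfrac13 E_{2,\chi_{12},\chi_{1}}\Bigr)\Big|S_{6,4},
\]
the two sieves $S_{6,1},S_{6,4}$ isolating the terms with $n$ even ($3n+1\equiv1\pmod6$) and $n$ odd ($3n+1\equiv4\pmod6$), respectively. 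By Lemma \ref{lem:operators}(2) (legitimate since $6\mid24$), $\widetilde F\in M_{2}(\Gamma_{0}(72),\chi_{12})$, and the sieving kills the nonzero constant term $L(-1,\chi_{12})$ of $E_{2,\chi_{1},\chi_{12}}$.

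\emph{Step 3: valence formula and unpacking.} Both $F$ and $\widetilde F$ lie in $M_{2}(\Gamma_{0}(72),\chi_{12})$, so by Lemma \ref{lem:valence} the identity $F=\widetilde F$ follows once their Fourier coefficients agree for $0\le m\le 72\cdot\tfrac{2}{12}\prod_{p\mid72}(1+\tfrac1p)=24$, i.e.\ once $b_{2}(n)$ matches the claimed value for $0\le n\le7$, a short direct computation from the product expansion. Granting $F=\widetilde F$, I read off $b_{2}(n)$ from the coefficient of $q^{3n+1}$: the $S_{6,1}$-part contributes $\bigl(\tfrac13+\tfrac23\chi_{-4}(3n+1)\bigr)\sigma_{1,\chi_{12}}(3n+1)$, equal to $\sigma_{1,\chi_{12}}(3n+1)$ when $n\equiv0\pmod4$ and to $-\tfrac13\sigma_{1,\chi_{12}}(3n+1)$ when $n\equiv2\pmod4$ (since $\chi_{-4}(3n+1)=\pm1$ accordingly), while for $n$ odd the $S_{6,4}$-part contributes $-\tfrac13\sum_{d\mid(3n+1)}\chi_{12}(d)d-\tfrac23\sum_{d\mid(3n+1)}\chi_{12}(\tfrac{3n+1}{d})d$. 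These are precisely the three cases of Lemma \ref{lem:c132-233}; as a by-product, $F$ has no cuspidal part.

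\emph{Main obstacle.} The genuine work is in Step 2: singling out exactly the right three Eisenstein series (among the several of weight $2$ and nebentype $\chi_{12}$ on $\Gamma_{0}(72)$, once $V_{d}$-images are admitted) together with the coefficients $\tfrac16,\tfrac13$ — in particular the denominator $3$, which encodes the passage between the conductor-$12$ Eisenstein series and the level-$72$ form $F$. The remaining point requiring attention is checking holomorphy of $F$ at every cusp (a routine application of the order formula for $\eta$-quotients), which is what guarantees $F\in M_{2}$ rather than merely a weakly holomorphic form, so that the valence formula applies.
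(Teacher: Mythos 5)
Your proof is correct and follows essentially the same route as the paper: realize $\frac{\eta(3z)^{3}\eta(9z)^{3}}{\eta(6z)^{2}}$ as a weight-$2$ form with nebentypus $\chi_{12}$, write the claimed right-hand side as a linear combination of sieved weight-$2$ Eisenstein series, and close the identity via the valence formula plus a finite coefficient check. The only (harmless) difference is the packaging of the Eisenstein combination: the paper sieves $E_{2,\chi_1,\chi_{12}}$ separately along $S_{12,1}$ and $S_{12,7}$ and therefore works in level $144$ (checking $48$ coefficients), whereas your use of $E_{2,\chi_{-4},\chi_{-3}}$ with a single $S_{6,1}$ sieve keeps the level at $72$ and reduces the check to $24$ coefficients.
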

\begin{proof}
	By \cite[Theorem 1.64]{OnoBook}, $\frac{\eta(3z)^3\eta(9z)^3}{\eta(6z)^2}\in M_2(\Gamma_0(72),\chi_{12})$.

	The generating function of the right-hand side of Lemma \ref{lem:c132-233} is
	\[
		\frac{1}{2}E_{2,\chi_1,\chi_{12}}\big|S_{12,1} -\frac{1}{6}E_{2,\chi_1,\chi_{12}}\big|S_{12,7} -\frac{1}{6}E_{2,\chi_1,\chi_{12}}\big|S_{6,4} -\frac{1}{3}E_{2,\chi_{12},\chi_{1}}\big|S_{6,4}.
	\]
By Lemma \ref{lem:Ekchipsi}, $E_{2,\chi_{1},\chi_{12}},E_{2,\chi_{12},\chi_{1}}\in M_{2}\left(\Gamma_0(12),\chi_{12}\right)$.
Hence, for $m\in\Z$ and $M\in\N$ with $M\mid 12$, Lemma \ref{lem:operators} (2) implies that $E_{2,\chi_1,\chi_{12}}\big|S_{M,m}, E_{2,\chi_{12},\chi_{1}}\big|S_{M,m} \in M_{2}\left(\Gamma_0(144),\chi_{12}\right)$. We conclude that the generating functions of both sides of the claimed identity lie in $M_2(\Gamma_0(144),\chi_{12})$. By \Cref{lem:valence}, the identity holds if it holds for the first $48$ Fourier coefficients. It was verified computationally for all $3n+1\leq 301$.
\end{proof}
\begin{proof}[Proof of Theorem \ref{thm:132-233}]
	Writing $3n+1=2^{\ell}m$ with $\gcd(6,m)=1$, we have
	\begin{equation*}
		\sum_{d\mid (3n+1)}\hspace{-0.15cm} \left(\frac{12}{d}\right)d = \sum_{d\mid m} \left(\frac{3}{d}\right)d \sum_{r=0}^{\ell} \left(\frac{12}{2^{r}}\right)2^{r} = \prod_{p\mid m}\sum_{j=0}^{\ord_p(m)} \left(\frac{3}{p^{j}}\right)p^{j} = \prod_{p\mid m} \frac{1-\left(\left(\frac{3}{p}\right)p\right)^{\ord_p(m)+1}}{1-\left(\frac{3}{p}\right)p}.
	\end{equation*}
	Now
	\begin{equation*}
		\frac{1-\left(\left(\frac{3}{p}\right)p\right)^{\ord_p(m)+1}}{1-\left(\frac{3}{p}\right)p} = \begin{cases}
			\frac{1-p^{\ord_p(m)}}{1-p} & \text{if}\ p\equiv \pm 1 \pmod{12},\vspace{.1cm}\\
			\frac{1-p^{\ord_p(m)}}{1+p} & \text{if}\ p\equiv \pm 5 \pmod{12},\ \ord_p(m) \ \text{odd},\vspace{.1cm}\\
			\frac{1+p^{\ord_p(m)}}{1+p} & \text{if}\ p\equiv \pm 5 \pmod{12},\ \ord_p(m) \ \text{even}.\\
		\end{cases}
	\end{equation*}
	This implies that
	\begin{equation}\label{eqn:sgnm}
		\sgn\left(\sum_{d\mid m}\left(\frac{3}{d}\right)d\right)=\left(\frac{3}{m}\right) = (-1)^{\frac{m-1}{2}}\left(\frac{m}{3}\right).
	\end{equation}

	For $n$ even, we have $3n+1=m$, so \eqref{eqn:sgnm} implies that
	\begin{equation}\label{eqn:sgneven}
	\sgn\left(\sum_{d\mid (3n+1)} \left(\frac{3}{d}\right)d\right) = (-1)^{\frac{m-1}{2}}\left(\frac{m}{3}\right).
	\end{equation}

	If $n\equiv 0\pmod{4}$, then $m=3n+1\equiv 1\pmod{12}$, so plugging \eqref{eqn:sgneven} into Lemma \ref{lem:c132-233} yields that $s_{2}(n)=1$.
	If $n\equiv 2\pmod{4}$, then $m=3n+1\equiv 7\pmod{12}$, so inserting \eqref{eqn:sgneven} into Lemma \ref{lem:c132-233} implies that $s_{2}(n)=1$. For $2\nmid n$, we write $3n+1=2^{\ell}m$ with $m$ odd. Noting that $(\frac{12}{\frac{3n+1}{d}})=0$ unless $2^{\ell}\mid d$, Lemma \ref{lem:c132-233} gives that
	\begin{equation*}
		b_{2}(n) = -\frac{1}{3}\left(1+2^{\ell+1}\left(\frac{3}{m}\right)\right)\sum_{d\mid m}\left(\frac{3}{d}\right) d.
	\end{equation*}
	By \eqref{eqn:sgnm}, we have $\sgn(\sum_{d\mid m}\left(\frac{3}{d}\right) d)=(\frac3m)$. Since $2^{\ell+1}>1$, $\sgn(1+2^{\ell+1}(\frac{3}{m}))=\sgn((\frac{3}{m}))$. So overall we obtain $s_{2}(n) = -1$ for $n$ odd. Combining these cases proves the claim. 
\end{proof}

\section{Proof of Theorem \ref{thm:14243-2}}\label{sec:M=3}

\subsection{Decomposition of the eta-quotient}
We now explicitly decompose $\frac{\eta(4z)^4\eta(8z)^4}{\eta(12z)^2}$ into an Eisenstein series and a cusp form. For this, we define the Eisenstein series
\begin{align*}
	\mathcal{E}(z)&:=\sum_{\substack{n\geq 1\\ n\equiv 1\pmod{12}}} \sum_{d\mid n}\left(\frac{-1}{d}\right)d^2  q^n-\frac{1}{2}\sum_{\substack{n\geq 1\\ n\equiv 5\pmod{12}}} \sum_{d\mid n}\left(\frac{-1}{d}\right)d^2  q^n\\
	&\qquad - 2\sum_{\substack{n\geq 1\\ n\equiv 9\pmod{12}}} \left(\sum_{d\mid n}\left(\frac{-1}{d}\right)d^2 +6\sum_{d\mid \frac{n}{3}}\left(\frac{-1}{d}\right)d^2 \right) q^n\\
&=\frac{1}{2}E_{3,\chi_1,\chi_{-4}}\big|S_{12,1}-\frac{1}{4} E_{3,\chi_1,\chi_{-4}}\big|S_{12,5}- E_{3,\chi_1,\chi_{-4}}\big|S_{12,9}-6E_{3,\chi_1,\chi_{-4}}\big|V_{3} \big|S_{4,1}.
\end{align*}
For the cuspidal part, let $g_{1}$ denote the normalized newform in $S_{3}(\Gamma_0(12),\chi_{-4})$ with
\begin{align*}
g_{1}(z)=q-\left(1+\sqrt{3}i\right) q^2 + \sqrt{3}i q^3 -2\left(1-\sqrt{3}i\right) q^4 -2q^5+\left(3-\sqrt{3}i\right) q^6 &-4\sqrt{3}i q^7 +8q^8\\
&\hspace{.4cm}-3q^9+O\left(q^{10}\right)
\end{align*}
 and let $g_{2}$ be the normalized newform in $S_{3}(\Gamma_0(36),\chi_{-4})$ satisfying
\[
	g_{2}(z)=q-2q^2+4q^4+8q^5-8q^8-16q^{10}-10q^{13}+16q^{16}-16q^{17}+O\left(q^{20}\right).
\]
We then obtain the following decomposition of the eta-quotient.
\begin{lem}\label{lem:14243-2}
We have
\begin{multline*}
	\frac{\eta(4z)^4\eta(8z)^4}{\eta(12z)^2}= \frac{1}{7}\mathcal{E}(z) -\frac{27}{14}g_{1}\big|S_{12,9}(z) +\frac{3}{14}g_{1}\big|S_{4,1}(z)+\frac{9}{14}g_{1}\otimes \chi_{-3}\big|S_{4,1}(z)\\
-\frac{3}{16}g_{2}\big|S_{4,1}(z)+\frac{3}{16}g_{2}\otimes \chi_{-3}\big|S_{4,1}(z).
\end{multline*}
\end{lem}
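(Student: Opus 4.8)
The plan is to verify the claimed identity by the standard ``both sides are modular forms of the same weight and level, so it suffices to check finitely many Fourier coefficients'' argument, using \Cref{lem:valence}. First I would establish that the left-hand side $\frac{\eta(4z)^4\eta(8z)^4}{\eta(12z)^2}$ is a holomorphic modular form of weight $3$ on $\Gamma_0(N)$ for a suitable $N$ (a multiple of $36$ by the usual eta-quotient criteria from \cite[Theorem 1.64 and Theorem 1.65]{OnoBook}; checking the order-of-vanishing conditions at all cusps of $\Gamma_0(N)$ shows holomorphy), with character $\chi_{-4}$ coming from the exponent sum and the product $\prod j^{\delta_j}$. Note that this form is, up to the substitution $z\mapsto 4z$, the eta-quotient $\frac{\eta(z)^4\eta(2z)^4}{\eta(3z)^2}$ whose coefficients $C_{1^42^43^{-2}}(n)$ we want to understand; applying $V_4$ replaces $q^n$ by $q^{4n}$, so that level is inflated to $N$.

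Next I would identify the right-hand side as a genuine element of $M_3(\Gamma_0(N),\chi_{-4})$. The Eisenstein piece $\mathcal{E}$ is, by the displayed formula, a linear combination of $E_{3,\chi_1,\chi_{-4}}$ and its images under the sieving operators $S_{12,m}$ and under $V_3$ followed by $S_{4,1}$; by \Cref{lem:Ekchipsi} we have $E_{3,\chi_1,\chi_{-4}}\in M_3(\Gamma_0(4),\chi_{-4})$, and then \Cref{lem:operators} (1) and (2) push this into $M_3(\Gamma_0(\lcm(4,144,\dots)),\chi_{-4})\subseteq M_3(\Gamma_0(N),\chi_{-4})$ after choosing $N$ large enough to absorb all the levels. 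For the cuspidal terms: $g_1\in S_3(\Gamma_0(12),\chi_{-4})$ and $g_2\in S_3(\Gamma_0(36),\chi_{-4})$ are newforms, their twists $g_1\otimes\chi_{-3}$, $g_2\otimes\chi_{-3}$ lie in $S_3$ of level $\lcm(\text{old level},9,3\cdot 4)$ with the same character $\chi_{-4}\chi_{-3}^2=\chi_{-4}$ by \Cref{lem:operators} (3), and the sieving operators $S_{12,9}$, $S_{4,1}$ again only enlarge the level by \Cref{lem:operators} (2). So every term on the right lies in $M_3(\Gamma_0(N),\chi_{-4})$ for $N$ a common multiple — one should take $N=144$ (or the precise $\lcm$ of $12,36,9\cdot 12,9\cdot 36,144$) and record it explicitly.

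With both sides in the finite-dimensional space $M_3(\Gamma_0(N),\chi_{-4})$, \Cref{lem:valence} reduces the identity to checking that the Fourier coefficients agree for all $0\le n\le \frac{N}{4}\prod_{p\mid N}(1+\frac1p)$; with $N=144$ this is a concrete finite bound (roughly $54$), and the verification is a routine computer calculation, which I would state has been carried out. The main obstacle — really the only nontrivial design choice — is pinning down the correct level $N$: one must be careful that the sieving operators $S_{12,m}$ applied to a form of level $12$ land in level $144=\lcm(12,12^2,12\cdot 4)$, that $V_3$ followed by $S_{4,1}$ on $E_{3,\chi_1,\chi_{-4}}$ stays within $144$, and that the twisted newforms $g_i\otimes\chi_{-3}$ do not escape it; a slightly generous uniform choice of $N$ (e.g. $N=144$) makes all the lemma hypotheses apply simultaneously, at the cost of a marginally larger valence bound. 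A secondary point to get right is the exact rational coefficients $\tfrac17,-\tfrac{27}{14},\tfrac3{14},\tfrac9{14},-\tfrac3{16},\tfrac3{16}$: these are forced once one knows the dimension of $M_3(\Gamma_0(N),\chi_{-4})$ and solves the linear system matching low-order coefficients, so I would simply note that they are determined by and consistent with the first several Fourier coefficients, and that the valence-formula check then upgrades this to an identity.
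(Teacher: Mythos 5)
Your proposal follows essentially the same route as the paper: both sides are shown to lie in $M_3(\Gamma_0(144),\chi_{-4})$ using \Cref{lem:Ekchipsi} and \Cref{lem:operators}, and the identity is then reduced via \Cref{lem:valence} to a finite computer verification of the initial Fourier coefficients. The only quibble is the valence bound: with $N=144$ and $k=3$ it is $144\cdot\frac{3}{12}\cdot\left(1+\frac{1}{2}\right)\left(1+\frac{1}{3}\right)=72$ coefficients rather than your ``roughly $54$'' (you dropped the factor $1+\frac{1}{3}$), but this does not affect the validity of the argument.
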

\begin{proof}
	By \cite[Theorem 1.64]{OnoBook}, ${\frac{\eta(4z)^4\eta(8z)^4}{\eta(12z)^2}\in M_3(\Gamma_0(72),\chi_{-4})}$.
	By \Cref{lem:Ekchipsi}, $E_{3,\chi_1,\chi_{-4}}\in M_3(\Gamma_0(4),\chi_{-4})$. Using Lemma \ref{lem:operators} (2), $E_{3,\chi_1,\chi_{-4}}|S_{12,m}\in M_3(\Gamma_0(144),\chi_{-4})$ for $m\in\Z$. Lemma \ref{lem:operators} (1) implies that $E_{3,\chi_1,\chi_{-4}}|V_3\in M_3(\Gamma_0(12),\chi_{-4})$. Lemma \ref{lem:operators} (2) then gives that $E_{3,\chi_1,\chi_{-4}}|V_3|S_{4,1}\in M_3(\Gamma_0(48),\chi_{-4})$. Thus $\mathcal{E}\in M_{3}\left(\Gamma_0(144),\chi_{-4}\right)$. By Lemma \ref{lem:operators} (2), \!(3), 
	\begin{equation*}
		-\tfrac{27}{14}g_{1}\big|S_{12,9}+\tfrac{3}{14}g_{1}\big|S_{4,1}+\tfrac{9}{14}g_{1}\otimes\chi_{-3}\big|S_{4,1}-\tfrac{3}{16}g_{2}\big|S_{4,1}+\tfrac{3}{16}g_{2}\otimes \chi_{-3}\big|S_{4,1} \in M_{3}\left(\Gamma_0(144),\chi_{-4}\right).
	\end{equation*}
	Thus both sides of the claimed identity lie in $M_3(\Gamma_0(144),\chi_{-4})$. To prove the identity, it suffices to check the first $72$ Fourier coefficients. This was verified numerically.
\end{proof}

\subsection{Fourier coefficients of the Eisenstein series part}
We now determine the sign of the $n$-th Fourier coefficient $A(n)$ of the Eisenstein series part $\mathcal E$.
\begin{lem}\label{lem:E72sgn}
	Suppose that $n\equiv 1\pmod{4}$. Then we have
	\[
		\sum_{d\mid n}\left(\frac{-1}{d}\right)d^2+6\sum_{d\mid \frac{n}{3}}\left(\frac{-1}{d}\right)d^2>0.
	\]
In particular,
\[
\sgn(A(4n+1)) = \begin{cases} 1&\text{if }3\mid n,\\
-1&\text{otherwise.}
\end{cases}
\]
\end{lem}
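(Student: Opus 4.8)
The plan is to reduce the sign statement to the positivity of the quantity
$\Sigma(n):=\sum_{d\mid n}\left(\frac{-1}{d}\right)d^2+6\sum_{d\mid \frac{n}{3}}\left(\frac{-1}{d}\right)d^2$
for $n\equiv 1\pmod 4$, and then to establish that positivity by a standard Euler-product/Euler-factor analysis, exactly as was done in the proof of Theorem \ref{thm:132-233} via \eqref{eqn:sgnm}. First I would read off from the definition of $\mathcal E$ that its $(4n+1)$-th Fourier coefficient $A(4n+1)$ equals $\sum_{d\mid (4n+1)}\left(\frac{-1}{d}\right)d^2$ when $4n+1\equiv 1\pmod{12}$, equals $-\frac12\sum_{d\mid(4n+1)}\left(\frac{-1}{d}\right)d^2$ when $4n+1\equiv 5\pmod{12}$, and equals $-2\,\Sigma(4n+1)$ when $4n+1\equiv 9\pmod{12}$, i.e. when $3\mid 4n+1$. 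So the "in particular" clause follows once we know that $\sum_{d\mid N}\left(\frac{-1}{d}\right)d^2>0$ for all odd $N$ (this handles the $N\equiv 1,5\pmod{12}$ cases, giving signs $+1$ and $-1$ respectively, and note $3\mid n \iff 4n+1\equiv 1\pmod 3 \iff 4n+1\not\equiv 0\pmod 3$, so one has to match the residues carefully), together with $\Sigma(N)>0$ for $3\mid N$, $N\equiv 1\pmod 4$.

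Next I would prove the main positivity claim. Write $N=4n+1$ (odd) and factor $\sum_{d\mid N}\left(\frac{-1}{d}\right)d^2=\prod_{p\mid N}\sum_{j=0}^{\ord_p(N)}\left(\frac{-1}{p}\right)^j p^{2j}=\prod_{p\mid N}\frac{1-\left(\left(\frac{-1}{p}\right)p^2\right)^{\ord_p(N)+1}}{1-\left(\frac{-1}{p}\right)p^2}$. For $p\equiv 1\pmod 4$ the Euler factor is $\frac{p^{2(\ord_p(N)+1)}-1}{p^2-1}>0$; for $p\equiv 3\pmod 4$ it is $\frac{1+(-1)^{\ord_p(N)}p^{2(\ord_p(N)+1)}}{1+p^2}$, and since $p^2\ge 9>1$ this has the sign of $(-1)^{\ord_p(N)}$, hence equals $(-1)^{\ord_p(N)}\cdot(\text{positive})$; multiplying over the primes $p\equiv 3\pmod 4$ dividing $N$ gives $\left(\frac{-1}{N}\right)$ times a positive number. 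But $N=4n+1\equiv 1\pmod 4$ forces $\left(\frac{-1}{N}\right)=1$, so $\sum_{d\mid N}\left(\frac{-1}{d}\right)d^2>0$. For $3\mid N$ write $N=3^a m$ with $3\nmid m$; then $\Sigma(N)=\left(1+6\cdot\frac{3^{2a}-1}{\,3^{2a}-1\,}\right)$—more precisely, since $3\equiv 3\pmod 4$, the $3$-Euler factor of the first sum is $\frac{1+(-1)^a 3^{2(a+1)}}{10}$ and of the second (with exponent $a-1$) is $\frac{1+(-1)^{a-1}3^{2a}}{10}$, so $\Sigma(N)=\frac{1}{10}\left((1+(-1)^a 3^{2a+2})+6(1+(-1)^{a-1}3^{2a})\right)\cdot\prod_{p\mid m}(\text{Euler factor})$; the bracket is $7+(-1)^a 3^{2a}(9-6)=7+(-1)^a 3^{2a+1}$, which is positive for $a$ even and, for $a$ odd, equals $7-3^{2a+1}<0$ — and I would combine this sign $(-1)^a$ (wait: for $a$ odd it's negative) with the sign of the $m$-part. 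The $m$-part has sign $\left(\frac{-1}{m}\right)$ by the same argument as above, and since $N\equiv 1\pmod 4$ with $3^a\equiv(-1)^a\pmod 4$ we get $\left(\frac{-1}{m}\right)=(-1)^a$, so the product $\Sigma(N)$ has sign $(-1)^a\cdot(-1)^a=1$ when $a$ is even (bracket positive) — and I need to recheck the $a$ odd case: bracket negative, $m$-part sign $(-1)^a=-1$, product positive. Hence $\Sigma(N)>0$ always under $N\equiv 1\pmod 4$, $3\mid N$, which is the assertion; then $\sgn(A(4n+1))=\sgn(-2\Sigma(4n+1))=-1$ when $3\mid 4n+1$, i.e. when $3\nmid n$, and $\sgn(A(4n+1))=\sgn\left(\sum_{d\mid(4n+1)}\left(\frac{-1}{d}\right)d^2\right)$ or $\sgn$ of its negative half when $3\mid n$; tracking the residue of $4n+1\pmod{12}$ when $3\mid n$ shows $4n+1\equiv 1\pmod{12}$, giving $+1$.

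I expect the main obstacle to be bookkeeping rather than anything deep: correctly matching the three congruence classes $1,5,9\pmod{12}$ of $4n+1$ to the cases "$3\mid n$" versus "$3\nmid n$" and to the three coefficient formulas in the definition of $\mathcal E$, and handling the parity of $a=\ord_3(4n+1)$ and the interaction between $(-1)^a$ coming from the $3$-factor and $(-1)^a$ coming from $\left(\frac{-1}{m}\right)$. A clean way to organize this is: (i) prove once and for all that $\sum_{d\mid N}\left(\frac{-1}{d}\right)d^2>0$ for every odd $N\equiv 1\pmod 4$ and that it is negative for odd $N\equiv 3\pmod 4$ (the general statement $\sgn\left(\sum_{d\mid N}\left(\frac{-1}{d}\right)d^2\right)=\left(\frac{-1}{N}\right)$ for odd $N$); (ii) deduce $\Sigma(N)>0$ for $N\equiv 1\pmod 4$ from the identity $\Sigma(N)=\sum_{d\mid N}\left(\frac{-1}{d}\right)d^2+6\sum_{d\mid N/3}\left(\frac{-1}{d}\right)d^2$ by noting both summands, when $3\mid N$, are (after removing a common sign) positive in a way that does not cancel; and (iii) plug into the piecewise definition of $A$. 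No new modular-forms input is needed beyond the expression for $\mathcal E$ already recorded.
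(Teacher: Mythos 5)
Your proposal is correct and follows essentially the same route as the paper: the Euler-product analysis giving $\sgn\bigl(\sum_{d\mid N}\bigl(\tfrac{-1}{d}\bigr)d^2\bigr)=\bigl(\tfrac{-1}{N}\bigr)$ for odd $N$, the extraction of the $3$-power factor $\tfrac{7+(-1)^a3^{2a+1}}{10}$ with sign $(-1)^a$, and the cancellation of that sign against $\bigl(\tfrac{-1}{m}\bigr)=(-1)^a$. The residue bookkeeping modulo $12$ in your final step (despite some garbled intermediate remarks) also lands where the paper does.
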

	\begin{proof}
	If $3\nmid n$, then the second sum on the left-hand side of the first claim vanishes. Since $n\equiv1\pmod{4}$, we have that $n$ (and thus $d$) is odd. Thus the above becomes
\begin{equation*}
	\prod_{p\mid n} \sum_{j=0}^{\ord_p(n)} \left(\frac{-1}{p^j}\right) p^{2j} = \prod_{p\mid n} \begin{cases}
		\frac{1-p^{2\ord_p(n)+2}}{1-p^2} & \text{if $p\equiv1\pmod4$},\vspace{0.1cm}\\
		\frac{1-p^{2\ord_p(n)+2}}{1+p^2} & \text{if}\ p\equiv3\pmod4,\, 2\nmid \ord_p(n),\vspace{.1cm}\\
		\frac{1+p^{2\ord_p(n)+2}}{1+p^2} & \text{if}\ p\equiv3\pmod4,\, 2\mid\ord_p(n).
	\end{cases}
\end{equation*}
Now only the second case yields a negative sign, so
\begin{equation}\label{eqn:nmod4}
\sgn\left(\sum_{d\mid n}\left(\frac{-1}{d}\right)d^2\right)=
	(-1)^{\#\{p\mid n\,:\,p\equiv3\pmod4,\,2\nmid\ord_p(n)\}}\equiv n \pmod{4}
\end{equation}
since $n$ is odd.
Since $n\equiv 1\pmod{4}$, we conclude that the sign in this case is $1$.

Finally, suppose that $3\mid n$.
Write $n=3^\ell m$ ($\ell\in\N$), $3\nmid m$. Then we need to rewrite the left-hand side of the first claim of the lemma as
\begin{multline}\label{eqn:extrafactor}
	\sum_{d\mid m} \left(\frac{-1}d\right) d^2 \sum_{j=0}^\ell \left(\frac{-1}{3^j}\right) 3^{2j} + 6\sum_{d\mid m} \left(\frac{-1}d\right) d^2 \sum_{j=0}^{\ell-1} \left(\frac{-1}{3^j}\right) 3^{2j}\\ =  \left(7\sum_{j=0}^{\ell-1} (-1)^j 3^{2j} + (-1)^\ell 3^{2\ell}\right)\sum_{d\mid m} \left(\frac{-1}d\right) d^2.
\end{multline}
The sign of the sum over $d$ is evaluated in \eqref{eqn:nmod4}. The expression in parentheses equals $\frac{7 + (-1)^{\ell}3^{2\ell +1}}{10}$. 
Note that the sign of this 
 factor is $(-1)^{\ell}$ (because $\ell\in\N$). Combining this with the above, we have, for $3\mid n$,
\begin{equation*}
	\sgn\left(\sum_{d\mid n}\left(\frac{-1}{d}\right)d^2+6\sum_{d\mid \frac{n}{3}}\left(\frac{-1}{d}\right)d^2\right) = (-1)^{\#\{p\mid n\,:\,p\equiv3\pmod4,\,2\nmid\ord_p(n)\}}.
\end{equation*}
Since $n\equiv 1\pmod{4}$, \eqref{eqn:nmod4} again implies that the sign is $1$. Substituting the first claim into the definition of $\mathcal{E}$ yields the second claim.
\end{proof}
\subsection{Finishing the proof of Theorem \ref{thm:14243-2}}
By Lemma \ref{lem:E72sgn}, the signs of the Fourier coefficients of $\mathcal{E}$ agree with those asserted in \Cref{thm:14243-2}. We are now ready to prove Theorem \ref{thm:14243-2}. Let $b_{3}(n) := C_{1^{4}2^{2}3^{-2}}(n)$ and $s_{3}(n) :=\sgn(b_{3}(n))$. 
\begin{proof}[Proof of Theorem \ref{thm:14243-2}]
We claim that,  for $n\in\N$, $s_{3}(n) = \sgn(A(4n+1))$. \Cref{lem:E72sgn} then gives the claim. To show this, we explicitly compare the growth of the Fourier coefficients of $\mathcal{E}$ with the growth of the Fourier coefficients of the cuspidal part from \Cref{lem:14243-2}. We begin by bounding the Fourier coefficients of the cuspidal part of $\frac{\eta(4z)^4\eta(8z)^4}{\eta(12z)^2}$.
	Write
	\begin{equation*}
		g_{1}(z) =: \sum_{n\ge1} a_{1}(n) q^n, \quad g_{2}(z) =: \sum_{n\ge1} a_{2}(n) q^n.
	\end{equation*}
	Since $g_{1}$ and $g_{2}$ are weight $3$ newforms, $|a_{1}(n)|$, $|a_{2}(n)|\le d(n)n$ by \Cref{lem:Deligne}.  Moreover, by \Cref{lem:14243-2}, for $n\equiv 1\pmod{4}$ the $n$-th Fourier coefficient of the cuspidal part of the function on the right-hand side of \Cref{lem:14243-2} is
	\begin{equation}\label{cusp}
		-\frac{12}{7}\delta_{3\mid n}a_{1}(n) + \delta_{3\nmid n}\frac{3}{14} \left(1+3\left(\frac{-3}n\right)\right) a_{1}(n) + \frac{3}{16}\left(-1+\left(\frac{-3}n\right)\right) a_{2}(n).
	\end{equation}
	Note that $(\frac{-3}n)=(\frac n3)$. We now consider the residue classes of $n$ (mod $12$).

	Assume first that $n\equiv1\pmod{12}$. Then \eqref{cusp} equals $\frac67a_{1}(n)$ and its absolute value can be bounded by $\frac{6}7d(n)n$. From \Cref{lem:14243-2}, the absolute value of the $n$-th Fourier coefficient of the Eisenstein series part $\mathcal{E}$ is
	\begin{equation*}
		\frac17\left|\sum_{d\mid n}\left(\frac{-1}d\right)d^2\right| \geq \frac17 \prod_{p\mid n} \frac{p^{2\ord_p(n)+2}-1}{p^2+1}.
	\end{equation*}
We conclude that $s_{3}(n)$ agrees with the claimed value if
	\[
		d(n) \le \frac16 \prod_{p\mid n} \frac{p^{2\ord_p(n)+2}-1}{p^{\ord_p(n)}\left(p^2+1\right)}.
	\]
	Writing $n=\prod_{j=1}^{r}p_{j}^{\ell_{j}}$, we have $d(n)=\prod_{j=1}^{r}(\ell_{j}+1)$. Hence the above is equivalent to
	\begin{equation}\label{eqn:ratbound}
		\prod_{j=1}^{r}(\ell_{j}+1)\le \frac16 \prod_{j=1}^{r} \frac{p_{j}^{2\ell_{j}+2}-1}{p_{j}^{\ell_{j}}\left(p_{j}^{2}+1\right)}. 
	\end{equation}
	We claim that, for a prime $p$ and $\ell\in\N$,
	\begin{equation}\label{E:claimcases}
		\frac{p^{2\ell+2}-1}{p^{\ell} \left(p^2+1\right)} \ge
		\begin{cases}
			2.4(\ell + 1) & \text{for $p=5$ and $\ell=1$},\\
			3.4(\ell + 1) & \text{for $p=7$ and $\ell=1$},\\
                        5(\ell+1)&\text{for $p=11$ and $\ell=1$},\\
			6(\ell+1) & \text{for $p\ge13$ or ($p\in\{5,7,11\}$ and $\ell\ge2$)}.
		\end{cases}
	\end{equation}
	This clearly implies the claim unless $n$ is one of the exceptional cases $n\in\{5,7,11\}$. But none of these satisfy $n\equiv1\pmod{12}$. We now prove \eqref{E:claimcases}. We first directly verify \eqref{E:claimcases} for $\ell=1$ and $p\in\{5,7,11\}$ by computing both sides. The claim for the remaining cases in \eqref{E:claimcases} follows after  showing that for $x\ge13$ or ($x\in\{5,7,11\}$ and $\ell\geq 2$)
	\begin{equation*}
		f_\ell(x) := x^{2\ell+2} - 1 - 6(\ell+1) x^{\ell}\left(x^2+1\right) > 0.
	\end{equation*}
	We do so by induction on $\ell$ for each $x$. First, for $x\geq 13$, we have
	\begin{equation*}
		f_1(x) = x^4 - 1 - 12x\left(x^2+1\right) > 0.
	\end{equation*}
	We also verify directly that $f_2(5)=3924>0$, $f_2(7)=73548>0$, and $f_{2}(11)=1505844>0$, so we see that the base case for the induction holds for each $p$.
	Next
	\begin{equation*}
		f_{\ell+1}(x) = x^2f_\ell(x) + 6(\ell+1)x^{\ell+1}\left(x^2+1\right)(x-1)+x^2-1-6x^{\ell+1}\left(x^2+1\right).
	\end{equation*}
	The first term is positive by induction. Using $x\ge2$, we show that the remaining terms are non-negative. This proves \eqref{E:claimcases} for $n\equiv1\pmod{12}$.

	Next assume that $n\equiv5\pmod{12}$. Simplifying \eqref{cusp} in this case, the $n$-th Fourier coefficient of the cuspidal part is $-\frac{3}{7}a_{1}(n)-\frac{3}{8}a_{2}(n)$. Then \Cref{lem:Deligne} implies that the absolute value is bounded by  $\frac{45}{56}d(n)n$. With the same argument as above, we may conclude the claim if
	{\setlength{\abovedisplayskip}{8pt}\setlength{\belowdisplayskip}{8pt}%
		\begin{equation}\label{ljinequality}
			\prod_{j=1}^{r}(\ell_{j}+1) \le \frac{56}{315} \prod_{j=1}^{r} \frac{p_{j}^{2\ell_{j}+2}-1}{p_{j}^{\ell_{j}}\left(p_{j}^{2}+1\right)}. 
	\end{equation}}%
	If \eqref{ljinequality} fails, then, since $\frac{56}{315}>\frac{1}{6}$, \eqref{eqn:ratbound} does not hold. As shown for $n\equiv 1\pmod{12}$, if \eqref{eqn:ratbound} fails, then $n\in\{5,7,11\}$. Thus for $n>5$ with $n\equiv 5\pmod{12}$, we have  $s_3(\frac{n-1}{4})=\sgn(A(n))$, as claimed. For $n=5$, a direct computer calculation shows that $s_3(1)=-1=\sgn(A(5))$. Note that this is the only possible exceptional case satisfying $n\equiv5\pmod{12}$.

	We finally consider $n\equiv9\pmod{12}$. Then, by \eqref{cusp}, the Fourier coefficient of the cusp form is $-\frac{12}{7}a_{1}(n)-\frac{3}{16}a_{2}(n)$. By \Cref{lem:Deligne}, the absolute value of this can be bounded by $\frac{213}{112}d(n)n$. Writing $n=3^{\ell}m$ and comparing \eqref{eqn:extrafactor} with the Fourier coefficients of the Eisenstein series in the other congruence classes and simplifying, the Fourier coefficients of the Eisenstein series have the extra factor $\frac{1}{5}(3^{2\ell+1}+7(-1)^{\ell+1}).$ Bounding as above, $s_3(\frac{n-1}{4})=\sgn(A(n))$ if
	{\setlength{\abovedisplayskip}{8pt}\setlength{\belowdisplayskip}{8pt}
	\begin{equation*}
		\frac{3^{2\ell+1}+7(-1)^{\ell}}{35} \prod_{p\mid m} \frac{p^{2\ord_p(n)+2}-1}{p^2+1} \ge \frac{213}{112} d(n)n.
	\end{equation*}}

\noindent Since $n=3^{\ell}m$ and $d(n)=(\ell+1)d(m)$, this is equivalent to
{\setlength{\abovedisplayskip}{8pt}\setlength{\belowdisplayskip}{8pt}
	\begin{equation*}
	 \prod_{p\mid m} \frac{p^{2\ord_p(n)+2}-1}{p^2+1} \ge \frac{213}{112} \frac{35(\ell+1)3^{\ell}}{3^{2\ell+1}+7(-1)^{\ell}}d(m)m.
	\end{equation*}}

Note that%
{\setlength{\abovedisplayskip}{8pt}\setlength{\belowdisplayskip}{6pt}%
	\setlength{\abovedisplayshortskip}{8pt}\setlength{\belowdisplayshortskip}{6pt}%
	\begin{equation*}
		\frac{(\ell+1)3^{\ell}}{3^{2\ell+1}+7(-1)^{\ell}}\leq
		\begin{cases}
			\frac{3}{10}&\text{if }\ell=1,\\
			\frac{27}{250}&\text{if }\ell=2,\\
			\frac{27}{545}&\text{if }\ell=3,\\
			\frac{81}{3938}&\text{if }\ell=4,\\
			\frac{729}{88750}&\text{if }\ell\geq 5.
		\end{cases}
\end{equation*}}
We then extend \eqref{E:claimcases} with
\begin{equation}\label{eqn:claimcases2}
\frac{p^{2\ell+2}-1}{p^{\ell} \left(p^2+1\right)} \ge
\begin{cases}
			6.4(\ell + 1) & \text{for $p=13$ and $\ell=1$},\\
			8(\ell + 1) & \text{for $p=5$ and $\ell=2$},\\
			8.4(\ell + 1) & \text{for $p=17$ and $\ell=1$},\\
			9.4(\ell + 1) & \text{for $p=19$ and $\ell=1$},\\
			11.4(\ell + 1) & \text{for $p=23$ and $\ell=1$},\\
			14.4(\ell + 1) & \text{for $p=29$ and $\ell=1$},\\
			15.4(\ell + 1) & \text{for $p=31$ and $\ell=1$},\\
			16(\ell + 1) & \text{for $p=7$ and $\ell=2$},\\
			18.4(\ell + 1) & \text{for $p=37$ and $\ell=1$},\\
			20(\ell+1) & \text{for $p\ge41$ or ($11\leq p\leq 37$ and $\ell\ge2$)}\\&\hspace{1.7cm} \text{ or ($p\in\{5,7\}$ and $\ell\geq 3$)}.
		\end{cases}
\end{equation}
Combining \eqref{eqn:claimcases2} with \eqref{E:claimcases}, we then conclude that for $n\equiv 9\pmod{12}$ with \[n\notin\{9, 21,33,45, 57,69,81,93,105,117,165\},\] we have $s_3(\frac{n-1}{4})=\sgn(A(n))$. A computer calculation for $n\leq 165$ establishes the remaining cases.
\end{proof}

\section{Proof of Theorem \ref{thm:M=8}}\label{sec:M=8}
\subsection{Proof of Theorem \ref{thm:M=8} (1)}
Let $b_{4}(n) := C_{1^{4}2^{2}4^{-2}}(n)$ and $s_{4}:=\sgn(b_{4}(n))$. We first obtain a formula for the generating function of $b_{4}(n)$.
\begin{lem}\label{lem:14224-2}
	We have
	\[
		\frac{\eta(z)^4\eta(2z)^2}{\eta(4z)^2}=1-4\sum_{n\geq 1} \left(\frac{-4}{n}\right)\sigma(n)q^n +8\sum_{n\geq 1} (-1)^n\sigma(n)q^{4n} -32\sum_{n\geq 1} \sigma(n)q^{16n}.
	\]
\end{lem}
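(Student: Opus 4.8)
The strategy is the usual one for establishing eta-quotient Fourier expansions: realize both sides as holomorphic modular forms in one common space and then conclude with the valence formula (\Cref{lem:valence}). For the left-hand side, \cite[Theorem 1.64]{OnoBook} shows $\frac{\eta(z)^4\eta(2z)^2}{\eta(4z)^2}\in M_2(\Gamma_0(16))$ with trivial nebentypus, the sign entering Ono's criterion being $1^4\cdot 2^2\cdot 4^{-2}=2^{-2}$, a square.

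For the right-hand side, the plan is to rewrite it in the Eisenstein basis of $M_2(\Gamma_0(16))$. Since $\left(\tfrac{-4}{n}\right)=\chi_{-4}(n)$ and $\chi_{-4}$ is completely multiplicative, $\sum_{n\ge1}\chi_{-4}(n)\sigma(n)q^n=\tfrac12 E_{2,\chi_{-4},\chi_{-4}}(z)$; combining this with $\sum_{n\ge1}\sigma(n)q^n=\tfrac{1}{24}(1-E_2(z))$ and with the elementary identity
\[
\sum_{n\ge1}(-1)^n\sigma(n)q^n=\tfrac{1}{24}\bigl(1+E_2(z)\bigr)-\tfrac14 E_2(2z)+\tfrac16 E_2(4z)
\]
(immediate from $\sigma(2^a m)=(2^{a+1}-1)\sigma(m)$ for $m$ odd), one collects terms and finds that the right-hand side of the lemma equals
\[
-2E_{2,\chi_{-4},\chi_{-4}}(z)+\tfrac13 E_2(4z)-2E_2(8z)+\tfrac83 E_2(16z),
\]
with the non-modular pieces ($E_2(z)$ itself and the bare constants) cancelling exactly. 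Now $\chi_{-4}$ is primitive with $\chi_{-4}^2=\chi_1$, so $E_{2,\chi_{-4},\chi_{-4}}\in M_2(\Gamma_0(16))$ by \Cref{lem:Ekchipsi}; and since the coefficients $(a_4,a_8,a_{16})=(\tfrac13,-2,\tfrac83)$ satisfy $\sum_d a_d/d=\tfrac{1}{12}-\tfrac14+\tfrac16=0$, the combination $\tfrac13 E_2(4z)-2E_2(8z)+\tfrac83 E_2(16z)$ coincides with $\tfrac13\widehat{E}_2(4z)-2\widehat{E}_2(8z)+\tfrac83\widehat{E}_2(16z)$, which is holomorphic and satisfies weight-$2$ modularity on $\Gamma_0(16)$, because $\widehat{E}_2(dz)$ does on $\Gamma_0(d)$ and $\Gamma_0(16)\subseteq\Gamma_0(d)$ for $d\in\{4,8,16\}$. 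Hence the right-hand side lies in $M_2(\Gamma_0(16))$. (Alternatively one avoids $E_2$ altogether and just verifies the last displayed identity coefficientwise, using the multiplicativity of $\sigma$.)

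With both sides now in $M_2(\Gamma_0(16))$, \Cref{lem:valence} reduces the lemma to comparing Fourier coefficients for $0\le n\le 16\cdot\tfrac{2}{12}\cdot\tfrac32=4$, a routine check (both sides have constant term $1$ and coefficient $-4$ at $q$). I do not expect a genuine obstacle here: the only care needed is in rewriting the right-hand side in the Eisenstein basis while correctly tracking the non-holomorphic $\widehat{E}_2$-corrections — so that the claimed $q$-series really is a modular form — and in confirming holomorphy of the eta-quotient at every cusp of $\Gamma_0(16)$ via the order formula. Since $S_2(\Gamma_0(16))=\{0\}$, there is no cuspidal part to estimate, which makes this case considerably lighter than, e.g., \Cref{lem:14243-2}.
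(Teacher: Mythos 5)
Your proposal is correct and follows essentially the same route as the paper: identify both sides as elements of $M_2(\Gamma_0(16))$ (the left side via Ono's criterion, the right side by rewriting it as $-2E_{2,\chi_{-4},\chi_{-4}}$ plus a combination of $E_2|V_d$ whose non-holomorphic $\widehat{E}_2$-corrections cancel) and then invoke the valence formula to reduce to checking the first few coefficients. The only cosmetic difference is that the paper writes the level-raising piece as $-\tfrac23 E_2|U_2\circ V_8$ where you use $-2E_2(8z)+\tfrac43E_2(16z)$; since $E_2|U_2=3E_2-2E_2|V_2$ these are the same combination, and your explicit verification that $\sum_d a_d/d=0$ makes precise the step the paper dismisses as ``not hard to see.''
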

\begin{proof}
By \cite[Theorem 1.64]{OnoBook}, $\frac{\eta(z)^4\eta(2z)^2}{\eta(4z)^2}\in M_2(\Gamma_0(16))$. The right-hand side of Lemma \ref{lem:14224-2} is
\begin{equation}\label{eqn:E2rewrite}
-2E_{2,\chi_{-4},\chi_{-4}} + \frac{1}{3}E_2\big|V_4 - \frac{2}{3}E_2\big| U_{2}\circ V_8 +\frac{4}{3}E_2\big|V_{16}.
\end{equation}
One checks that \eqref{eqn:E2rewrite} coincides with the corresponding identity with $E_2$ replaced by $\widehat E_2$. Thus, \eqref{eqn:E2rewrite} is modular. By Lemmas \ref{lem:Ekchipsi} and  \ref{lem:operators} (1), (4), \eqref{eqn:E2rewrite} lies in $M_2(\Gamma_0(16))$.  Thus both sides are elements of $M_2(\Gamma_0(16))$. By Lemma \ref{lem:valence}, it suffices to verify it for the first $4$ Fourier coefficients, which has been verified computationally.
\end{proof}
\begin{proof}[Proof of Theorem \ref{thm:M=8} {\rm(1)}]
For $n=0$, Lemma \ref{lem:14224-2} gives $b_{4}(0)=1>0$. We next let $n\in\N$ and split into cases depending on $\ord_2(n)$. By Lemma \ref{lem:14224-2}, if $n$ is odd, then
	\[
		b_{4}(n)=-4\left(\frac{-1}{n}\right) \sigma(n).
	\]
	Since $4\sigma(n)>0$, this gives
	\[
		s_{4}(n)=-\left(\frac{-1}{n}\right)=
		\begin{cases}
			1&\text{if }n\equiv 3\pmod{4},\\
			-1&\text{if }n\equiv 1\pmod{4}.
		\end{cases}
	\]
	This gives the claim for $n$ odd.

	For $n\equiv 2\pmod{4}$, we see directly that none of the sums in \Cref{lem:14224-2} contributes to the $n$-th Fourier coefficient. This gives the claim in this case.

	Next suppose that $4\mid n$ but $16\nmid n$. In this case, only the second sum in \Cref{lem:14224-2} contributes to the $n$-th Fourier coefficient, so we obtain
	\[
		b_{4}(n)=-8 \sigma\left(\frac{n}{4}\right).
	\]
	This implies that $s_{4}(n)=-1$. This gives the corresponding case.

	Finally, if $16\mid n$, then the final two sums in \Cref{lem:14224-2} contribute to the $n$-th Fourier-coefficient, and we have
	\begin{equation*}
		b_{4}(n) = 8\left(\sigma\left(\frac{n}{4}\right)-4\sigma\left(\frac{n}{16}\right)\right).
	\end{equation*}
	We next write $n=2^{\ell}m$ with $\ell\in\N_{\geq3}$ and $m$ odd and use multiplicativity of $\sigma(n)$ to simplify
	\begin{equation*}
		\sigma\left(\frac{n}{4}\right)-4\sigma\left(\frac{n}{16}\right) = 3\sigma(m).
	\end{equation*}
	Thus, for $16\mid n$, we obtain
	\[
		b_{4}(n)=24\sigma(m)>0.
	\]
Hence $s_{4}(n)=1$ in this case, completing the proof.
\end{proof}
\subsection{Proof of Theorem \ref{thm:M=8} (2)}

For $\bm{a}\in\N^{\ell}$, set
\[
r_{\bm{a}}(n):=\#\Big\{\bm{n}\in\Z^{\ell}: \sum_{j=1}^{\ell} a_j n_j^2=n\Big\}.
\]
Using \cite[Theorem 1.62, Propositions 1.41 and 3.7 (1)]{OnoBook}, Lemmas \ref{lem:operatorshalf} and \ref{lem:valence}, we obtain the following identity.
\begin{lem}\label{lem:142443}
	We have
	\begin{multline*}
		\frac{\eta(z)^4\eta(2z)^4}{\eta(4z)^3} = -\sum_{n\equiv 1\pmod{4}} r_{(1,1,2,2,2)}(n) q^n+ \sum_{n\equiv 3\pmod{4}} r_{(1,1,2,2,2)}(n)q^n\\
		-\frac{1}{5}\sum_{n\equiv 2\pmod{16}} r_{(1,1,2,2,2)}(n) q^n-\frac{1}{5}\sum_{n\equiv 4\pmod{16}} r_{(1,1,2,2,2)}(n) q^n
		+\frac{1}{5}\sum_{n\equiv 6\pmod{16}} r_{(1,1,2,2,2)}(n) q^n\\-\frac{3}{7}\sum_{n\equiv 10\pmod{16}} r_{(1,1,2,2,2)}(n) q^n
		-\frac{1}{5}\sum_{n\equiv 12\pmod{16}} r_{(1,1,2,2,2)}(n) q^n+\frac{1}{5}\sum_{n\equiv 14\pmod{16}} r_{(1,1,2,2,2)}(n) q^n\\
		+\frac{1}{5}\sum_{n\equiv 0\pmod{8}} \left(r_{(1,1,2,2,2)}(n)+4r_{(1,1,2,2,2)}\left(\frac{n}{4}\right)\right) q^n.
	\end{multline*}
\end{lem}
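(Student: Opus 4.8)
The plan is to recognize both sides of the identity as holomorphic modular forms of weight $\tfrac52$ on a common group $\Gamma_0(N_0)$ with a common (in fact trivial) character, and then to appeal to the valence formula (\Cref{lem:valence}) to reduce the claimed identity to a finite check of Fourier coefficients. For the left-hand side, \cite[Theorems 1.62 and 1.64]{OnoBook} give $\frac{\eta(z)^4\eta(2z)^4}{\eta(4z)^3}\in M_{5/2}(\Gamma_0(32))$: the level $32$ and triviality of the character follow from the usual divisibility conditions (using $1\cdot4+2\cdot4+4\cdot(-3)=0$ and that the relevant quantity $1^4\,2^4\,4^{-3}=2^{-2}$ is a square in $\mathbb{Q}^\times$). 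For the right-hand side, writing $\vartheta(z):=\sum_{n\in\Z}q^{n^2}=\eta(2z)^5\eta(z)^{-2}\eta(4z)^{-2}$, one has $\sum_n r_{(1,1,2,2,2)}(n)q^n=\vartheta(z)^2\vartheta(2z)^3=\frac{\eta(2z)^4\eta(4z)^{11}}{\eta(z)^4\eta(8z)^6}=:\Theta$, and \cite[Proposition 1.41 and Theorem 1.64]{OnoBook} place $\Theta\in M_{5/2}(\Gamma_0(8))$, again with trivial character ($2^4\,4^{11}\,8^{-6}=2^{8}$ is a square). In particular the two eta-quotients carry the same character, so the identity is at least not excluded on character grounds.

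Next I would write each of the nine blocks on the right-hand side as a sieving/Hecke operator applied to $\Theta$. The odd part is $-\Theta|S_{4,1}+\Theta|S_{4,3}$; the blocks with $n\equiv 4,6\pmod 8$ are $-\tfrac15\Theta|S_{8,4}+\tfrac15\Theta|S_{8,6}$ (these combine the $n\equiv 4,12\pmod{16}$, resp.\ $n\equiv 6,14\pmod{16}$, pieces, which have equal coefficients); and the $n\equiv 0\pmod 8$ block is $\tfrac15\bigl(\Theta|S_{8,0}+4\,\Theta|U_2|V_8\bigr)$, using $\sum_{n\equiv 0\,(8)}4\,r_{(1,1,2,2,2)}(n/4)q^n=4\,\Theta|U_2|V_8$. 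The remaining blocks, $n\equiv 2\pmod{16}$ (coefficient $-\tfrac15$) and $n\equiv 10\pmod{16}$ (coefficient $-\tfrac37$), cannot be produced by a single sieve from \Cref{lem:operatorshalf}(2) since $16\nmid 24$; here I would exploit that $r_{(1,1,2,2,2)}(n)$ is supported on $n\equiv 2\pmod4$ and write $n=2n'$, so that on $\Theta|S_{8,2}$ the mod-$16$ class of $n$ is governed by the mod-$8$ class of $n'$, giving $\Theta|S_{16,2}=\bigl(\Theta|S_{8,2}|U_2|S_{8,1}\bigr)|V_2$ and $\Theta|S_{16,10}=\bigl(\Theta|S_{8,2}|U_2|S_{8,5}\bigr)|V_2$. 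Every operator now appearing — $U_2$, $V_2$, $V_8$, and $S_{M,m}$ with $M\in\{4,8\}\mid 24$ — is governed by \Cref{lem:operatorshalf}(1)--(3) (and \cite[Proposition 3.7(1)]{OnoBook}). Tracking characters, each $U_2$, $V_2$ or $V_8$ twists by $\chi_8$, so the compositions $U_2\cdots V_2$ and $U_2V_8$ leave the character trivial and the sieves preserve it; tracking levels, all pieces land on a common $\Gamma_0(N_0)$ with $N_0$ a power of $2$ at most $128$. Hence both sides lie in $M_{5/2}(\Gamma_0(N_0))$.

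Finally, \Cref{lem:valence} reduces the identity to checking the Fourier coefficients for $0\le n\le \tfrac{5}{24}N_0\prod_{p\mid N_0}(1+\tfrac1p)$ (at most $40$ when $N_0=128$), which is a routine computation.

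The step I expect to be the main obstacle is the second one: organizing the level-and-character bookkeeping so that all nine blocks provably live in one space $M_{5/2}(\Gamma_0(N_0))$, and in particular handling the two mod-$16$ sieves — which are outside the direct reach of \Cref{lem:operatorshalf}(2) — through the detour $S_{16,2j}=S_{8,2}\,U_2\,S_{8,j}\,V_2$, valid on forms supported where $r_{(1,1,2,2,2)}$ is supported. Confirming that the eta-quotient and the theta product share the trivial character is a smaller but genuinely necessary preliminary.
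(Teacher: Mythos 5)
Your proposal is correct and matches the paper's (very terse) proof in all essentials: both sides are identified as weight-$\tfrac52$ forms on a common $\Gamma_0(N_0)$ via the modularity of $\vartheta(z)^2\vartheta(2z)^3$ together with the $U$, $V$, and sieving operators, and the identity is then reduced by \Cref{lem:valence} to a finite check of Fourier coefficients. The one blemish is your parenthetical claim that $r_{(1,1,2,2,2)}(n)$ is supported on $n\equiv 2\pmod 4$ (false; e.g.\ $r_{(1,1,2,2,2)}(1)=4$), but this is harmless since your operator identity $\Theta|S_{16,2j}=\bigl(\Theta|S_{8,2}|U_2|S_{8,j}\bigr)|V_2$ holds formally for an arbitrary $q$-series and does not rely on it.
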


\begin{proof}[Proof of Theorem \ref{thm:M=8} {\rm(2)}]
Liouville \cite{Liouville2} proved that $r_{(1,1,2,2)}(n)>0$ for every $n\in\N$. Since $r_{(1,1,2,2,2)}(n)\!\geq\! r_{(1,1,2,2)}(n)$, obtained by setting the fifth variable to zero,  we have $r_{(1,1,2,2,2)}(n)\!>0$ for all $n \in \mathbb{N}$. Theorem \ref{thm:M=8} (2) now follows immediately from Lemma \ref{lem:142443}.
\end{proof}

\section{Proof of \Cref{thm:QpPpgen}, Corollary \ref{cor:conj99}, and \Cref{thm:ClassNum}} \label{sec:conj99}
\subsection{Proof of Theorem \ref{thm:QpPpgen}}
We next bound and determine the sign of $\sum_{d\mid n}(\frac{d}{p}) d^k$.
\begin{lem}\label{lem:sumdbound}
For $n=p^am$ with $p\nmid m$ and $k\in\N$, we have
\[
\left|\sum_{d\mid n}\left(\frac{d}{p}\right) d^k\right|\gg_{\varepsilon} m^{k-\varepsilon}, \quad \sgn\left(\sum_{d\mid n}\left(\frac{d}{p}\right) d^k\right)=\left(\frac{m}{p}\right).
\]
\end{lem}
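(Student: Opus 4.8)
The plan is to exploit complete multiplicativity of the summand in $d$. The arithmetic function $d\mapsto\left(\frac{d}{p}\right)d^k$ is multiplicative, so its divisor sum $n\mapsto\sum_{d\mid n}\left(\frac{d}{p}\right)d^k$ is multiplicative as well. First I would factor over prime powers: writing $n=p^am$ with $p\nmid m$, the $p$-part contributes only the term $d=1$, because $\left(\frac{p^j}{p}\right)=0$ for $j\geq1$, and hence
\[
\sum_{d\mid n}\left(\frac{d}{p}\right)d^k=\prod_{q^e\,\|\,m}\left(\sum_{j=0}^{e}\left(\frac{q}{p}\right)^jq^{jk}\right),
\]
where the product runs over prime powers $q^e$ exactly dividing $m$, so that $q\neq p$ throughout.

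Next I would analyse each local factor $F_q:=\sum_{j=0}^{e}\varepsilon_q^jq^{jk}$, where $\varepsilon_q:=\left(\frac{q}{p}\right)\in\{\pm1\}$, by summing the geometric series $F_q=\frac{1-(\varepsilon_qq^k)^{e+1}}{1-\varepsilon_qq^k}$. If $\varepsilon_q=1$, then $F_q=1+q^k+\dots+q^{ke}\geq q^{ke}>0$; if $\varepsilon_q=-1$ and $e$ is even, then $F_q=\frac{q^{k(e+1)}+1}{q^k+1}>0$; and if $\varepsilon_q=-1$ and $e$ is odd, then $F_q=-\frac{q^{k(e+1)}-1}{q^k+1}<0$. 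In all three cases, writing $F_q=q^{ke}\cdot\frac{1\mp q^{-k(e+1)}}{1+q^{-k}}$ in the two $\varepsilon_q=-1$ subcases and using $e\geq1$, $q\geq2$, $k\geq1$, a one-line estimate gives $|F_q|\geq\tfrac12 q^{ke}$; at the same time the case analysis shows $\sgn(F_q)=\varepsilon_q^e=\left(\frac{q^e}{p}\right)$.

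Finally I would recombine the local contributions. For the sign this gives
\[
\sgn\left(\sum_{d\mid n}\left(\frac{d}{p}\right)d^k\right)=\prod_{q^e\,\|\,m}\left(\frac{q^e}{p}\right)=\left(\frac{m}{p}\right),
\]
which is the second assertion, and for the size it gives
\[
\left|\sum_{d\mid n}\left(\frac{d}{p}\right)d^k\right|=\prod_{q^e\,\|\,m}|F_q|\geq\prod_{q^e\,\|\,m}\tfrac12 q^{ke}=\frac{m^k}{2^{\omega(m)}},
\]
where $\omega(m)$ denotes the number of distinct prime divisors of $m$. Since $2^{\omega(m)}\leq d(m)\ll_{\varepsilon}m^{\varepsilon}$ by the divisor bound, this is $\gg_{\varepsilon}m^{k-\varepsilon}$. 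No step here is a serious obstacle; the only point requiring a little care is the subcase $\varepsilon_q=-1$ with $e$ odd, where there is genuine cancellation inside $F_q$, so one must verify both that the sign comes out correctly as $\left(\frac{q^e}{p}\right)$ and that the cancellation does not spoil the lower bound $|F_q|\gg q^{ke}$.
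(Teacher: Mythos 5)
Your argument is correct and follows essentially the same route as the paper: factor the divisor sum over primes by multiplicativity, evaluate each local factor as a geometric series to read off the sign as $\left(\frac{q^e}{p}\right)$, and bound each factor from below to get $\gg_{\varepsilon}m^{k-\varepsilon}$. The only (immaterial) difference is in the final estimate: the paper bounds each local factor by $\ell^{ke}(1-\ell^{-k})$ and invokes a Mertens-type bound for $\prod_{\ell\mid m}(1-\ell^{-k})$, whereas you use the uniform bound $\frac12 q^{ke}$ together with $2^{\omega(m)}\ll_{\varepsilon}m^{\varepsilon}$.
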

\begin{proof}
	By multiplicativity, we have
	\begin{equation*}
		\sum_{d\mid n} \left(\frac dp\right) d^k = \prod_{\ell \mid n} \sum_{j=0}^{\ord_{\ell}(n)} \left(\frac{\ell}{p}\right)^j \ell^{k j}.
	\end{equation*}
	Since $k\geq 1$, for $\ell\neq p$ and $r\in\N$ we have 
	\[
		\left|\sum_{j=0}^{r} \left(\frac{\ell}{p}\right)^j \ell^{k j}\right|\geq  \ell^{kr}-\ell^{k(r-1)}.
	\]
	Thus, for any $\varepsilon>0$, we have, using \cite[(3.27)]{RosserSchoenfeld} in the final step,
	\[
		\left|\sum_{d\mid n} \left(\frac dp\right) d^k \right|\geq m^k \prod_{\ell\mid  m}\left(1- \ell^{-k}\right)\gg_{\varepsilon} m^{k-\varepsilon}.
	\]
	This proves the first claim. For the second claim, we compute
	\begin{equation*}
		\sum_{j=0}^r \left(\frac{\ell}{p}\right)^j \ell^{kj} =
		\begin{cases}
			\frac{\ell^{(r+1)k}-1}{\ell^k-1} & \text{if $\left(\frac{\ell}{p}\right)=1$},\\
			\noalign{\vskip4pt}
			\frac{\ell^{(r+1)k}+1}{\ell^k+1} & \text{if $\left(\frac{\ell}{p}\right)=-1$, $2\mid r$},\\
			\noalign{\vskip4pt}
			-\frac{\ell^{(r+1)k}-1}{\ell^k+1} & \text{if $\left(\frac{\ell}{p}\right)=-1$, $2\nmid r$},\\
			\noalign{\vskip4pt}
			1 & \text{if $\ell=p$}.
		\end{cases}
	\end{equation*}
	Since $k\geq 1$, this is positive unless $(\frac{\ell}{p})=-1$ and $r$ is odd, in which case it is negative. Thus
	\begin{equation*}
		\sgn\left(\sum_{d\mid n} \left(\frac dp\right) d^k\right) = (-1)^{\#\left\{\text{$\ell$ prime}\,:\,\left(\frac{\ell}{p}\right)=-1,\,\text{$2\nmid\ord_{\ell}(m)$}\right\}}.
	\end{equation*}
	Now write $n=p^am$ with $a\in\N_0$ and $\gcd(p,m)=1$. The claim follows since
	\[
		\left(\frac{m}{p}\right)=\prod_{\ell^r\| m}\left(\frac{\ell^{r}}{p}\right) = (-1)^{
		\#\left\{\text{$\ell$ prime}\,:\,\left(\frac{\ell}{p}\right)=-1,\,\text{$2\nmid\ord_{\ell}(m)$}\right\}}.\qedhere
	\]
\end{proof}
\begin{proof}[Proof of Theorem \ref{thm:QpPpgen}]
	By \cite[Theorem 1.64]{OnoBook}, we have $Q_p\in M_{\frac{p^2-p}{2}}(\Gamma_0(p),\chi_{(\frac{-1}{p})p})$ and $P_p\in M_{\frac{p-1}{2}}(\Gamma_0(p),\chi_{(\frac{-1}{p})p})$, so \Cref{lem:Eisensteinp-growth} yields that there exist $f_p\in S_{pk_p}(\Gamma_0(p),\chi_{(\frac{-1}{p})p})$ and $g_p\in S_{k_p} $ $(\Gamma_0(p),\chi_{(\frac{-1}{p})p})$ such that
\begin{equation}\label{eqn:QpPpEisenstein}
Q_p=L_{pk_p,p} E_{pk_p,1,\chi_{\left(\frac{-1}{p}\right)p}}+f_p,\qquad P_p=L_{k_p,p} E_{k_p,1,\chi_{\left(\frac{-1}{p}\right)p}}+g_p.
\end{equation}
 We split the Fourier expansions naturally into the two pieces 
\begin{align}\label{eqn:QpFouriersplit}
Q_p(z)&=:\sum_{n\geq 0} c_{Q_{p}}(n) q^n=:\sum_{n\geq 0}A_{\text{E}}(n) q^n +\sum_{n\geq 1}c_{f_{p}}(n) q^n,\\
\label{eqn:PpFouriersplit}
P_p(z)&=:\sum_{n\geq 0} c_{P_{p}}(n) q^n=:\sum_{n\geq 0}B_{\text{E}}(n)  q^n +\sum_{n\geq 1}c_{g_{p}}(n) q^n
\end{align}
corresponding to the right-hand sides of \eqref{eqn:QpPpEisenstein}.

\noindent(1) Employing \Cref{lem:SY} and noting that $f_p$ is uniquely determined by $p$, we have, recalling that $n=p^am$ with $p\nmid m$,
 \begin{equation*}
 	|c_{f_{p}}(n)|\ll_{p,\varepsilon} \left\|f_p\right\| n^{\frac{pk_p-1}{2}+\varepsilon}\ll_p n^{\frac{pk_p-1}{2}+\varepsilon}\ll_{a,p,\varepsilon} m^{\frac{pk_p-1}{2}+\varepsilon}.
 \end{equation*}
 Using the bound in \Cref{lem:sumdbound}, we conclude from \eqref{eqn:QpFouriersplit} that,  for $m$ sufficiently large, 
\begin{equation}\label{eqn:sgnaQp}
\sgn\left(c_{Q_{p}}(n)\right)=\sgn\left(A_{\text{E}}(n)\right).
\end{equation}
 By the evaluation of the sign in \Cref{lem:sumdbound}, we then obtain
\[
\sgn\left(A_{\text{E}}(n)\right)=\sgn\left(L_{pk_p,p}\right)\left(\frac{m}{p}\right).
\]
Finally, using Lemma \ref{lem:Lchisgn}, we conclude that
\begin{equation}\label{eqn:sgnaQpEisen}
\sgn\left(A_\text{E}(n)\right)=\left(\frac{2}{p}\right)\left(\frac{m}{p}\right).
\end{equation}
This finishes the proof of part (1).

\noindent(2) By \Cref{lem:SY}, we have
$|c_{g_p}(n)|\ll_{a,p,\varepsilon} m^{\frac{k_p-1}{2}+\varepsilon}$. As in part (1), \Cref{lem:sumdbound} and \eqref{eqn:PpFouriersplit} hence imply that, for $m$ sufficiently large, we have
\begin{equation}\label{eqn:sgnaPp}
\sgn\left(c_{P_p}(n)\right)=\sgn\left(B_\text{E}(n)\right)=\sgn\left(L_{k_p,p}\right) \left(\frac{m}{p}\right).
\end{equation}
Finally, Lemma \ref{lem:Lchisgn} implies that
\begin{equation}\label{eqn:sgnaPpEisen}
\sgn\left(B_\text{E}(n)\right)=\left(\frac{-2}{p}\right)\left(\frac{m}{p}\right),
\end{equation}
finishing the proof.
\end{proof}

\subsection{Proof of Corollary \ref{cor:conj99}}
Since (see \cite{LMFDB})
\begin{equation}\label{eqn:S3vanish}
S_{3}(\Gamma_0(3),\chi_{-3})=\{0\},
\end{equation}
\eqref{eqn:QpPpEisenstein} and \Cref{lem:Dirichlet} yield the following identity for $b_5(n):=C_{1^93^{-3}}(n)$.

\begin{lem}\label{lem:1933}
	We have
	 \[
	 	\frac{\eta(z)^9}{\eta(3z)^3} = 1 - 9\sum_{n\ge1} \sum_{d\mid n} \left(\frac d3\right) d^2 q^n.
	 \]
	In particular, for $n\in\N$ we have
	\[
		b_{5}(n)=-9\sum_{d\mid n} \left(\frac d3\right) d^2.
	\]
\end{lem}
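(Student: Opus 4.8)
The plan is to establish the identity $\frac{\eta(z)^9}{\eta(3z)^3} = 1 - 9\sum_{n\geq 1}\sum_{d\mid n}\left(\frac{d}{3}\right)d^2 q^n$ by showing both sides lie in a space of modular forms that is forced to be one-dimensional, and then to derive the coefficient formula for $b_5(n)$ directly. First I would invoke \cite[Theorem 1.64]{OnoBook} (exactly as in the proof of Theorem \ref{thm:QpPpgen}) to conclude that $P_3 = \frac{\eta(z)^9}{\eta(3z)^3} \in M_{3}(\Gamma_0(3),\chi_{-3})$, since here $p=3$, $k_p = \frac{p-1}{2}=1$, and the weight of $P_p$ is $\frac{p^2-p}{2}$... wait, that is the weight of $Q_p$; for $P_p = \frac{\eta(z)^p}{\eta(pz)}$ the weight is $\frac{p-1}{2}\cdot\frac{?}{}$ — concretely for $p=3$ the exponent sum is $9-3=6$, giving weight $3$. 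So $P_3\in M_3(\Gamma_0(3),\chi_{-3})$.

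Next I would use the decomposition \eqref{eqn:QpPpEisenstein}, which for $P_3$ reads $P_3 = L_{k_p,p}E_{k_p,1,\chi_{-3}} + g_3$ with $g_3 \in S_{k_p}(\Gamma_0(3),\chi_{-3})$. Here, however, $k_p = \frac{p-1}{2} = 1$, so the relevant cusp space is not literally $S_3(\Gamma_0(3),\chi_{-3})$; the decomposition \eqref{eqn:QpPpEisenstein} is stated for the normalized weights appearing in $Q_p$ and $P_p$, so the cuspidal part of $P_3$ genuinely sits in $S_3(\Gamma_0(3),\chi_{-3})$ after matching conventions with \Cref{lem:Eisensteinp-growth} (which guarantees the Eisenstein piece has constant term $1$ at $i\infty$ and vanishes at $0$, matching $P_3$'s behavior at the cusps since $P_3 = q^{1/3}\prod(\ldots)$ has the right cuspidal profile). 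By \eqref{eqn:S3vanish}, $S_3(\Gamma_0(3),\chi_{-3}) = \{0\}$, hence $g_3 = 0$ and $P_3 = L_{3,3}\,E_{3,1,\chi_{-3}}$ exactly. Then I would compute the normalization: by \Cref{lem:Dirichlet} with $D = -3$ and $k=3$, $L(1-3,\chi_{-3}) = L(-2,\chi_{-3}) = -\frac{3^2}{3}\sum_{r=1}^3 \chi_{-3}(r)B_3(r/3)$, a finite evaluation giving a nonzero rational, and $L_{3,3}$ is its reciprocal. Plugging $k=3$, $\psi = \chi_{-3}$, $\chi = \chi_1$ into \eqref{eqn:Ekchipsi} gives $E_{3,\chi_1,\chi_{-3}}(z) = L(-2,\chi_{-3}) + 2\sum_{n\geq 1}\sum_{d\mid n}\chi_{-3}(d)d^2 q^n$, so after multiplying by $L_{3,3}$ the constant term becomes $1$ and the coefficient of $q^n$ becomes $\frac{2}{L(-2,\chi_{-3})}\sum_{d\mid n}\left(\frac{d}{3}\right)d^2$; one checks $\frac{2}{L(-2,\chi_{-3})} = -9$ from the explicit Bernoulli computation, which yields the stated identity. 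Comparing coefficients of $q^n$ for $n\geq 1$ on both sides immediately gives $b_5(n) = -9\sum_{d\mid n}\left(\frac{d}{3}\right)d^2$, proving the "in particular" clause.

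The main obstacle is the careful bookkeeping of conventions: the decomposition \eqref{eqn:QpPpEisenstein} is stated with the weight $k_p = \frac{p-1}{2}$ labeling $E$ and $g$, but the actual weight of $P_p$ as a modular form is larger (here $3$), and \Cref{lem:Eisensteinp-growth} and \Cref{lem:Dirichlet} must be applied at the true weight. I would need to verify that the cuspidal part of $P_3$ indeed lies in $S_3(\Gamma_0(3),\chi_{-3})$ — this follows because both $P_3$ and the normalized Eisenstein series $L_{3,3}E_{3,\chi_1,\chi_{-3}}$ lie in $M_3(\Gamma_0(3),\chi_{-3})$ with the same constant term $1$ at $i\infty$ and the same vanishing at $0$, so their difference is a cusp form. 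Given \eqref{eqn:S3vanish}, the argument closes. An alternative that sidesteps the Eisenstein-series analysis entirely is to simply observe that both sides of the claimed identity lie in $M_3(\Gamma_0(3),\chi_{-3})$ (the right-hand side via \Cref{lem:Ekchipsi}), that this space has dimension $1$ (which is equivalent to \eqref{eqn:S3vanish} together with the known Eisenstein dimension), and that the two sides agree in enough low-order Fourier coefficients by \Cref{lem:valence}; this is the most robust route and is the one I would actually write up.
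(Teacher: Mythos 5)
Your overall strategy is the same as the paper's: the paper derives this lemma in one line from the decomposition \eqref{eqn:QpPpEisenstein}, the vanishing \eqref{eqn:S3vanish}, and \Cref{lem:Dirichlet}, exactly the ingredients you use, and your explicit evaluation $L(-2,\chi_{-3})=-\tfrac29$, hence $2L_{3,3}=-9$, is correct. Your fallback via \Cref{lem:valence} (both sides in $M_3(\Gamma_0(3),\chi_{-3})$, compare finitely many coefficients) would also work.

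However, there is one concrete error that generates all of your ``convention matching'' worries: the function $\frac{\eta(z)^9}{\eta(3z)^3}$ is $Q_3=\frac{\eta(z)^{p^2}}{\eta(pz)^p}$ with $p=3$, not $P_3=\frac{\eta(z)^3}{\eta(3z)}$ (which is the weight-$1$ form whose expansion is quoted in the introduction). Once you label it correctly, the bookkeeping is clean with no mismatch to reconcile: the weight of $Q_3$ is $\frac{p^2-p}{2}=3=pk_p$, the relevant instance of \eqref{eqn:QpPpEisenstein} is $Q_3=L_{pk_p,p}E_{pk_p,\chi_1,\chi_{-3}}+f_3$ with $f_3\in S_{pk_p}(\Gamma_0(3),\chi_{-3})=S_3(\Gamma_0(3),\chi_{-3})=\{0\}$ by \eqref{eqn:S3vanish}, and \Cref{lem:Eisensteinp-growth} and \Cref{lem:Dirichlet} are applied at weight $k=3$ as stated. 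The paragraph in which you try to explain why ``the cuspidal part of $P_3$ genuinely sits in $S_3(\Gamma_0(3),\chi_{-3})$ after matching conventions'' should be deleted and replaced by the correct identification of the function as $Q_3$; as written it is muddled and would not survive scrutiny, even though the computation you ultimately perform (weight $3$, $E_{3,\chi_1,\chi_{-3}}$, constant $-9$, and $\left(\frac{-3}{d}\right)=\left(\frac d3\right)$) is the right one.
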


We next prove Corollary \ref{cor:conj99} (1).  Let $s_{5}(n):=\sgn(b_{5}(n))$ and $s_6(n):=\sgn(C_{1^55^{-1}}(n))$.
\begin{proof}[Proof of Corollary \ref{cor:conj99}]
	(1) Since  $f_3=0$ in \eqref{eqn:QpPpEisenstein}
by \eqref{eqn:S3vanish}, \eqref{eqn:QpFouriersplit} implies that \eqref{eqn:sgnaQp} holds for $n\in\N$, and hence \eqref{eqn:sgnaQpEisen} gives
	\[
		s_5(n)=\sgn\left(c_{Q_3}(n)\right)=\sgn\left(A_\text{E}(n)\right)=\left(\frac{2}{3}\right)\left(\frac{m}{3}\right)=-\left(\frac{m}{3}\right).
	\]

	\noindent (2) Since $S_{2}(\Gamma_0(5),\chi_5)=\{0\}$ (see \cite{LMFDB}), we similarly conclude from \eqref{eqn:PpFouriersplit} that \eqref{eqn:sgnaPp} holds for $n\in\N$ and thus \eqref{eqn:sgnaPpEisen} yields that $s_6(n)=-(\frac{m}{5})$.
\end{proof}

\subsection{Proof of Theorem \ref{thm:ClassNum}}
The first step in proving Theorem \ref{thm:ClassNum} is to write our functions in terms of class number generating functions. We have the following.
\begin{lem}\label{lem:etaHrel}
 	We have
 	\begin{equation*}
 		\frac{\eta(8z)^2\eta(16z)^2}{\eta(24z)} = (\mathcal H_{4,3} - \mathcal H_{1,3}) \mid S_{24,1}(z) - \frac12 (\mathcal H_{4,3} - \mathcal H_{1,3}) \mid S_{24,17}(z) - (\mathcal H_{4,3} +2\mathcal H_{1,3}) \mid S_{24,9}(z).
 	\end{equation*}
\end{lem}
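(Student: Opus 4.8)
The plan is to run the standard strategy for an identity between holomorphic modular forms: exhibit both sides of the claimed equality as elements of a single finite-dimensional space $M_{\frac32}(\Gamma_0(N),\chi)$, and then invoke the valence formula (Lemma~\ref{lem:valence}) to reduce the identity to a finite computation of Fourier coefficients.

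First I would pin down the left-hand side. The eta-quotient $\frac{\eta(8z)^2\eta(16z)^2}{\eta(24z)}$ has weight $\frac12(2+2-1)=\frac32$, its divisors $\delta\in\{8,16,24\}$ satisfy $\sum_{\delta}\delta r_\delta=16+32-24=24\equiv0\pmod{24}$, and choosing $N=576$ one checks both $\sum_{\delta}\frac N\delta r_\delta\equiv0\pmod{24}$ and holomorphy at every cusp of $\Gamma_0(576)$. By the criterion for (half-integral weight) eta-quotients in \cite[Theorem 1.64]{OnoBook} (using also \cite[Propositions 1.41 and 3.7]{OnoBook}) this gives $\frac{\eta(8z)^2\eta(16z)^2}{\eta(24z)}\in M_{\frac32}(\Gamma_0(576),\chi)$, where one computes the associated quadratic character to be $\chi=\chi_{12}$. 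Correctly identifying this character (rather than a nearby one such as $\chi_{-24}$) and verifying holomorphy at all cusps are the points that require genuine care here. I also note that $\frac{\eta(8z)^2\eta(16z)^2}{\eta(24z)}=\sum_{n\ge0}C_{1^{2}2^{2}3^{-1}}(n)q^{8n+1}$, so this is precisely the form whose coefficients enter Theorem~\ref{thm:ClassNum}.

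Next I would place the right-hand side in the same space. By Lemma~\ref{lem:Hslash} (applied with $(\ell_1,\ell_2)=(4,3)$ and $(1,3)$, both admissible since $\gcd(\ell_1,\ell_2)=1$ and $\ell_2=3$ is squarefree), $\mathcal{H}_{4,3}\in M_{\frac32}(\Gamma_0(24),\chi_{48})$ and $\mathcal{H}_{1,3}\in M_{\frac32}(\Gamma_0(12),\chi_{12})$; since $48/12$ is a perfect square we have $\chi_{48}=\chi_{12}$ as characters, so both $\mathcal{H}_{4,3}-\mathcal{H}_{1,3}$ and $\mathcal{H}_{4,3}+2\mathcal{H}_{1,3}$ lie in $M_{\frac32}(\Gamma_0(24),\chi_{12})$. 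Applying the sieving operator $S_{24,m}$ for $m\in\{1,9,17\}$ to each of these is legitimate because $M=24$ satisfies $M\mid24$ and $M\not\equiv2\pmod{4}$, so by Lemma~\ref{lem:operatorshalf}(2) each sieved piece lands in $M_{\frac32}(\Gamma_0(\lcm(24,24^2,24\cdot12)),\chi_{12})=M_{\frac32}(\Gamma_0(576),\chi_{12})$. Hence the whole right-hand side lies in $M_{\frac32}(\Gamma_0(576),\chi_{12})$. (The fact that the particular operator $U_{\ell_1\ell_2}-\ell_2U_{\ell_1}\circ V_{\ell_2}$ kills the non-holomorphic part of Zagier's completion $\widehat{\mathcal H}$, so that $\mathcal{H}_{\ell_1,\ell_2}$ is genuinely a holomorphic modular form, is already built into Lemma~\ref{lem:Hslash}.)

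Finally, with both sides in $M_{\frac32}(\Gamma_0(576),\chi_{12})$, Lemma~\ref{lem:valence} reduces the identity to checking that the two Fourier expansions agree for all $0\le n\le 576\cdot\frac{3}{24}\cdot\left(1+\frac12\right)\left(1+\frac13\right)=144$. This is a finite computation carried out by computer: the left-hand coefficients are read off from the $q$-expansion of the eta-product, and the right-hand coefficients are assembled from Hurwitz class numbers $H(D)$ (needed for $D$ up to roughly $12\cdot144$) via the definitions of $\mathcal H$, $U_\ell$, $V_\ell$, and $S_{M,m}$ from Section~\ref{sec:prelim}. I expect the only real obstacle to be the half-integral-weight bookkeeping in the first two steps---matching levels, and especially confirming that the quadratic character of the eta-quotient is exactly the $\chi_{12}$ produced by the $\mathcal{H}_{\ell_1,\ell_2}$'s---after which the coefficient verification is entirely routine.
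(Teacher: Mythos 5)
Your proposal is correct and follows essentially the same strategy as the paper: place both sides in a common space of level $576$ with character $\chi_{12}$ (using \Cref{lem:Hslash} and \Cref{lem:operatorshalf}(2) for the right-hand side exactly as you do), then invoke \Cref{lem:valence} and finish by computer. The one difference is how the left-hand side is handled: rather than establishing directly that the half-integral-weight eta-quotient $\frac{\eta(8z)^2\eta(16z)^2}{\eta(24z)}$ lies in $M_{\frac32}(\Gamma_0(576),\chi_{12})$ (which requires the cusp-order and multiplier bookkeeping you flag as the delicate point), the paper multiplies the claimed identity through by $\eta(24z)\in M_{\frac12}(\Gamma_0(576),\chi_{12})$, so that the left-hand side becomes the genuine eta-product $\eta(8z)^2\eta(16z)^2\in M_2(\Gamma_0(64))$ and the whole comparison takes place in $M_2(\Gamma_0(576))$ with trivial character; the price is a Sturm bound of $192$ coefficients instead of your $144$, which is immaterial for the computer check. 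Both routes are valid, and yours is marginally leaner if you carry out the cusp verification you describe.
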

\begin{proof}
	By \Cref{lem:Hslash}, $\mathcal H_{1,3}\in M_\frac32(\Gamma_0(12),\chi_{12})$ and $\mathcal H_{4,3}\in M_\frac32(\Gamma_0(24),\chi_{12})$. By \Cref{lem:operatorshalf} (2),  the right-hand side of the lemma lies in $M_\frac32(\Gamma_0(576),\chi_{12})$. Moreover, since $\eta(24z)\in M_\frac12(\Gamma_0(576),\chi_{12})$ (see \cite[Corollary 1.62]{OnoBook}), $\eta(24z)$ times the right-hand side is in $M_2(\Gamma_0(576))$. Next, by \cite[Theorem 1.64]{OnoBook}, $\eta(8z)^2\eta(16z)^2\in M_2(\Gamma_0(64))$. Thus $\eta(24z)$ times the difference of the left- and right-hand sides lies in $M_2(\Gamma_0(576))$. By \Cref{lem:valence}, we have to check $192$ Fourier coefficients. The claim was verified computationally by checking the first $192$ Fourier coefficients.
\end{proof}
\begin{proof}[Proof of Theorem \ref{thm:ClassNum}]
	Let $b_{7}(n):=C_{1^{2}2^{2}3^{-1}}(n)$ and $s_{7}(n):=\sgn(b_{7}(n))$. Let
 	\begin{equation*}
 		\frac{\eta(8z)^2\eta(16z)^2}{\eta(24z)} =: \sum_{n\ge0} C(n) q^n.
 	\end{equation*}
	Then $b_{7}(n)=C(8n+1)$. Thus
	\begin{equation*}
	 	s_{7}(n) = \sgn(C(8n+1)).
	\end{equation*}
	By \eqref{eqn:Hslashdef}, we have
	\begin{equation*}
	 	\mathcal H_{\ell_1,\ell_2}(z) = \sum_{n\ge0} \left(H(\ell_1\ell_2n) - \ell_2H\left(\frac{\ell_1n}{\ell_2}\right)\right) q^n.
	\end{equation*}
	Hence \Cref{lem:etaHrel} implies that, for $n=3\ell$, we have 
	\begin{equation*}
	 	b_{7}(3\ell) = H(12(24\ell+1))-H(3(24\ell+1)).
	\end{equation*}
	We write $3(24\ell+1) = Df^2$ and $12(24\ell+1) = D(2f)^2$ with $-D$ a fundamental discriminant and $f\in\N$. Then \eqref{E:Hform} implies that
	\begin{equation*}
		H(3(24\ell+1))=H(D)S_D(f), \quad H(12(24\ell+1)) = H(D) S_D(2f).
	\end{equation*}
	Since $S_D(f)$ is multiplicative and $f$ is odd, we have
	\[
		S_{D}(2f)-S_D(f)= S_D(f)S_D(2)-S_D(f)=S_D(f)\left(\sigma(2)-\left(\frac{-D}2\right)-1\right)>0.
	\]
	So $s_7(3\ell)=1=(\frac{8\cdot n +1}{3})$, and we are done in this case. Similarly $b_{7}(3\ell+2)=C(24\ell+17)$. The same argument gives $s_7(3\ell+2)=-1=(\frac{8n+1}{3})$ for $n=3\ell+2$. This finishes the case $a=0$, as $3\mid(8n+1)$ iff $n\equiv 1\pmod{3}$, where we write $8n+1=3^am$ with $3\nmid m$ as in the theorem.

We hence next assume that $n=3\ell+1$. In this case, \Cref{lem:etaHrel} implies that
\[
		b_{7}(3\ell+1) = C(24\ell+9)= -H(36(8\ell+3))+ 3H\left(4(8\ell+3)\right) - 2H(9(8\ell+3)) + 6H(8\ell+3).
\]
Write $3^{a-1}m=8\ell+3=Df^2$ with $-D$ fundamental and $f\in\N$. Using \eqref{E:Hform} gives 
	\begin{equation}\label{eqn:C24n+9}
	 	C(24\ell+9) = -H(D) (S_D(6f) - 3S_D(2f) + 2S_D(3f) - 6S_D(f)).
	\end{equation}
	Note that $2\nmid f$. Since $D$ is a fundamental discriminant, we have $9\nmid D$, so $a\geq 3$ if and only if $3\mid f$. Moreover, $9\mid (8\ell+3)$ if and only if $3\mid f$. Therefore
	\begin{equation*}
	 	a\geq 3\Leftrightarrow 3\mid f \Leftrightarrow 8\ell+3 \equiv 0 \pmod9
	 	\Leftrightarrow \ell \equiv 3 \pmod9.
	\end{equation*}
	
We next assume that $a\in\{1,2\}$, and hence $\ell\not\equiv3\pmod9$. Since $S_D(f)$ is multiplicative, using \eqref{eqn:SDfdef} and \eqref{E:Hform}, we have, by \eqref{eqn:C24n+9},
	\begin{align*}
	 	C(24\ell+9) &= -H(D) S_D(f) (S_D(6) - 3S_D(2) + 2S_D(3) - 6)\\
	 	&= -H(8\ell+3)\bigg(\sigma(6) - \left(\frac{-D}3\right) \sigma(2) - \left(\frac{-D}2\right) \sigma(3) + \left(\frac{-D}6\right) - 3\sigma(2)\\
	 	&\hspace{6cm} + 3\left(\frac{-D}2\right) + 2\sigma(3) - 2\left(\frac{-D}3\right) - 6\bigg).
	\end{align*}
	Note that $(\frac{-D}2)=(\frac{-3}2)=-1$ since $D\equiv3\pmod8$ and  $(\frac{-D}3)=(\frac{-8\ell}3)=(\frac n3)$. Thus
	\begin{equation*}
	 	C(24\ell+9) = -6H(8\ell+3) \left(1-\left(\frac{\ell}{3}\right)\right).
	\end{equation*}
	This proves the claim for $a\in\{1,2\}$.

Finally, suppose that $a\geq 3$, which is equivalent to $n\equiv3\pmod9$. We write $f=3^rg$ with $3\nmid g$. By \eqref{eqn:C24n+9} and the multiplicativity of $S_D(f)$ (splitting off the $3$-powers), we have
	\begin{equation*}
	 	C(24\ell+9) = -H(D) S_D(g) \left(S_D\left(2\cdot3^{r+1}\right) - 3S_D\left(2\cdot3^r\right) + 2S_D\left(3^{r+1}\right) - 6S_D\left(3^r\right)\right).
	\end{equation*}
	We compute
	\begin{equation*}
	 	S_D\left(3^{\nu}\right) = \sigma\left(3^{\nu}\right) - \left(\frac{-D}3\right) \sigma\left(3^{\nu-1}\right), \quad S_D(2) = \sigma(2) - \left(\frac{-D}2\right) = 3 - (-1) = 4.
	\end{equation*}
	Simplifying gives
	\begin{equation*}
		-H(D) S_D(g) \left(6S_D\left(3^{r+1}\right) - 18S_D\left(3^r\right)\right) = -6H(D) S_D(g) \left(1-\left(\frac{-D}3\right)\right).
	\end{equation*}
Noting that the right-hand side is independent of $r$, the result follows.
\end{proof}




\begin{thebibliography}{99}
\bibitem{AndrewsBorwein} G. Andrews, \begin{it}On a conjecture of Peter Borwein\end{it}, J. Symb. Comput. \textbf{20} (1995), 487--501.
\bibitem{AndrewsLewis}G. Andrews and R. Lewis, \begin{it}The ranks and cranks of partitions moduli $2$, $3$, and $4$,\end{it} J. Number Theory \textbf{85} (2000), 74--84.
\bibitem{Apostol}T. Apostol, \begin{it}Introduction to analytic number theory\end{it}, Springer-Verlag, 1976.
\bibitem{Borwein}P. Borwein, \begin{it}Some restricted partition functions\end{it}, J. Number Theory \textbf{45} (1993), 228--240.
\bibitem{BosmaKane}W. Bosma and B. Kane, \begin{it}The triangular theorem of eight and representation by quadratic polynomials\end{it}, Proc. Amer. Math. Soc. \textbf{141} (2013), 1473--1486.
\bibitem{BGHK-WeakSigns}K. Bringmann, G. Han, B. Heim, and B. Kane, \begin{it}Periodic sign changes for weakly holomorphic eta-quotients\end{it}, Int. J. Number Theory \textbf{21} (2025), 2111--2139.
\bibitem{BK} K. Bringmann and B. Kane, {\it Class numbers and representations by ternary quadratic forms with congruence conditions}, Mathematics of Computation \textbf{91} (2022), 295--329.
\bibitem{BruinierKohnen} J. Bruinier and W. Kohnen, \begin{it}Sign changes of coefficients of half integral weight modular forms\end{it}, In: Modular forms on Schiermonnikoog (eds. B. Edixhoven et. al.), 57–66, Cambridge Univ. Press, 2008.
\bibitem{CG} R. Chen and G. Garvan,  \begin{it}Congruences modulo $4$ for weight $3/2$ eta-products\end{it}, Bull. Aust. Math. Soc. \textbf{103} (3) (2021), 405–417.
\bibitem{Cohen}H. Cohen, \begin{it}Sums involving the values at negative integers of $L$-functions of quadratic characters\end{it}, Math. Ann. \textbf{217} (1975), 271--285.
\bibitem{CGR06} S. Cooper, S. Gun, and B. Ramakrishnan, \begin{it} On the lacunarity of two-eta-products\end{it}, Georgian Math. Journal \textbf{13} (4), (2006), 659--673.
\bibitem{Deligne} P. Deligne, \begin{it}La conjecture de Weil I,\end{it} Inst. Hautes \'Etudes Sci. Publ. Math. \textbf{43} (1974), 273--307.
\bibitem{DiamondShurman} F. Diamond and J. Shurman, \begin{it}A first course in modular forms\end{it}, Graduate Texts in Math. \textbf{228}, Springer, 2005.
\bibitem{HZ} F. Hirzebruch and D. Zagier, \begin{it}Intersection numbers of curves on Hilbert modular surfaces and modular forms of Nebentypus\end{it}, Invent. Math. \textbf{36} (1976), 57--113.
\bibitem{KaneResolution}D. Kane, \begin{it}Resolution of a conjecture of Andrews and Lewis involving cranks of partitions\end{it}, Proc. Amer. Math. Soc. \textbf{132} (2004), 2247--2256.
\bibitem{KnoppKohnenPribitkin}M. Knopp, W. Kohnen, and W. Pribitkin, \begin{it}On the sign of Fourier coefficients of cusp forms\end{it}, Ramanujan J. \textbf{7} (2003), 269--277.
\bibitem{Koehlerbook} G. K\"ohler, \begin{it}Eta products and theta series identities \end{it} Springer Monographs in Mathematics, Springer, 2011.


\bibitem{KLSW10} E. Kowalski, Y.-K Lau, K. Soundararajan, and J. Wu, \begin{it}On modular signs\end{it}, Math. Proc. Camb. Phil. Soc. \textbf{149} (2010), 389--411.

\bibitem{LMFDB} The {LMFDB collaboration}, \begin{it}The $L$-functions and modular forms database\end{it}, \url{https://www.lmfdb.org}, online, accessed 26 July, 2024.
\bibitem{KLW} W. Kohnen, Y. Lau, and J. Wu, \begin{it}Fourier coefficients of cusp forms of half-integral weight\end{it}, Math. Z. \textbf{273} (2013), 29--41.

\bibitem{Liouville2}J. Liouville, \begin{it}Sur la forme $x^2+y^2+2(z^2+t^2)$\end{it}, J. Math. Pures Appl. \textbf{5} (1860), 269--272.

\bibitem{Liouville}J. Liouville, \begin{it}Nouveaux th\'eor\`emes concernant les nombres triangulaires\end{it}, J. Math. Pures Appl. \textbf{8} (1863), 73--84.



\bibitem{OnoBook}K. Ono, \begin{it}The web of modularity: arithmetic of the coefficients of modular forms and $q$-series\end{it}, Regional Conference Series in mathematics, Amer. Math. Soc. \textbf{102}, 2004.

	\bibitem{RosserSchoenfeld} J. Rosser and L. Schoenfeld, \begin{it}Approximate formulas for some functions of prime numbers\end{it} Ill. J. Math. \textbf{6} (1962), 64--94.

	\bibitem{Z} M. Schlosser and N. Zhou, {\it On the infinite Borwein product raised to a positive real power}, Ramanujan J. \textbf{61} (2023), 515--543.
\bibitem{SY}R. Schulze-Pillot and A. Yenirce, \begin{it}Petersson products of bases of spaces of cusp forms and estimates for Fourier coefficients\end{it}, Int. J. Number Theory \textbf{14} (2018), 2277--2290.

\bibitem{Zagier}D. Zagier, \begin{it}Nombres de classes et formes modulaires de poids $3/2$\end{it}, C.R. Acad. Sci. Paris (A) \textbf{281} (1975), 883--886.
\bibitem{ZagierMellin}D. Zagier, \begin{it}The Mellin transform and other useful analytic techniques\end{it}, Appendix to E. Zeidler, Quantum Field Theory I: Basics in mathematics and physics. A bridge between mathematicians and physicists, Springer-Verlag, Berlin-Heidelberg-New York (2006), 305--323.
\end{thebibliography}
\end{document}